\newcommand{\supp}{\mathrm{supp}}
\newcommand{\lf}{\left\lfloor}
\newcommand{\rf}{\right\rfloor}
\newcommand{\lc}{\left\lceil}
\newcommand{\rc}{\right\rceil}
\newcommand{\sign}{\mathrm{sign}}
\newtheorem{theorem}{Theorem}[section]
\newtheorem{lemma}[theorem]{Lemma}
\newtheorem{proposition}[theorem]{Proposition}
\newtheorem{corollary}[theorem]{Corollary}
\theoremstyle{definition}
\newtheorem{definition}[theorem]{Definition}
\theoremstyle{remark}
\numberwithin{equation}{section}
\title{Nikishin systems on star-like sets: algebraic properties and weak asymptotics of the associated multiple orthogonal polynomials}
\date{\today}
\author[1]{Abey L\'{o}pez-Garc\'{i}a}
\author[2]{Erwin Mi\~{n}a-D\'{i}az}
\affil[1]{University of South Alabama, Department of Mathematics and Statistics, ILB 325, 411 University
Blvd North, Mobile AL, 36688.
E-mail address: lopezgarcia@southalabama.edu}
\affil[2]{The University of Mississippi, Department of Mathematics, Hume Hall 305,
University, MS 38677.
E-mail address: minadiaz@olemiss.edu}
\begin{document}

\maketitle

\abstract{Polynomials $Q_n(z)$, $n=0,1,\ldots,$ that are multi-orthogonal with respect to  a Nikishin system of $p\geq 1 $ compactly supported measures over  the star-like set of $p+1$ rays $S_+:=\{z\in \mathbb{C}:  z^{p+1}\geq 0 \}$ are investigated.  We prove that the Nikishin system is normal, that the polynomials satisfy a three-term recurrence relation of order $p+1$ of the form $z Q_{n}(z)=Q_{n+1}(z)+a_{n}\,Q_{n-p}(z)$ with $a_n>0$ for all $n\geq p$, and that the nonzero roots of $Q_n$ are all simple and located in $S_+$. Under the assumption of regularity (in the sense of Stahl and Totik) of the measures generating the Nikishin system, we describe the asymptotic zero distribution and weak behavior of the polynomials $Q_n$ in terms of a vector equilibrium problem for logarithmic potentials. Under the same regularity assumptions, a theorem on the convergence of the Hermite-Pad\'{e} approximants to the Nikishin system of Cauchy transforms is proven.}

\section{Introduction}
This work is motivated by the studies \cite{AptKalIseg,AptKalSaff,DelLop} on sequences of polynomials $\{Q_n\}_{n=0}^\infty$ satisfying a recurrence relation of the form
\begin{equation}\label{generalrecurrence}
z Q_{n}(z)=Q_{n+1}(z)+a_{n}\,Q_{n-p}(z),\quad a_{n}>0,\ n\geq p,
\end{equation}
where $p$ is a fixed positive integer.

Some well-known families of polynomials satisfy this type of recurrence relation with the coefficients $a_n$ all being equal to some constant $a$. For instance, when $p=1$ and $a_n=1$ for all $n\geq 1$, the polynomials $Q_n$ resulting from the pairs of initial conditions $Q_0(z)=2$, $Q_1(z)=z$, and  $Q_0(z)=1$, $Q_1(z)=z$, are, respectively, the Chebyshev polynomials of the first and second kind for the interval $[-2,2]$. As a way of generalizing the Chebyshev polynomials of the first kind, one can  set in \eqref{generalrecurrence} $a_n=1/p$, $n\geq p$, and $Q_0(z)=p+1$, $Q_\ell=z^\ell$, $\ell=1,\ldots,p$, which generates the sequence of Faber polynomials associated with a hypocycloid of $p+1$ cusps. Many interesting properties of these Faber polynomials were established in \cite{HeSaff}. For instance, their zeros are all located in the star-like set of $p+1$ rays
\[
S_{+}:=\{z\in\mathbb{C}: z^{p+1}\geq 0\},
\]
more precisely, they are contained, interlace, and form a dense subset of  $\{z\in S_+:|z|<(p+1)/p\}$.

It was proven in \cite{AptKalIseg} that with the initial conditions
\begin{equation}\label{initialconditions}
Q_\ell(z)=z^\ell,\quad 0\leq \ell\leq p,
\end{equation}
the polynomials generated by \eqref{generalrecurrence} are in fact multi-orthogonal (in the same non-Hermitian sense of Definition \ref{multiorthogonalitydef} below) with respect to a system of $p$ complex measures $\mu_1,\ldots,\mu_{p}$ supported on $S_+$. These measures can be viewed as spectral measures \cite{AptKalIseg,AptKalSaff} of the difference operator given in the standard basis of the Hilbert space $l^2(\mathbb{N})$ by the infinite $(p+2)$-banded Hessenberg matrix
\begin{equation}\label{operator}
\left(
\begin{array}{ccccccc}
0 &1&0&0&0&\ldots &\ldots\\
0 &0&1&0&0&\ldots &\ldots\\
0 &0&0&1&0&\ldots &\ldots\\
\ldots &\ldots&\ldots&\ldots&\ldots&\ldots &\ldots\\
a_p &0&0&0&0&\ldots &\ldots\\
0 &a_{p+1}&0&0&0&\ldots &\ldots\\
0 &0 & a_{p+2}&0&0&\ldots &\ldots\\
\ldots &\ldots&\ldots&\ldots&\ldots&\ldots &\ldots
\end{array}
\right).
\end{equation}
Their Cauchy transforms
\[
\int_{S_+}\frac{d\mu_j(t)}{z-t},\quad 1\leq j\leq p,
\]
are the resolvents or Weyl functions of the operator. We remark that the spectral measures are of the form $d\mu_{j}(t)=t^{1-j} d\nu_{j}(t^{p+1})$, $j=1,\ldots,p,$ where $\nu_{j}$ is a positive measure supported on $S_{+}^{p+1}=\mathbb{R}_{+}$. Hence $\mu_{1}$ is rotationally invariant, and the rest are rotationally invariant up to a monomial factor.

The $l^1$ perturbation of the constant coefficient case
\begin{equation}\label{perturbationcond}
\sum_{n=p}^\infty|a_n-a|<\infty, \quad a>0,
\end{equation}
was investigated in \cite{AptKalSaff}. Here, the strong asymptotics of the polynomials $Q_n$ determined by \eqref{generalrecurrence}, \eqref{initialconditions}, and \eqref{perturbationcond}, as well as properties of the  measures $\mu_j$, were derived. For instance, it was proven that these spectral measures are absolutely continuous, and a formal connection of these measures with a Nikishin-type system was obtained. In \cite{DelLop}, this connection was explicitly established in the case of periodic recurrence coefficients (see Section 2.4 of \cite{DelLop}), and many algebraic and asymptotic properties of the Riemann-Hilbert minors associated with the polynomials $Q_{n}$ satisfying \eqref{generalrecurrence}--\eqref{initialconditions} were given.

Motivated by these results, we investigate in this paper polynomials $Q_n$ that are multi-orthogonal with respect to a Nikishin system of $p$ measures (defined in analogy to the classical sense) supported over the star-like set $S_+$. As we will see in Section \ref{sectionrecurrence} below, such $Q_n$'s happen to satisfy the recurrence relation \eqref{generalrecurrence}. Our goal is to understand how the properties of the measures generating the Nikishin system affect the multi-orthogonal polynomials $Q_n$ and the recurrence coefficients $a_n$, in particular, what their asymptotic behavior is as $n\to \infty$. Thus, in the context of inverse spectral problems, our investigation sheds some light into the properties of the operator \eqref{operator}.

Nikishin systems of functions (the Cauchy transforms of a Nikishin system of measures  on intervals of the real line) were first introduced in \cite{Nik} as the first wide class of functions possessing convergent  Hermite-Pad\'{e} approximants. While in his original paper \cite{Nik} Nikishin proved this convergence only for a system of two measures and diagonal multi-indices, great progress has been made since then for any number of intervals and arbitrary multi-indices (see, for instance, \cite{prietolagomasino}).
Our work can also be viewed within the context of rational approximation as a generalization of Nikishin systems from real intervals to star-like sets.

The content of the paper is organized in five sections. Sections  \ref{sectionnikishinsystems} and \ref{sectionrecurrence} are, for the most part, of an algebraic nature, and they have been linearly structured so as to have any result needed for a given topic stated and proven beforehand. The Nikishin system and other related hierarchies of measures, together with the multi-orthogonal polynomials and their associated  functions of the second kind, are introduced in Section  \ref{sectionnikishinsystems}. Among the many relations and properties proven in that section figure the normality of the Nikishin system and the location of the zeros of the multi-orthogonal polynomials and of the functions of the second kind. In Section \ref{sectionrecurrence}, we prove the recurrence relation \eqref{generalrecurrence} for the multi-orthogonal polynomials, including the (nontrivial) positivity of the recurrence coefficients. In Section 5, we describe the asymptotic zero distribution and weak behavior of the polynomials $Q_n$ in terms of a vector equilibrium problem for logarithmic potentials, under the assumption that the measures generating the Nikishin system are regular in the sense of Stahl and Totik. A weak convergence theorem for the coefficients of the recurrence relation is also obtained. Finally, in Section \ref{sectionhermitepade}, and under the same regularity assumptions, a theorem on the convergence of the Hermite-Pad\'{e} approximants to the Nikishin system of Cauchy transforms is proven.

Many of the results in this paper were already obtained in \cite{LopGar} for Nikishin systems of $p=2$ measures on a star-like set of three rays. The case when $p \geq 3$ is technically much more difficult with many subtleties that do not appear when $p=2$.
 \section{Nikishin systems on stars}\label{sectionnikishinsystems}
\subsection{Definition and basic properties of the Nikishin system}\label{subsectionnikishinsystems}
Let $p\geq 1$ be an integer, and let
\[
S_{\pm}:=\{z\in\mathbb{C}: z^{p+1}\in\mathbb{R}_{\pm}\},\qquad \mathbb{R}_{+}=[0,+\infty),\quad \mathbb{R}_{-}=(-\infty,0].
\]
Then
\[
S_-=e^{\frac{\pi i}{p+1}}S_+.
\]

We construct $p$ finite stars contained in $S_{\pm}$ as follows:
\begin{align*}
\Gamma_{j} & :=\{z\in\mathbb{C}: z^{p+1}\in[a_{j},b_{j}]\},\qquad \quad 0\leq
j\leq p-1,
\end{align*}
where
\[
0\leq a_{j}<b_{j}<\infty,\quad j\equiv 0\mod 2,
\]
\[
-\infty< a_{j}<b_{j}\leq 0, \quad j\equiv 1\mod 2,
\]
so that $\Gamma_{j}\subset S_{+}$ if $j$ is even, and $\Gamma_{j}\subset S_{-}$
if $j$ is odd. We assume throughout that $\Gamma_{j}\cap
\Gamma_{j+1}=\emptyset$ for all $0\leq j\leq p-2$, that is,
any two consecutive stars do not meet at the origin.

We define now a Nikishin system on $(\Gamma_{0},\ldots,\Gamma_{p-1})$. For each
$0\leq j\leq p-1$,  let $\sigma_{j}$ denote a positive, rotationally invariant (over the angle $2\pi/(p+1)$) measure on $\Gamma_{j}$, with infinitely many points in its support. These will
be the measures generating the Nikishin system.

Let
\[
\widehat{\mu}(x):=\int\frac{d\mu(t)}{x-t}
\]
denote the Cauchy transform of a complex measure $\mu$, and let
$\mu_1,\ldots,\mu_N$ be $N\geq 1$ measures such that $\mu_j$ and $\mu_{j+1}$
have disjoint supports for every $1\leq j\leq N-1$.
We define the measure $\langle \mu_{1},\ldots,\mu_{N}\rangle$ by the following
recursive procedure.
For $N=1$, $\langle \mu_{1}\rangle:= \mu_{1}$, for $N=2$,
\[
d\langle \mu_{1},\mu_{2}\rangle (x):= \widehat{\mu}_{2}(x)\,d\mu_{1}(x),
\]
and for $N>2$,
\[
\langle \mu_{1},\ldots,\mu_{N}\rangle := \langle \mu_{1},\langle
\mu_{2},\ldots,\mu_{N}\rangle\rangle.
\]

We then define the Nikishin system
$(s_{0},\ldots,s_{p-1})=\mathcal{N}(\sigma_0,\ldots,\sigma_{p-1})$ generated by
the vector of
$p$ measures $(\sigma_0,\ldots,\sigma_{p-1})$ by setting
\begin{equation}\label{def:sj}
s_{j}:=\langle \sigma_{0},\ldots, \sigma_{j}\rangle, \qquad 0\leq j\leq p-1.
\end{equation}
Notice that these measures $s_{j}$ are supported on the first star $\Gamma_{0}$.

It is convenient, however, to think of this Nikishin system as the first row of
the following hierarchy  of measures $s_{k,j}$,
\begin{align}\label{hiers}
 \begin{array}{ccccc}
  s_{0,0} & s_{0,1} & s_{0,2} & \cdots & s_{0,p-1}\\
  & s_{1,1} & s_{1,2} & \cdots & s_{1,p-1}\\
& & s_{2,2} & \cdots & s_{2,p-1}\\
 &  & & \ddots & \vdots\\
  &  & &  & s_{p-1,p-1}
 \end{array}
\end{align}
where
\begin{equation}\label{def:skj}
 s_{k,j}=\langle \sigma_k,\ldots,\sigma_j\rangle,\quad 0\leq k\leq j\leq p-1.
\end{equation}
More descriptively, the measures $s_{k,j}$ are inductively defined by setting
\begin{equation}\label{hiermeasskj}
\begin{split}
s_{k,k}:={}&\sigma_k, \quad 0\leq k\leq p-1,\\
d s_{k,j}(z)={}&\int_{\Gamma_{k+1}} \frac{ds_{k+1,j}(t)}{z-t}d\sigma_k(z),\quad
0\leq k < j\leq p-1.
\end{split}
\end{equation}

Notice then that for each pair $k$, $j$ with $0\leq k\leq j\leq p-1$,
$(s_{k,k},\ldots,s_{k,j})=\mathcal{N}(\sigma_k,\ldots,\sigma_j)$ is the
Nikishin system generated by
$(\sigma_k,\ldots,\sigma_j)$.

Throughout the paper we will use the notation
\[
\omega:=e^{\frac{2\pi i}{p+1}}.
\]
The following proposition summarizes several basic properties that will be
needed later.

\begin{proposition}\label{proposition1} For every $0\leq k\leq j\leq p-1$, the
measure $s_{k,j}$ satisfies the symmetry property
\begin{align}\label{rotinvarh}
ds_{k,j}(\omega z)=\omega^{k-j}ds_{k,j}(z).
\end{align}
Also, for every integrable $f$ on $\Gamma_k$, we have
\begin{equation}\label{symm:intF:1}
\int_{\Gamma_{k}}f(\omega z)\,d s_{k,j}(z)=\omega^{j-k}\,\int_{\Gamma_k}f(z)\,d
s_{k,j}(z),
\end{equation}
\begin{align}\label{eq14}
\int_{\Gamma_k}\overline{f(\overline{z})}ds_{k,j}(z)={} &
\overline{\int_{\Gamma_k}f(z)ds_{k,j}(z)}.
\end{align}
\end{proposition}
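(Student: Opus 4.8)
The plan is to prove the three identities together by induction on $j-k$ with $j$ fixed, descending from the diagonal $k=j$, where $s_{j,j}=\sigma_j$ so that each statement reduces to a property of the generating measure. I would first note that \eqref{rotinvarh} and \eqref{symm:intF:1} are two forms of the same fact: each $\Gamma_k$ is invariant under $z\mapsto\omega z$ because $(\omega z)^{p+1}=z^{p+1}$, so substituting $w=\omega z$ in $\int_{\Gamma_k}f(\omega z)\,ds_{k,j}(z)$ and inserting \eqref{rotinvarh} (in the form $ds_{k,j}(\omega^{-1}w)=\omega^{j-k}ds_{k,j}(w)$) turns it into $\omega^{j-k}\int_{\Gamma_k}f(w)\,ds_{k,j}(w)$, which is \eqref{symm:intF:1}; running the computation for all test functions $f$ recovers \eqref{rotinvarh}. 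Thus it suffices to establish \eqref{rotinvarh}, whose base case $k=j$ is precisely the assumed rotational invariance $d\sigma_j(\omega z)=d\sigma_j(z)$.

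For the inductive step, assume \eqref{rotinvarh} for $s_{k+1,j}$. The key computation is the scaling of the Cauchy transform under $z\mapsto\omega z$: writing $\widehat{s}_{k+1,j}(\omega z)=\int_{\Gamma_{k+1}}ds_{k+1,j}(t)/(\omega z-t)$, substituting $t=\omega u$ (legitimate since $\Gamma_{k+1}$ is $\omega$-invariant), and using both the induction hypothesis $ds_{k+1,j}(\omega u)=\omega^{(k+1)-j}ds_{k+1,j}(u)$ and $\omega z-\omega u=\omega(z-u)$, I get $\widehat{s}_{k+1,j}(\omega z)=\omega^{k-j}\widehat{s}_{k+1,j}(z)$. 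Since \eqref{hiermeasskj} gives $ds_{k,j}(z)=\widehat{s}_{k+1,j}(z)\,d\sigma_k(z)$, replacing $z$ by $\omega z$ and invoking $d\sigma_k(\omega z)=d\sigma_k(z)$ produces $ds_{k,j}(\omega z)=\omega^{k-j}ds_{k,j}(z)$.

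The conjugation identity \eqref{eq14} runs along the same inductive skeleton. Its base case $k=j$ is the statement that $\sigma_k$ is invariant under $z\mapsto\bar z$ (equivalently, since $\sigma_k$ is positive, $\int\overline{f(\bar z)}\,d\sigma_k=\overline{\int f\,d\sigma_k}$). I would derive this from rotational invariance: the map $P(z)=z^{p+1}$ carries $\Gamma_k$ onto a real interval and satisfies $P\circ C=\overline{P}=P$ on $\Gamma_k$ (where $C(z)=\bar z$), so $\sigma_k$ and its conjugate reflection $C_*\sigma_k$ have the same pushforward under $P$; as both measures are rotationally invariant and such a measure is determined by its $P$-pushforward (its mass must be split equally among the $p+1$ preimages), they coincide. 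For the inductive step I would first record the reflection symmetry $\widehat{s}_{k+1,j}(\bar z)=\overline{\widehat{s}_{k+1,j}(z)}$, obtained by applying \eqref{eq14} for $s_{k+1,j}$ to $f(t)=1/(z-t)$. Setting $g=f\cdot\widehat{s}_{k+1,j}$, the conjugate of this reflection gives $\overline{g(\bar z)}=\overline{f(\bar z)}\,\widehat{s}_{k+1,j}(z)$, so that \eqref{eq14} for $s_{k,j}$ becomes the already-proven base case applied to $g$ against $\sigma_k$.

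I expect the one genuinely non-routine point to be this base case of \eqref{eq14}: manufacturing conjugation symmetry of $\sigma_k$ out of rotational invariance alone. The pushforward/uniqueness argument is the natural device, and it is essential precisely for the odd-index stars $\Gamma_k\subset S_-$, which contain no conjugation-fixed ray when $p$ is odd and hence resist any attempt to reduce to a real-axis situation. Beyond that, the whole proposition is disciplined bookkeeping of the two changes of variables $t\mapsto\omega t$ and $t\mapsto\bar t$ on the $\omega$-invariant, conjugation-symmetric sets $\Gamma_k$, with the two inductions following the recursion \eqref{hiermeasskj} line by line.
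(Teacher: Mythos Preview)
Your proof is correct and follows essentially the same route as the paper: reverse induction on $k$ along the recursion \eqref{hiermeasskj}, with \eqref{symm:intF:1} derived as an immediate reformulation of \eqref{rotinvarh}. The only notable difference is that you actually justify the base case of \eqref{eq14}---that rotational invariance of $\sigma_k$ forces conjugation invariance---via the pushforward argument under $z\mapsto z^{p+1}$, whereas the paper simply asserts ``the rotational invariance of the measures $\sigma_k$ implies that $d\sigma_k(t)=d\sigma_k(\overline{t})$'' without further comment; your treatment of this point is more careful than the paper's.
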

\begin{proof} The relation \eqref{rotinvarh} holds
trivially for $k=p-1$, and for $0\leq k<p-1$ it is proven by reverse induction on $k$. Formula \eqref{symm:intF:1} follows immediately from \eqref{rotinvarh}. The relation \eqref{eq14} is also proven 
by reverse induction on $k$, using \eqref{hiermeasskj} and that, due to its rotational invariance, 
$d\sigma_{k}(t)=d\sigma_k(\overline{t})$.\end{proof}

For every $0\leq j\leq p-1$, we shall denote by $\sigma^*_j$ the push-forward of $\sigma_j$ under the map $z\mapsto z^{p+1}$, that is,
 $\sigma^*_j$ is the measure on $[a_j,b_j]$ such that for every Borel set $E\subset [a_j,b_j]$,
\begin{equation}\label{def:sigma:star}
\sigma^*_j(E):=\sigma_j\left(\{z:z^{p+1}\in E\}\right).
\end{equation}

We now construct, out of these $\sigma_j^*$, a new hierarchy of measures
$\mu_{k,j}$, $0\leq k\leq j\leq p-1$:
\begin{align}\label{hiermu}
 \begin{array}{ccccc}
  \mu_{0,0} & \mu_{0,1} & \mu_{0,2} & \cdots & \mu_{0,p-1}\\
  & \mu_{1,1} & \mu_{1,2} & \cdots & \mu_{1,p-1}\\
& & \mu_{2,2} & \cdots & \mu_{2,p-1}\\
 &  & & \ddots & \vdots\\
  &  & &  & \mu_{p-1,p-1}
 \end{array}
\end{align}
where  the measures $\mu_{k,j}$ are inductively defined by setting
\begin{align}\label{def:mukj}
\begin{split}
\mu_{k,k}:={}&\sigma^*_k, \quad 0\leq k\leq p-1,\\
d\mu_{k,j}(\tau)={}&\left(\tau\int_{a_{k+1}}^{b_{k+1}}\frac{d\mu_{k+1,j}(s)}{\tau-s}
\right)d\sigma^*_{k}(\tau),\quad \tau \in [a_k,b_k],\quad 0\leq k < j\leq p-1.
\end{split}
\end{align}

In the following result we describe the relationship between the measures $\mu_{k,j}$ and $s_{k,j}$.

\begin{proposition} \label{proposition2} For every $0\leq k\leq j\leq p-1$, we
have
\begin{align*}
\int_{\Gamma_{k}}\frac{ds_{k,j}(t)}{z-t}={}&
z^{p+k-j}\int_{a_{k}}^{b_{k}}\frac{d\mu_{k,j}(\tau)}{z^{p+1}-\tau},
\end{align*}
that is,
\begin{align}\label{reductiontosegments}
\widehat{s}_{k,j}(z)=z^{p+k-j}\widehat{\mu}_{k,j}(z^{p+1}).
\end{align}
Hence, for every continuous function $f$ on $[a_k,b_k]$,
\begin{align}\label{changefromstartosegment}
\int_{a_k}^{b_k}f(\tau)d\mu_{k,j}(\tau)={}&
\int_{\Gamma_{k}}f(z^{p+1})z^{j-k}ds_{k,j}(z).
\end{align}
\end{proposition}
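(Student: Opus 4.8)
The plan is to establish the Cauchy-transform identity \eqref{reductiontosegments} first, by reverse induction on $k$, and then to obtain \eqref{changefromstartosegment} as an immediate corollary. The computational engine is the elementary partial-fraction decomposition
\[
\frac{z^{\ell}}{z^{p+1}-t^{p+1}}=\frac{1}{p+1}\sum_{m=0}^{p}\frac{\omega^{m(\ell-p)}\,t^{\ell-p}}{z-\omega^{m}t},\qquad 0\leq \ell\leq p,
\]
whose validity rests on the fact that the left-hand side is a proper rational function of $z$ with simple poles at the $(p+1)$-st roots $\{\omega^{m}t\}_{m=0}^{p}$ and residue $(\omega^{m}t)^{\ell}/[(p+1)(\omega^{m}t)^{p}]$ there. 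The second ingredient is a collapsing mechanism: after integrating such a decomposition against a measure that is rotationally invariant up to a fixed power of $\omega$, the change of variable $t\mapsto\omega^{-m}t$ (which preserves $\Gamma_{k}$) turns every denominator $z-\omega^{m}t$ into $z-t$, and the accumulated powers of $\omega$ cancel, so that all $p+1$ summands become equal and the prefactor $1/(p+1)$ is absorbed.

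For \eqref{reductiontosegments}, the base case $k=j$ reads $s_{k,k}=\sigma_{k}$, $\mu_{k,k}=\sigma^{*}_{k}$. Here I would use the push-forward definition \eqref{def:sigma:star} to rewrite $\widehat{\mu}_{k,k}(z^{p+1})=\int_{\Gamma_{k}}(z^{p+1}-t^{p+1})^{-1}\,d\sigma_{k}(t)$, multiply by $z^{p}$, and apply the decomposition with $\ell=p$; the plain rotational invariance $d\sigma_{k}(\omega t)=d\sigma_{k}(t)$ then collapses the sum to $\int_{\Gamma_{k}}(z-t)^{-1}\,d\sigma_{k}(t)=\widehat{s}_{k,k}(z)$. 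For the inductive step I assume \eqref{reductiontosegments} at level $k+1$ and invoke the recursive definitions $ds_{k,j}=\widehat{s}_{k+1,j}\,d\sigma_{k}$ and $d\mu_{k,j}(\tau)=\tau\,\widehat{\mu}_{k+1,j}(\tau)\,d\sigma^{*}_{k}(\tau)$. Writing $h(t):=\widehat{\mu}_{k+1,j}(t^{p+1})$ (which is rotationally invariant since $\omega^{p+1}=1$), substituting the hypothesis $\widehat{s}_{k+1,j}(t)=t^{\,p+k+1-j}h(t)$ into $\widehat{s}_{k,j}(z)=\int_{\Gamma_{k}}(z-t)^{-1}\widehat{s}_{k+1,j}(t)\,d\sigma_{k}(t)$, and expanding $z^{\,p+k-j}\widehat{\mu}_{k,j}(z^{p+1})$ through the push-forward, reduces the claim to
\[
\int_{\Gamma_{k}}\frac{t^{\,p+k+1-j}\,h(t)}{z-t}\,d\sigma_{k}(t)=z^{\,p+k-j}\int_{\Gamma_{k}}\frac{t^{\,p+1}\,h(t)}{z^{p+1}-t^{p+1}}\,d\sigma_{k}(t).
\]
Applying the decomposition with $\ell=p+k-j$ (which lies in $[1,p-1]$ precisely because $k<j\leq p-1$) and the substitution $t\mapsto\omega^{-m}t$ makes both sides equal, since the residual factors $\omega^{m(k-j)}$ and $\omega^{-m(p+1+k-j)}$ multiply to $\omega^{-m(p+1)}=1$.

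Finally, \eqref{changefromstartosegment} requires no new induction. The case $k=j$ is the push-forward definition itself. For $k<j$ I would expand the left-hand side via $d\mu_{k,j}(\tau)=\tau\,\widehat{\mu}_{k+1,j}(\tau)\,d\sigma^{*}_{k}(\tau)$ and the push-forward to get $\int_{\Gamma_{k}}f(t^{p+1})\,t^{p+1}\widehat{\mu}_{k+1,j}(t^{p+1})\,d\sigma_{k}(t)$, then replace $t^{p+1}\widehat{\mu}_{k+1,j}(t^{p+1})$ by $t^{\,j-k}\widehat{s}_{k+1,j}(t)$ using \eqref{reductiontosegments} at level $k+1$; since $ds_{k,j}=\widehat{s}_{k+1,j}\,d\sigma_{k}$, this is exactly the right-hand side $\int_{\Gamma_{k}}f(t^{p+1})\,t^{\,j-k}\,ds_{k,j}(t)$. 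I expect the main obstacle to be purely the bookkeeping: tracking the powers of $\omega$ so that they cancel, and checking that the exponent $\ell=p+k-j$ never leaves the admissible range $0\leq\ell\leq p$ where the partial-fraction decomposition is proper. The symmetry relation \eqref{rotinvarh} of Proposition \ref{proposition1} is precisely what guarantees those cancellations, so that is where rotational invariance is genuinely used.
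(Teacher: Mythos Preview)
Your proof is correct, but the route differs from the paper's in an interesting way. The paper decomposes in the opposite direction: it writes
\[
\frac{1}{z-t}=\sum_{l=0}^{p}\frac{z^{p-l}\,t^{l}}{z^{p+1}-t^{p+1}},
\]
integrates against $ds_{k,j}$, and invokes the symmetry $ds_{k,j}(\omega t)=\omega^{k-j}\,ds_{k,j}(t)$ from Proposition~\ref{proposition1} to show that every term with $l\neq j-k$ vanishes. This produces, in one stroke and \emph{before} any induction, the identity
\[
\widehat{s}_{k,j}(z)=z^{p+k-j}\int_{\Gamma_{k}}\frac{t^{j-k}\,ds_{k,j}(t)}{z^{p+1}-t^{p+1}},
\]
valid for all $k\leq j$. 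The induction hypothesis then enters only to convert $t^{j-k}\widehat{s}_{k+1,j}(t)$ into $t^{p+1}\widehat{\mu}_{k+1,j}(t^{p+1})$ so that the push-forward to $[a_{k},b_{k}]$ can be applied.

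Your argument instead decomposes $z^{\ell}/(z^{p+1}-t^{p+1})$ into simple poles and collapses the resulting sum via the change of variable $t\mapsto\omega^{-m}t$, using only the rotational invariance of $\sigma_{k}$ (and the trivial $(p{+}1)$-periodicity of $h$). The two routes are dual: the paper filters by symmetry of the composite measure $s_{k,j}$, you average by symmetry of the generating measure $\sigma_{k}$. The paper's version is marginally more economical---a single preparatory identity covers both the base case and the inductive step---while yours keeps the argument self-contained, never appealing to the full symmetry \eqref{rotinvarh} of $s_{k,j}$. The derivation of \eqref{changefromstartosegment} is essentially identical in both.
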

\begin{proof}
Using \eqref{rotinvarh} we find that $\int_{\Gamma_k}\frac{t^lds_{k,j}(t)}{z-t^{p+1}}=0$ when $l \not\equiv j-k\mod
(p+1)$. Hence, it follows that
\begin{align}\label{eq:skj}
\int_{\Gamma_k}\frac{ds_{k,j}(t)}{z-t}=z^{p+k-j}\int_{\Gamma_k}\frac{t^{j-k}ds_{k,j}(t)}{z^{p+1}-t^{p+1}}.
\end{align}
This already proves (\ref{reductiontosegments}) for $k=j$. If (\ref{reductiontosegments}) holds for some value of $k\in\{0,\ldots,j\}$, then using \eqref{eq:skj}, \eqref{def:mukj}, and \eqref{def:sigma:star}
yields that (\ref{reductiontosegments}) is also true for $k-1$. The proof of \eqref{changefromstartosegment} also follows by backward induction on $k$.\end{proof}

\subsection{Multiple orthogonal polynomials and functions of the second kind}\label{subsectionmultiorthogonal}

\begin{definition}\label{multiorthogonalitydef}
Let $\{Q_{n}(z)\}_{n=0}^\infty$ be the sequence of monic polynomials of lowest
degree that satisfy the following non-hermitian orthogonality conditions:
\begin{equation}\label{orthog:Qn}
\int_{\Gamma_{0}} Q_{n}(z)\,z^{l}\,d s_{j}(z)=0,\qquad l=0,\ldots,\left\lfloor
\frac{n-j-1}{p}\right\rfloor,\qquad 0\leq j\leq p-1.
\end{equation}
\end{definition}

In what follows we will use the notation
\[
d_{n}:=\deg(Q_{n}), \quad n\geq 0.
\]
Using \eqref{symm:intF:1}-\eqref{eq14}, one easily sees that the polynomials
$Q_{n}(z)$, $Q_{n}(\omega z)$ and $\overline{Q_{n}(\overline{z})}$ satisfy the
same orthogonality relations (\ref{orthog:Qn}).
Thus, by the uniqueness of $Q_{n}$, we have that
\begin{equation}\label{symm:Qn}
Q_{n}(\omega z)=\omega^{d_{n}}\,Q_{n}(z),\qquad
Q_{n}(z)=\overline{Q_{n}(\overline{z})}, \quad n\geq 0.
\end{equation}

Let $0\leq \ell\leq p$ be such that $d_n\equiv \ell \mod (p+1)$, so that
\[
d_n= d(p+1)+\ell, \quad d:=\left\lfloor \frac{d_n}{p+1}\right\rfloor.
\]
Then, the first relation in \eqref{symm:Qn} implies that
\begin{equation}\label{Qd}
Q_n(t)=t^\ell \mathcal{Q}_d(t^{p+1})
\end{equation}
for some polynomial $ \mathcal{Q}_d$ of exact degree $d$.

We note that in this paper we use the following standard notations for a real number $x$:
\begin{align*}
\lfloor x \rfloor & =\sup\{m\in\mathbb{Z}: m\leq x\},\\
\lceil x \rceil & =\inf\{m\in\mathbb{Z}: m\geq x\}.
\end{align*}

The polynomials $Q_n$ are intrinsically related to the so-called functions of
the second kind, which we define next.

\begin{definition}\label{definitionPsi}
Set $\Psi_{n,0}=Q_{n}$ and let
\[
\Psi_{n,k}(z)=\int_{\Gamma_{k-1}}\frac{\Psi_{n,k-1}(t)}{z-t}\,d\sigma_{k-1}(t),
\qquad k=1,\ldots,p.
\]
\end{definition}

Observe that $\Psi_{n,k}$ is analytic in $\overline{\mathbb{C}}\setminus
\Gamma_{k-1}$.
Our next proposition shows that the function $\Psi_{n,k}$
satisfies multiple orthogonality conditions similar to those satisfied by $Q_n$
but with respect to the  Nikishin system given by the $k$th row of the
hierarchy \eqref{hiers}. Note that the function $\Psi_{n,p}$ is excluded from this proposition.

\begin{proposition}\label{orthoPsi}
For each $k=0,\ldots,p-1$, the function $\Psi_{n,k}$ satisfies the following
orthogonality conditions
\begin{equation}\label{orthog:Psink}
\int_{\Gamma_{k}} \Psi_{n,k}(z)\,z^{l}\,ds_{k,j} (z)=0,\qquad 0\leq l\leq \lf
\frac{n-j-1}{p} \rf,\qquad k\leq j\leq p-1.
\end{equation}
\end{proposition}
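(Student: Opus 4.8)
The plan is to establish \eqref{orthog:Psink} by induction on $k$, running forward from $k=0$ up to $k=p-1$. The base case $k=0$ requires no work: since $\Psi_{n,0}=Q_{n}$ and $s_{0,j}=s_{j}$, the conditions \eqref{orthog:Psink} reduce verbatim to the defining orthogonality relations \eqref{orthog:Qn} of $Q_{n}$. For the inductive step I assume \eqref{orthog:Psink} at level $k-1$ (for every $j$ with $k-1\le j\le p-1$ and every $l$ in the stated range), and I derive it at level $k$. Throughout I will use the two measures that make the recursion close, namely $\sigma_{k-1}=s_{k-1,k-1}$ and the hierarchy identity \eqref{hiermeasskj}, which for $j\ge k$ reads $\widehat{s}_{k,j}(t)\,d\sigma_{k-1}(t)=ds_{k-1,j}(t)$.

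Fix $j$ with $k\le j\le p-1$ and $l$ with $0\le l\le \lfloor (n-j-1)/p\rfloor$. Writing $x$ for the variable on $\Gamma_{k}$ and $t$ for the variable on $\Gamma_{k-1}$, I would insert the definition of $\Psi_{n,k}$ from Definition \ref{definitionPsi} into the target integral and interchange the order of integration, which is legitimate because $\Gamma_{k-1}$ and $\Gamma_{k}$ are disjoint compact sets (consecutive stars do not meet), so the kernel $1/(x-t)$ is bounded there and all measures are finite. This yields
\begin{equation*}
\int_{\Gamma_{k}}\Psi_{n,k}(x)\,x^{l}\,ds_{k,j}(x)=\int_{\Gamma_{k-1}}\Psi_{n,k-1}(t)\left(\int_{\Gamma_{k}}\frac{x^{l}}{x-t}\,ds_{k,j}(x)\right)d\sigma_{k-1}(t).
\end{equation*}
The decisive algebraic move is the polynomial division $x^{l}=(x^{l}-t^{l})+t^{l}$, which splits the inner integral into a polynomial-in-$t$ part $\sum_{i=0}^{l-1}t^{i}\int_{\Gamma_{k}}x^{\,l-1-i}\,ds_{k,j}(x)$ of degree at most $l-1$, plus the term $t^{l}\int_{\Gamma_{k}}ds_{k,j}(x)/(x-t)=-\,t^{l}\,\widehat{s}_{k,j}(t)$.

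For the second (Cauchy-transform) part I would apply $\widehat{s}_{k,j}(t)\,d\sigma_{k-1}(t)=ds_{k-1,j}(t)$ to rewrite its contribution as $-\int_{\Gamma_{k-1}}\Psi_{n,k-1}(t)\,t^{l}\,ds_{k-1,j}(t)$, which vanishes by the inductive hypothesis at level $k-1$ with the same $j$ (note $j\ge k>k-1$) and the same bound on $l$. For the first (polynomial) part I would use $\sigma_{k-1}=s_{k-1,k-1}$ to recognize each summand as a constant times $\int_{\Gamma_{k-1}}\Psi_{n,k-1}(t)\,t^{i}\,ds_{k-1,k-1}(t)$ with $0\le i\le l-1$, which vanishes by the inductive hypothesis at level $k-1$ in the particular case $j=k-1$. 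Once both parts are shown to vanish, the target integral is zero and the induction closes.

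The one point that genuinely demands care is the bookkeeping of the floor-function index ranges, since the whole argument succeeds only because the relevant degrees fall inside the admissible orthogonality windows. The polynomial part requires $i\le\lfloor (n-k)/p\rfloor$ for every $i\le l-1$, which I would verify from $l\le\lfloor (n-j-1)/p\rfloor\le\lfloor (n-k-1)/p\rfloor$ (using $j\ge k$), whence $l-1\le\lfloor (n-k-1)/p\rfloor-1\le\lfloor (n-k)/p\rfloor$; the Cauchy-transform part needs only the \emph{same} bound on $l$ already assumed. No serious analytic obstacle arises: $\Psi_{n,k-1}$ is a polynomial when $k=1$ and, for $k\ge 2$, is analytic off $\Gamma_{k-2}$ and hence bounded on the disjoint set $\Gamma_{k-1}$, so Fubini and all the manipulations above are routine.
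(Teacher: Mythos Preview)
Your proof is correct and follows essentially the same approach as the paper: forward induction on $k$, insertion of the defining integral for $\Psi_{n,k}$, Fubini, the algebraic splitting $x^{l}=(x^{l}-t^{l})+t^{l}$, and the hierarchy identity $\widehat{s}_{k,j}\,d\sigma_{k-1}=ds_{k-1,j}$ to reduce both resulting terms to the inductive hypothesis (with $j=k-1$ for the polynomial part and the same $j$ for the Cauchy-transform part). The only differences from the paper are cosmetic---you step from $k-1$ to $k$ rather than from $k$ to $k+1$, and you spell out the index bookkeeping for the floor bounds more explicitly than the paper does.
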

\begin{proof}
The proof follows by induction on $k$, based on the equality 
\begin{align*}
 \int_{\Gamma_{k+1}} \Psi_{n,k+1}(z)\,z^{l}\,ds_{k+1,j}(z)=\int_{\Gamma_{k}} \Psi_{n,k}(t)\,p_{l}(t)\,d\sigma_{k}(t)-\int_{\Gamma_{k}}
\Psi_{n,k}(t)\,t^{l}\,ds_{k,j} (t),
\end{align*}
where $p_{l}$ denotes the polynomial $p_{l}(t)=\int_{\Gamma_{k+1}}\frac{z^{l}-t^{l}}{z-t}\,ds_{k+1,j} (z)$.
\end{proof}

\begin{proposition}
The functions $\Psi_{n,k}$ satisfy the symmetry property
\begin{equation}\label{symm:Psink}
\Psi_{n,k}(\omega z)=\omega^{d_n-k}\,\Psi_{n,k}(z),\qquad k=0,\ldots,p, \quad
n\geq 0,
\end{equation}
where, as above, $d_n$ is the degree of $Q_n$.
\end{proposition}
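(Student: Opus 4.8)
The plan is to prove \eqref{symm:Psink} by induction on $k$, exploiting the recursive definition in Definition \ref{definitionPsi} together with the rotational invariance of each generating measure $\sigma_k$. The structure mirrors the reverse-induction argument already used for \eqref{rotinvarh} in Proposition \ref{proposition1}, so the computation should be short.

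For the base case $k=0$, we have $\Psi_{n,0}=Q_n$, and the first relation in \eqref{symm:Qn} gives $Q_n(\omega z)=\omega^{d_n}Q_n(z)$, which is exactly \eqref{symm:Psink} with $k=0$. For the inductive step, I would assume \eqref{symm:Psink} holds for some $k$ with $0\leq k\leq p-1$ and compute $\Psi_{n,k+1}(\omega z)$ directly. First note that $\Gamma_k$ is invariant under the rotation $z\mapsto\omega z$, since $(\omega z)^{p+1}=\omega^{p+1}z^{p+1}=z^{p+1}$; this guarantees that a substitution $t=\omega s$ leaves the domain of integration unchanged. Starting from
\[
\Psi_{n,k+1}(\omega z)=\int_{\Gamma_{k}}\frac{\Psi_{n,k}(t)}{\omega z-t}\,d\sigma_{k}(t),
\]
I would apply this substitution. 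By the rotational invariance of $\sigma_k$ (equivalently $d\sigma_k(\omega s)=d\sigma_k(s)$), the measure is preserved, so the integral becomes $\int_{\Gamma_{k}}\frac{\Psi_{n,k}(\omega s)}{\omega z-\omega s}\,d\sigma_{k}(s)$. Invoking the induction hypothesis $\Psi_{n,k}(\omega s)=\omega^{d_n-k}\Psi_{n,k}(s)$ and factoring $\omega^{-1}$ out of the denominator yields
\[
\Psi_{n,k+1}(\omega z)=\omega^{d_n-k-1}\int_{\Gamma_{k}}\frac{\Psi_{n,k}(s)}{z-s}\,d\sigma_{k}(s)=\omega^{d_n-(k+1)}\,\Psi_{n,k+1}(z),
\]
which is \eqref{symm:Psink} at level $k+1$ and completes the induction.

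I do not expect a genuine obstacle here. The only point requiring care is the change of variables $t\mapsto\omega s$: one must verify both that $\Gamma_k$ is rotation-invariant, so that the contour of integration is preserved, and that the rotational invariance of $\sigma_k$ leaves $d\sigma_k$ unchanged under this substitution. Once these two facts are in place, the identity for $\Psi_{n,k+1}$ reduces to pulling the scalar factors $\omega^{d_n-k}$ and $\omega^{-1}$ outside the integral, which is purely algebraic.
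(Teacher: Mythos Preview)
Your proposal is correct and follows essentially the same approach as the paper: induction on $k$, base case from \eqref{symm:Qn}, and the inductive step via the substitution $t\mapsto\omega t$ using the rotational invariance of $\sigma_k$. The paper's proof is just a one-line version of your computation.
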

\begin{proof}
The proof is again by induction on $k$. The case $k=0$ is the already proved
symmetry property \eqref{symm:Qn} for the polynomials $Q_{n}$. Assuming that
\eqref{symm:Psink} holds for $k$, then
\[
\Psi_{n,k+1}(\omega z)=\int_{\Gamma_{k}}\frac{\Psi_{n,k}(t)}{\omega
z-t}\,d\sigma_{k}(t)=\int_{\Gamma_{k}}\frac{\Psi_{n,k}(\omega t)}{\omega
z-\omega t}\,d\sigma_{k}(t)=\omega^{d_n-k-1}\,\Psi_{n,k+1}(z).
\]\end{proof}

We now seek to find  an analogue of the polynomial $\mathcal{Q}_d$ in
\eqref{Qd} for the functions $\Psi_{n,k}$. To accomplish that, we first need the
following representation.
\begin{proposition}
Assume that $d_n \equiv \ell \mod (p+1)$ with $0\leq \ell\leq p$. Then, for
each $k=1,\ldots, p$ we have
\begin{equation}\label{altintrep:Psink}
\Psi_{n,k}(z)=
z^{p-s}\,\int_{\Gamma_{k-1}}\frac{\Psi_{n,k-1}(t)\,t^{s}}{z^{p+1}-t^{p+1}}\,
d\sigma_{k-1}(t),
\end{equation}
where $s$ is the only integer in $\{0,\ldots,p\}$ such that $s\equiv
k-1-\ell\mod (p+1)$, that is,
\begin{equation}\label{param:s}
s=\begin{cases}
k-1-\ell, & \ell< k,\\
p+k-\ell, & k \leq \ell.
\end{cases}
\end{equation}
\end{proposition}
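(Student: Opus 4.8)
The plan is to start from the definition of $\Psi_{n,k}$ and rationalize the Cauchy kernel exactly as in the proof of Proposition~\ref{proposition2}. Using the telescoping identity
\[
\frac{1}{z-t}=\frac{z^{p}+z^{p-1}t+\cdots+t^{p}}{z^{p+1}-t^{p+1}}=\sum_{l=0}^{p}\frac{z^{p-l}\,t^{l}}{z^{p+1}-t^{p+1}},
\]
I would expand the defining integral for $\Psi_{n,k}$ into the finite sum
\[
\Psi_{n,k}(z)=\sum_{l=0}^{p}z^{p-l}\int_{\Gamma_{k-1}}\frac{\Psi_{n,k-1}(t)\,t^{l}}{z^{p+1}-t^{p+1}}\,d\sigma_{k-1}(t)=:\sum_{l=0}^{p}z^{p-l}\,I_{l}.
\]
The target formula \eqref{altintrep:Psink} then amounts to showing that all terms in this sum vanish except the single one indexed by $l=s$.

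The key step is the vanishing of $I_{l}$ for the "wrong" residues, and here I would reuse the rotational-symmetry mechanism already in place. Since $\Gamma_{k-1}$ is invariant under multiplication by $\omega$ and $\sigma_{k-1}$ is rotationally invariant (so that $d\sigma_{k-1}(\omega t)=d\sigma_{k-1}(t)$), the substitution $t\mapsto\omega t$ leaves each integral $I_{l}$ unchanged. Carrying it out and invoking $\omega^{p+1}=1$ (whence $(\omega t)^{p+1}=t^{p+1}$) together with the previously established symmetry \eqref{symm:Psink} in the form $\Psi_{n,k-1}(\omega t)=\omega^{d_{n}-(k-1)}\Psi_{n,k-1}(t)$, the integrand acquires an overall factor $\omega^{d_{n}-(k-1)+l}$. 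This yields $I_{l}=\omega^{d_{n}-(k-1)+l}\,I_{l}$, forcing $I_{l}=0$ unless $\omega^{d_{n}-(k-1)+l}=1$, i.e.\ unless $l\equiv (k-1)-d_{n}\equiv (k-1)-\ell\pmod{p+1}$.

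To finish, I would note that as $l$ ranges over $\{0,\ldots,p\}$ exactly one value satisfies this congruence, namely $l=s$ where $s$ is the residue of $(k-1)-\ell$ in $\{0,\ldots,p\}$; checking the two cases of \eqref{param:s} is then a one-line verification ($0\le k-1-\ell\le p$ when $\ell<k$, and $p+k-\ell\in\{1,\ldots,p\}$ after adding $p+1$ when $\ell\ge k$). Retaining only the surviving term $z^{p-s}I_{s}$ gives \eqref{altintrep:Psink}. I do not anticipate a genuine obstacle here: the argument is essentially a symmetry computation, and the only points demanding care are the bookkeeping of the exponent $d_{n}-(k-1)+l$ in the congruence and the (harmless) measure-theoretic justification of the $t\mapsto\omega t$ change of variables, which is identical to the one underlying Proposition~\ref{proposition2}.
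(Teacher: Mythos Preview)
Your proposal is correct and follows essentially the same approach as the paper: both expand the Cauchy kernel via $\frac{1}{z-t}=\sum_{l=0}^{p}\frac{z^{p-l}t^{l}}{z^{p+1}-t^{p+1}}$, then use the rotational symmetry \eqref{symm:Psink} together with the invariance of $\sigma_{k-1}$ to show that each term $I_{l}$ satisfies $I_{l}=\omega^{d_{n}-k+1+l}I_{l}$, forcing all but the term with $l=s$ to vanish. Your bookkeeping of the congruence and the identification of $s$ via \eqref{param:s} are exactly as in the paper.
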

\begin{proof}
Let $1\leq k\leq p$. From Definition \ref{definitionPsi}  we deduce that
\[
\Psi_{n,k}(z)=\sum_{l=0}^{p}z^{p-l}\int_{\Gamma_{k-1}}\frac{\Psi_{n,k-1}(t)\,t^{l}}{z^{p+1
}-t^{p+1}}\,d\sigma_{k-1}(t).
\]
By \eqref{symm:Psink}, the integral of the $l$th-term of this sum equals $0$ if $d_n-k+1+l \not\equiv 0 \mod (p+1)$, which proves  
\eqref{altintrep:Psink}.
\end{proof}

\begin{definition}\label{definitionpsi}
Set $\psi_{n,0}:=\mathcal{Q}_d$, and for $1\leq k\leq p$, let
$\psi_{n,k}$ be the function analytic in $\mathbb{C}\setminus
[a_{k-1},b_{k-1}]$ defined as
\[
\psi_{n,k}(z)=\begin{cases}
z\int_{\Gamma_{k-1}}\frac{\Psi_{n,k-1}(t)\,t^{k-1-\ell}}{z-t^{p+1}}\,d\sigma_{
k-1}(t), & \ell< k,\\[1em]
\int_{\Gamma_{k-1}}\frac{\Psi_{n,k-1}(t)\,t^{p+k-\ell}}{z-t^{p+1}}\,d\sigma_{k-1
}(t), & k\leq \ell.
\end{cases}
\]
\end{definition}

This definition is what one naturally gets by
substituting the expressions in \eqref{param:s} for $s$ in
\eqref{altintrep:Psink}, and doing so also yields at once the following
corollary.
\begin{corollary}\label{coroPsi} Suppose $d_n
\equiv \ell \mod (p+1)$ with $0\leq \ell\leq p$, and define
\begin{equation}\label{varymeas:sigma}
d\sigma_{n,k}(\tau):=\begin{cases}
d\sigma_{k}^{*}(\tau), & \ell\leq k,\\
\tau\,d\sigma_{k}^{*}(\tau), & k<\ell.
\end{cases}
\end{equation}Then,
\begin{equation}\label{modifiedPsink}
z^{k-\ell}\,\Psi_{n,k}(z)=\psi_{n,k}(z^{p+1}), \quad 0\leq k\leq p,
\end{equation}
and for all $1\leq k\leq p$,
\begin{equation}
\psi_{n,k}(z)=\begin{cases}\label{recurrenceforpsipequena}
z\int_{a_{k-1}}^{b_{k-1}}\frac{\psi_{n,k-1}(\tau)}{z-\tau}\,d\sigma_{n,k-1}
(\tau), & \ell< k,\\[1em]
\int_{a_{k-1}}^{b_{k-1}}\frac{\psi_{n,k-1}(\tau)}{z-\tau}\,d\sigma_{n,k-1}(\tau), & k\leq \ell.
\end{cases}
\end{equation}
\end{corollary}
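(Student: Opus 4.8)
The plan is to derive both assertions purely by substitution, exploiting the explicit values of the exponent $s$ recorded in \eqref{param:s}; no new analytic estimate is required, only careful bookkeeping of powers of $z$ (or $t$) together with a push-forward change of variables under $z\mapsto z^{p+1}$. I would establish the two displayed identities in order: first the conversion formula \eqref{modifiedPsink}, and then the recurrence \eqref{recurrenceforpsipequena}, the latter using the former at the shifted index $k-1$.

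For \eqref{modifiedPsink} I would treat $k=0$ separately: there $\Psi_{n,0}=Q_n$ and $\psi_{n,0}=\mathcal{Q}_d$, so the claim $z^{-\ell}Q_n(z)=\mathcal{Q}_d(z^{p+1})$ is exactly \eqref{Qd}. For $1\leq k\leq p$ I would start from the representation \eqref{altintrep:Psink} and insert the two branches of \eqref{param:s}. When $\ell<k$ one has $s=k-1-\ell$, hence $p-s=p-k+1+\ell$, and multiplying \eqref{altintrep:Psink} by $z^{k-\ell}$ produces the prefactor $z^{p+1}$ together with the integrand $\Psi_{n,k-1}(t)\,t^{k-1-\ell}$; comparing with the first branch of Definition \ref{definitionpsi} evaluated at $z^{p+1}$ gives the identity. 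When $k\leq\ell$ one has $s=p+k-\ell$, hence $p-s=\ell-k$, and multiplying by $z^{k-\ell}$ cancels the prefactor entirely, leaving precisely the second branch of Definition \ref{definitionpsi} evaluated at $z^{p+1}$. Thus \eqref{modifiedPsink} holds for every $k$, and I note this step needs no induction, since \eqref{altintrep:Psink} already expresses $\Psi_{n,k}$ directly through $\Psi_{n,k-1}$.

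For the recurrence \eqref{recurrenceforpsipequena} I would invoke \eqref{modifiedPsink} at the index $k-1$ in the form $\Psi_{n,k-1}(t)=t^{\ell-k+1}\psi_{n,k-1}(t^{p+1})$ and substitute it into the corresponding branch of Definition \ref{definitionpsi}. In the branch $\ell<k$ the factor $t^{k-1-\ell}$ combines with $t^{\ell-k+1}$ to give $1$, so the integrand becomes $\psi_{n,k-1}(t^{p+1})/(z-t^{p+1})$; in the branch $k\leq\ell$ the factor $t^{p+k-\ell}$ combines with $t^{\ell-k+1}$ to give $t^{p+1}$, so an extra factor $\tau$ appears after the change of variables. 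In both branches I would then pass to the variable $\tau=t^{p+1}$ using the push-forward definition \eqref{def:sigma:star} of $\sigma_{k-1}^*$, turning the integral over $\Gamma_{k-1}$ into an integral over $[a_{k-1},b_{k-1}]$ against $d\sigma_{k-1}^*$, respectively against $\tau\,d\sigma_{k-1}^*$.

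The one point that requires care—and the only place where the argument could fail if the conventions were misaligned—is matching the measure obtained after the change of variables with the definition \eqref{varymeas:sigma} of $d\sigma_{n,k-1}$. Here the case thresholds must agree: the branch $\ell<k$ is equivalent to $\ell\leq k-1$, which by \eqref{varymeas:sigma} gives $d\sigma_{n,k-1}=d\sigma_{k-1}^*$, exactly the measure produced when no extra $\tau$ is present; and the branch $k\leq\ell$ is equivalent to $k-1<\ell$, which gives $d\sigma_{n,k-1}=\tau\,d\sigma_{k-1}^*$, exactly the measure produced when the extra factor $\tau$ appears. Since decreasing the index by one shifts the threshold by one in lockstep with the shift in the power of $t$ coming from \eqref{modifiedPsink}, the two case divisions line up precisely, and \eqref{recurrenceforpsipequena} follows.
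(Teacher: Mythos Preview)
Your proof is correct and follows exactly the approach indicated in the paper: the corollary is obtained by direct substitution of the values of $s$ from \eqref{param:s} into \eqref{altintrep:Psink} to get \eqref{modifiedPsink}, and then inserting \eqref{modifiedPsink} at index $k-1$ into Definition~\ref{definitionpsi} together with the push-forward change of variables to obtain \eqref{recurrenceforpsipequena}. Your careful verification that the threshold $\ell<k$ versus $k\leq\ell$ translates into $\ell\leq k-1$ versus $k-1<\ell$, matching the cases in \eqref{varymeas:sigma}, is the only nontrivial bookkeeping step, and you handle it correctly.
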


We have seen that the functions $\Psi_{n,k}$ satisfy orthogonality relations
with respect to the hierarchy \eqref{hiers}. We now show that their associated
functions $\psi_{n,k}$ do the same with respect to the hierarchy
\eqref{hiermu}.

\begin{proposition}
Let $0\leq k\leq p-1$ and assume that $d_n\equiv \ell \mod (p+1)$ with $0\leq
\ell\leq p$. Then the function $\psi_{n,k}$ satisfies the following
orthogonality conditions:
 \begin{equation}\label{orthogredPsink}
\int_{a_{k}}^{b_{k}}\psi_{n,k}(\tau)\,\tau^{s}\,d\mu_{k,j}(\tau)=0,\quad
\left\lceil\frac{\ell-j}{p+1}\right\rceil\leq s\leq \left\lfloor
\frac{n+p\ell-1-j(p+1)}{p(p+1)}\right\rfloor,\quad k\leq j\leq p-1.
\end{equation}
\end{proposition}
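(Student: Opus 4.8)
The plan is to reduce the asserted orthogonality of $\psi_{n,k}$ on the segment $[a_k,b_k]$ to the orthogonality \eqref{orthog:Psink} of $\Psi_{n,k}$ on the star $\Gamma_k$ that was established in Proposition \ref{orthoPsi}, by transporting the integral from the segment back to the star via the two bridges relating the two settings: the change-of-variable formula \eqref{changefromstartosegment} and the reduction identity \eqref{modifiedPsink}.

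First I would apply \eqref{changefromstartosegment} to the function $f(\tau)=\psi_{n,k}(\tau)\,\tau^{s}$, obtaining
\[
\int_{a_k}^{b_k}\psi_{n,k}(\tau)\,\tau^{s}\,d\mu_{k,j}(\tau)=\int_{\Gamma_k}\psi_{n,k}(z^{p+1})\,z^{(p+1)s}\,z^{j-k}\,ds_{k,j}(z).
\]
Then I would insert $\psi_{n,k}(z^{p+1})=z^{k-\ell}\,\Psi_{n,k}(z)$ from \eqref{modifiedPsink}; all the explicit powers of $z$ combine into a single monomial, and the right-hand side becomes
\[
\int_{\Gamma_k}\Psi_{n,k}(z)\,z^{(p+1)s+j-\ell}\,ds_{k,j}(z).
\]
At this point the claim is precisely \eqref{orthog:Psink} evaluated at $l=(p+1)s+j-\ell$, so it suffices to check that for every $s$ in the asserted range this exponent is a nonnegative integer satisfying $0\leq l\leq\lfloor(n-j-1)/p\rfloor$.

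The only genuinely computational part is then to convert the admissible range of $l$ into the range of $s$ stated in \eqref{orthogredPsink}. Since $s,j,\ell,p$ are integers, $l=(p+1)s+j-\ell$ is an integer, so $l\geq 0$ is equivalent to $s\geq(\ell-j)/(p+1)$, that is, $s\geq\lceil(\ell-j)/(p+1)\rceil$; and because $l$ is an integer the inequality $l\leq\lfloor(n-j-1)/p\rfloor$ is equivalent to $l\leq(n-j-1)/p$, which after multiplying by $p$ and solving for $s$ becomes $s\leq(n+p\ell-1-j(p+1))/(p(p+1))$, hence $s\leq\lfloor(n+p\ell-1-j(p+1))/(p(p+1))\rfloor$. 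These are exactly the two bounds in \eqref{orthogredPsink}.

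I do not expect any real obstacle here: the substantive content is carried entirely by the previously proven identities, and the one thing requiring care is the floor/ceiling bookkeeping, together with the elementary observation that $l$ being an integer lets one pass freely between the strict fractional inequalities and the inequalities involving their floors. It is worth recording \emph{en passant} that the admissible exponents all satisfy $l\equiv j-\ell\pmod{p+1}$, which is precisely the residue class for which $\int_{\Gamma_k}\Psi_{n,k}(z)\,z^{l}\,ds_{k,j}(z)$ is not forced to vanish by the rotational symmetry \eqref{symm:intF:1}; thus none of the orthogonality conditions is wasted, and the count is sharp.
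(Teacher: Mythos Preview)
Your proposal is correct and follows essentially the same route as the paper: both arguments use \eqref{changefromstartosegment} and \eqref{modifiedPsink} to identify $\int_{a_k}^{b_k}\psi_{n,k}(\tau)\tau^{s}\,d\mu_{k,j}(\tau)$ with $\int_{\Gamma_k}\Psi_{n,k}(z)\,z^{l}\,ds_{k,j}(z)$ for $l=(p+1)s+j-\ell$, and then invoke \eqref{orthog:Psink}. The only cosmetic difference is that the paper runs the computation in the opposite direction (starting from \eqref{orthog:Psink} and substituting $l=j-\ell+s(p+1)$), which lands it first on the upper bound $s\le\lfloor\frac{1}{p+1}\lfloor\frac{n+p\ell-1-j(p+1)}{p}\rfloor\rfloor$ and then requires the identity $\lfloor\lfloor x\rfloor/(p+1)\rfloor=\lfloor x/(p+1)\rfloor$ to simplify; your use of the integrality of $l$ to drop the inner floor before dividing by $p+1$ is a slightly cleaner way to reach the same bound.
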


\begin{proof}
Let $0\leq k\leq j\leq p-1$. From the orthogonality conditions
\eqref{orthog:Psink} and the relations \eqref{modifiedPsink} and
\eqref{changefromstartosegment} we get that
\begin{align}\label{preorthocondpsi}
\begin{split}
\int_{a_k}^{b_k}\psi_{n,k}(\tau)\tau^{\frac{\ell+l-j}{p+1}}\, d\mu_{k,j}(\tau)=0,\quad 0\leq l\leq \lf \frac{n-j-1}{p}\rf,\quad \ell+l-j \equiv 0 \mod
(p+1).
\end{split}
\end{align}
If we take $l$ in \eqref{preorthocondpsi} satisfying $\ell+l-j \equiv 0 \mod
(p+1)$, then we can write $l=j-\ell+s(p+1)$ and we obtain the orthogonality
conditions
\begin{equation}\label{mod:orthogPsink}
\int_{a_k}^{b_k}\psi_{n,k}(\tau)\tau^s\, d\mu_{k,j}(\tau)=0,\quad
\left\lceil\frac{\ell-j}{p+1}\right\rceil\leq s\leq
\left\lfloor\frac{1}{p+1}\left\lfloor
\frac{n+p\ell-1-j(p+1)}{p}\right\rfloor\right\rfloor.
\end{equation}

Since $
\left\lfloor\frac{\left\lfloor x\right\rfloor}{p+1}\right\rfloor=
\left\lfloor\frac{x}{p+1}\right\rfloor$ for all $x\in \mathbb{R}$,
the range for $s$ in
(\ref{mod:orthogPsink}) takes the form in \eqref{orthogredPsink}.
\end{proof}

\subsection{Counting the number of orthogonality conditions}\label{counting}

\begin{definition}
Let  $n$ and $\ell$ be nonnegative integers with $0\leq\ell\leq p$. For each
$0\leq j\leq p-1$, let $M_j=M_j(n,\ell)$ be the number of integers $s$
satisfying the inequalities
\begin{equation}\label{countingcond}
 \left\lceil\frac{\ell-j}{p+1}\right\rceil\leq s\leq \left\lfloor
\frac{n+p\ell-1-j(p+1)}{p(p+1)}\right\rfloor.
\end{equation}
For each $0\leq k\leq p-1$, we define
\begin{align}\label{def:Znk}
 Z(n,k)=Z(n,\ell,k):=\sum_{j=k}^{p-1}M_j.
\end{align}
Also, we convene to set $Z(n,p):=0$. 
\end{definition}

Herafter we shall always write $Z(n,k)$ instead of $Z(n,\ell,k)$ because in 
all future situations the number $\ell$ will be dependent on $n$. We easily 
deduce from \eqref{countingcond}-\eqref{def:Znk} that 
\begin{equation}\label{asympformZnk}
Z(n,k)=\frac{n(p-k)}{p(p+1)}+O(1),\qquad n\rightarrow\infty.
\end{equation}

It is also clear from the definition that for every $n$ and $\ell$,
\[
Z(n,k)\geq Z(n,k+1), \quad 0\leq k\leq p-2,
\]
and
\begin{equation}\label{differenceznk}
Z(n,k)-Z(n,k+1)=\#\left\{s:\left\lceil\frac{\ell-k}{p+1}\right\rceil\leq s\leq
\left\lfloor \frac{n+p\ell-1-k(p+1)}{p(p+1)}\right\rfloor\right\}.
\end{equation}
Hence, choosing $j=k$ in \eqref{orthogredPsink}, and noticing that
\begin{equation}\label{lowerbound}
 \left\lceil\frac{\ell-j}{p+1}\right\rceil=\begin{cases}
  0, & \mathrm{if}\ \ell\leq j,\\
1, &\mathrm{if} \ j<\ell,
 \end{cases}
\end{equation}
we arrive at the following corollary.
\begin{corollary}
Let $0\leq k\leq p-1$ and assume that $d_n\equiv \ell \mod (p+1)$ with $0\leq
\ell\leq p$. Then the function $\psi_{n,k}$ satisfies the  orthogonality
conditions
 \begin{equation}\label{orthogredPsink2}
\int_{a_{k}}^{b_{k}}\psi_{n,k}(\tau)\,\tau^{s}\,d\sigma_{n,k}(\tau)=0,\quad
0\leq s\leq  Z(n,k)-Z(n,k+1)-1.
\end{equation}
\end{corollary}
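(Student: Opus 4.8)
The plan is to derive \eqref{orthogredPsink2} directly from the already-proved relations \eqref{orthogredPsink} by specializing to the single index $j=k$ and then recasting them in terms of $\sigma_{n,k}$. The key observation is that $\mu_{k,k}=\sigma_k^*$, so putting $j=k$ in \eqref{orthogredPsink} gives
\begin{equation*}
\int_{a_k}^{b_k}\psi_{n,k}(\tau)\,\tau^s\,d\sigma_k^*(\tau)=0,\qquad \lc\frac{\ell-k}{p+1}\rc\le s\le\lf\frac{n+p\ell-1-k(p+1)}{p(p+1)}\rf.
\end{equation*}
By \eqref{differenceznk}, the number of integers $s$ in this range is exactly $Z(n,k)-Z(n,k+1)$, which I abbreviate as $M_k$. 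What remains is purely to match the index set $0\le s\le M_k-1$ and the measure $\sigma_{n,k}$ of \eqref{orthogredPsink2}, and for this I would split according to the two cases in the definition \eqref{varymeas:sigma}.

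If $\ell\le k$, then \eqref{lowerbound} shows the lower limit above is $0$, so the displayed conditions already read for $0\le s\le M_k-1=Z(n,k)-Z(n,k+1)-1$; since $d\sigma_{n,k}=d\sigma_k^*$ in this case, \eqref{orthogredPsink2} follows at once. If instead $k<\ell$, then \eqref{lowerbound} gives lower limit $1$, so the conditions hold for $1\le s\le M_k$ (the upper bound equals $M_k$ precisely because the count starting from $1$ is $M_k$). Here I would substitute $s=s'+1$ and absorb the extra power of $\tau$ into the measure using $d\sigma_{n,k}(\tau)=\tau\,d\sigma_k^*(\tau)$ from \eqref{varymeas:sigma}, turning the conditions into
\begin{equation*}
\int_{a_k}^{b_k}\psi_{n,k}(\tau)\,\tau^{s'}\,d\sigma_{n,k}(\tau)=0,\qquad 0\le s'\le M_k-1=Z(n,k)-Z(n,k+1)-1,
\end{equation*}
which is exactly \eqref{orthogredPsink2}.

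I do not expect a genuine obstacle: this is essentially a bookkeeping reformulation of \eqref{orthogredPsink}. The one point needing a little care is checking, in the case $k<\ell$, that the upward shift of the lower index from $1$ to $0$ compensates exactly for the extra factor of $\tau$ built into $\sigma_{n,k}$, so that the total number of conditions stays $M_k=Z(n,k)-Z(n,k+1)$ in both cases. This becomes transparent once the count \eqref{differenceznk} is placed alongside the two-line descriptions \eqref{lowerbound} and \eqref{varymeas:sigma}.
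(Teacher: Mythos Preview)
Your proposal is correct and follows exactly the same approach as the paper: the corollary is obtained by specializing \eqref{orthogredPsink} to $j=k$ (so that $\mu_{k,k}=\sigma_k^*$), invoking \eqref{lowerbound} to identify the lower limit as $0$ or $1$ according to whether $\ell\le k$ or $k<\ell$, and then using the definition \eqref{varymeas:sigma} of $\sigma_{n,k}$ together with the count \eqref{differenceznk} to rewrite the range of $s$. Your write-up simply spells out in full the case analysis that the paper compresses into a single sentence.
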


Let us fix nonnegative integers $n$ and $\ell$, with $\ell$ satisfying $
0\leq \ell\leq p$. We associate to $n$ and $\ell$ three numbers $\alpha$, $\beta$,
and $v$, letting $\alpha$ and $\beta$ be, respectively,
the quotient and the remainder in the division of $n+p\ell-1$ by $p(p+1)$, and
letting $v$ be the quotient in the division of $\beta$ by $p+1$. That is,
\[
n+p\ell-1=\alpha p(p+1)+\beta,\quad 0\leq \beta\leq p(p+1)-1,
\]
\begin{equation}\label{defalpha}
\alpha=\lf\frac{n+p\ell-1}{p(p+1)}\rf,\quad
v=\lf\frac{\beta}{p+1}\rf.
\end{equation}
Notice that
\[
 0\leq v\leq p-1.
\]

\begin{lemma}\label{lemcounting1}
If $ \ell\leq v$,  the inequality (\ref{countingcond}) is equivalent to
\begin{align}\label{conds1}
\begin{array}{ll}
1\leq s\leq \alpha,\ &\mathrm{if}\  0\leq j<\ell,\\
0\leq s\leq \alpha,\ &\mathrm{if}\  \ell\leq j\leq v,\\
0\leq s\leq \alpha-1,\ &\mathrm{if}\   v<j\leq p-1,
\end{array}
\end{align}
while if $v< \ell$, then (\ref{countingcond}) is equivalent to
\begin{align}\label{conds2}
\begin{array}{ll}
& 1\leq s\leq \alpha, \quad \mathrm{if}\  0\leq j\leq v,\\
&1\leq s\leq \alpha-1,\quad \mathrm{if}\   v<j<\ell,\\
& 0\leq s\leq \alpha-1,\quad \mathrm{if}\  \ell\leq j\leq p-1.
\end{array}
\end{align}
Moreover,
\begin{equation}\label{eq:Znk}
Z(n,k)=\begin{cases}
\lc\frac{n-\ell}{p+1}\rc-k\alpha, & \mathrm{if}\ \ k\leq \ell,v,\\[1em]
\lc\frac{n-\ell}{p+1}\rc-k\alpha+\ell-v-1, & \mathrm{if}\ \  \ell,v<k,\\[1em]
\lc\frac{n-\ell}{p+1}\rc-k(\alpha+1)+\ell, & \mathrm{if}\ \ 0\leq \ell<k\leq
v,\\[1em]
\lc\frac{n-\ell}{p+1}\rc-k(\alpha-1)-v-1, & \mathrm{if}\ \ 0\leq v<k\leq
\ell.\\[1em]
\end{cases}
\end{equation}

\end{lemma}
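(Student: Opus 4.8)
The plan is to prove the lemma in three stages: first establish the equivalences \eqref{conds1}--\eqref{conds2} by evaluating both endpoints of \eqref{countingcond} explicitly, then count the admissible $s$ for each $j$, and finally sum these counts over $j\ge k$ to obtain \eqref{eq:Znk}. The lower endpoint $\lceil(\ell-j)/(p+1)\rceil$ is already pinned down by \eqref{lowerbound} (it is $0$ when $\ell\le j$ and $1$ when $j<\ell$, because $0\le\ell\le p$ forces $(\ell-j)/(p+1)\in(-1,1)$), so the work is in the upper endpoint.

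For the upper endpoint I would insert the two divisions defining $\alpha,\beta,v$: writing $n+p\ell-1=\alpha p(p+1)+\beta$ and $\beta=v(p+1)+w$ with $0\le w\le p$, one gets
\[
\frac{n+p\ell-1-j(p+1)}{p(p+1)}=\alpha+\frac{\beta-j(p+1)}{p(p+1)},\qquad \beta-j(p+1)=(v-j)(p+1)+w.
\]
The correction $\beta-j(p+1)$ lies in $[0,p(p+1))$ exactly when $0\le j\le v$ (using $\beta\le p(p+1)-1$ and $v-j\ge0$) and in $(-p(p+1),0)$ when $v<j\le p-1$ (since then $-(p-1)(p+1)\le\beta-j(p+1)\le-1$); hence the floor equals $\alpha$ for $j\le v$ and $\alpha-1$ for $j>v$. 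Combining the two endpoints according to the position of $j$ relative to $\ell$ and $v$ then produces precisely the three cases of \eqref{conds1} when $\ell\le v$ and of \eqref{conds2} when $v<\ell$.

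Next I would read off $M_j$ from these ranges: $[1,\alpha]$ and $[0,\alpha-1]$ each contribute $\alpha$, the range $[0,\alpha]$ contributes $\alpha+1$, and $[1,\alpha-1]$ contributes $\alpha-1$. A short separate computation fixes the base value: substituting the two divisions gives $n-\ell=(p+1)(\alpha p+v-\ell)+(w+1)$ with $1\le w+1\le p+1$, so $\lceil(n-\ell)/(p+1)\rceil=\alpha p+v-\ell+1$, and summing all $M_j$ over $0\le j\le p-1$ (in either case $\ell\le v$ or $v<\ell$) reproduces this same value, i.e. $Z(n,0)=\lceil(n-\ell)/(p+1)\rceil$. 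Finally, since $Z(n,k)=Z(n,0)-\sum_{j=0}^{k-1}M_j$, I would evaluate the partial sum in the four regimes $k\le\min(\ell,v)$, $k>\max(\ell,v)$, $\ell<k\le v$, and $v<k\le\ell$; each is a telescoping count giving, respectively, $k\alpha$, $k\alpha+v-\ell+1$, $k(\alpha+1)-\ell$, and $k(\alpha-1)+v+1$, which upon subtraction yield the four branches of \eqref{eq:Znk}.

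The routine but error-prone part is the bookkeeping of floors, ceilings, and the many sub-cases. The one genuinely delicate point, which I expect to be the main obstacle to a clean write-up, is that the count of $[1,\alpha-1]$ equals $\alpha-1$ only when $\alpha\ge1$. This range occurs only for indices with $v<j<\ell$, i.e. only when $\ell\ge v+2$, and I would verify that this configuration forces $\alpha\ge1$: if $\alpha=0$ then $n=v(p+1)+w-p\ell+1\le v-p+1\le0$, and the marginal value $n=0$ would force $v=p-1$, $w=p$, hence $\ell=p+1$, contradicting $\ell\le p$. Thus whenever the problematic range is actually used one has $\alpha\ge1$, so all the counts above are valid and the four formulas in \eqref{eq:Znk} hold.
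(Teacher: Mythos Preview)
Your argument is correct and follows essentially the same line as the paper's proof: evaluate the two endpoints of \eqref{countingcond} (using \eqref{lowerbound} for the lower one and the decomposition $\beta=v(p+1)+w$ to pin down the floor $\alpha$ or $\alpha-1$ for the upper one), read off $M_j$, and sum. The only organizational differences are that the paper sums $\sum_{j\ge k}M_j$ directly (writing out only the case $k\le\ell,v$ and leaving the rest ``similar''), while you compute $Z(n,0)$ first and subtract $\sum_{j<k}M_j$; your explicit check that $\alpha\ge1$ whenever the range $[1,\alpha-1]$ actually occurs is a point the paper does not address.
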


\begin{proof}
We begin by writing (\ref{countingcond}) in the form
\begin{align}\label{eq3}
\left\lceil\frac{\ell-j}{p+1}\right\rceil\leq s\leq \left\lfloor \alpha
+\frac{\beta-j(p+1)}{p(p+1)}\right\rfloor.
\end{align}
Now, since
\begin{equation}\label{eq5}
0\leq v\leq p-1,\quad v(p+1)\leq \beta<(v+1)(p+1),
\end{equation}
we have
\[
-1<-\frac{(p-1)}{p}\leq \frac{\beta-j(p+1)}{p(p+1)}\leq \frac{p(p+1)-1}{p(p+1)}<1,
\]
and since $\alpha$ is an integer, this implies that
\begin{equation}\label{upperbound}
 \left\lfloor \alpha +\frac{\beta-j(p+1)}{p(p+1)}\right\rfloor=\begin{cases}
  \alpha, & \mathrm{if}\ 0\leq j\leq v,\\
  \alpha-1, &\mathrm{if} \ v<j\leq p-1.
 \end{cases}
  \end{equation}
The inequalities \eqref{conds1}-\eqref{conds2} follow from (\ref{lowerbound})
and (\ref{upperbound}).

As for \eqref{eq:Znk}, we shall only prove it for the case $k\leq\ell,v $,
since the remaining cases listed in \eqref{eq:Znk} are proven similarly.  For every $j\in \{k,\ldots,p-1\}$, we can use \eqref{upperbound} to express, in terms of $\alpha$, the number $M_j$ of integer values $s$ that satisfy \eqref{eq3}. With this expressions at hand, one easily finds that if $k\leq  \ell, v$, then
\begin{align*}
\begin{split}
Z(n,k)=\sum_{j=k}^{p-1}M_j=
\alpha p+v-\ell+1-k\alpha.
\end{split}
\end{align*}
Now, using the expression that defines $\alpha$ in (\ref{defalpha}), we find
\begin{align*}
 \alpha p+v-\ell+1=\frac{n-\ell +(v+1)(p+1)-(\beta+1)}{p+1},
\end{align*}
which together with (\ref{eq5}) yields
\[
\frac{n-\ell}{p+1}\leq  \alpha p+v-\ell+1\leq
\frac{n-\ell}{p+1}+\frac{p}{p+1}.
\]
Being $ \alpha p+v-\ell+1$ an integer, we deduce that it must be equal to $ \left\lceil\frac{n-\ell}{p+1}\right\rceil$.\end{proof}

\subsection{AT-system property}
The system of continuous functions $u_1(x),\ldots,u_n(x)$ is said to be an algebraic Chebyshev system (AT-system) over the interval $[a,b]$ for the set of integers $(d_1,\ldots,d_n)$ ($d_j\geq 0$), if for any choice of polynomials $(P_1(x),\ldots,P_n(x))\not=(0,0,\ldots,0)$, with $\deg(P_j)\leq d_j-1$, the polynomial combination
\[
P_1(x)u_1(x)+\cdots+P_n(x)u_n(x)
\]
has at most $d_1+\cdots+d_n-1$ zeros on $[a,b]$. Here and in what follows, a polynomial of degree $-1$ is understood to be the constant zero function.

Since $\mu_{k,k}=\sigma_k^*$ and
$d\mu_{k,j}(t)=t\widehat{\mu}_{k+1,j}(t)d\sigma_k^*(t)$ for $k<j\leq p-1$, the
orthogonality conditions \eqref{orthogredPsink} can be equivalently written as
$\psi_{n,k}$ being orthogonal to polynomial linear combinations
of functions of the form
\[
1,t\widehat{\mu}_{k+1,k+1}(t),\ldots, t\widehat{\mu}_{k+1,m}(t),
\widehat{\mu}_{k+1,m+1}(t),\ldots,\widehat{\mu}_{k+1,p-1}(t),
\]
for some $k\leq m\leq  p-1$. We now  prove that
any such collection of functions forms an AT-system over $[a_k,b_k]$.

\begin{proposition}\label{lemma:ATsystem} Let $k$, $m$ be integers such that
$0\leq k\leq m\leq  p-1$. For each $j$ in the range $k\leq j\leq p-1$, let
$P_{j}$ be a polynomial of degree at most $d_j-1$, with $d_j\geq 0$, and
suppose that
\[
d_k\geq d_{k+1}\geq \cdots \geq d_{m}\geq d_{m+1}-1\geq d_{m+2}-1\geq
\cdots\geq d_{p-1}-1.
\]
If $(P_k,\ldots,P_{p-1})\not=(0,0,\ldots,0)$, then
\begin{equation}\label{eq:combCauchy}
H(z)=P_k(z)+\sum_{k+1\leq j\leq m}P_j(z)z\widehat{\mu}_{k+1,j}(z)+ \sum_{m< j
\leq p-1}P_j(z)\widehat{\mu}_{k+1,j}(z)
\end{equation}
 has at most $D_H:=\sum_{j=k}^{p-1}d_j-1$ zeros in $[a_{k},b_k]$.
\end{proposition}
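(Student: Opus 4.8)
The plan is to argue by downward induction on $k$, peeling off one Cauchy transform at each step. The base case is $k=p-1$, where $H=P_{p-1}$ is a polynomial of degree at most $d_{p-1}-1=D_H$ and hence has at most $D_H$ zeros. For the inductive step I assume the proposition at level $k+1$ and prove it at level $k$. The whole argument rests on two ingredients: a computation of the jump of $H$ across $\Delta':=[a_{k+1},b_{k+1}]$ that re-expresses it as a combination of the next level, and a zero-counting lemma for functions holomorphic off an interval.

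First I would note that $H$ is holomorphic in $\overline{\mathbb{C}}\setminus\Delta'$ and real-analytic on $[a_k,b_k]$, since $\Gamma_k\cap\Gamma_{k+1}=\emptyset$ forces $[a_k,b_k]\cap\Delta'=\emptyset$. Using $d\mu_{k+1,k+1}=d\sigma_{k+1}^*$ and $d\mu_{k+1,j}(t)=t\,\widehat{\mu}_{k+2,j}(t)\,d\sigma_{k+1}^*(t)$ for $j>k+1$, the jump of $H$ across $\Delta'$ is $-2\pi i$ times the measure $\mathcal{H}(x)\,d\sigma_{k+1}^*(x)$, where
\[
\mathcal{H}(x)=\sum_{k<j\le m}P_j(x)\,x\,g_j(x)+\sum_{m<j\le p-1}P_j(x)\,g_j(x),\qquad g_{k+1}\equiv 1,\ g_j(x)=x\,\widehat{\mu}_{k+2,j}(x)\ (j>k+1).
\]
The key point is that $\mathcal{H}$ is again a combination of the type treated by the proposition, one level higher. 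When $m>k$ every summand carries a factor $x$, so $\mathcal{H}(x)=x\,\mathcal{H}_1(x)$ with
\[
\mathcal{H}_1(x)=P_{k+1}(x)+\sum_{k+1<j\le m}P_j(x)\,x\,\widehat{\mu}_{k+2,j}(x)+\sum_{m<j\le p-1}P_j(x)\,\widehat{\mu}_{k+2,j}(x),
\]
which is exactly the level-$(k+1)$ combination with the same splitting index $m$; when $m=k$ one gets directly the level-$(k+1)$ combination with splitting index $p-1$. In either case the degree monotonicity required at level $k+1$ follows at once from the given chain. Crucially, since $\Delta'\subset\mathbb{R}_+$ or $\Delta'\subset\mathbb{R}_-$, the factor $x$ has constant sign on $\Delta'$, so $\mathcal{H}$ and $\mathcal{H}_1$ have the same sign changes there; the induction hypothesis bounds the number of zeros, hence of sign changes, of the level-$(k+1)$ combination on $\mathrm{supp}(\sigma_{k+1}^*)$ by $\kappa\le\sum_{j=k+1}^{p-1}d_j-1$. (If $\mathcal{H}\equiv0$, the induction hypothesis forces $P_{k+1}=\cdots=P_{p-1}=0$, so $H=P_k$ has at most $d_k-1\le D_H$ zeros.)

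The final step is a zero-counting lemma: a function $F$ holomorphic and real-symmetric off an interval $\Delta'$, whose polynomial part at infinity has degree $\delta$ and whose jump across $\Delta'$ is $-2\pi i$ times a measure $g\,d\sigma_{k+1}^*$ with $g$ changing sign at most $\kappa$ times on $\mathrm{supp}(\sigma_{k+1}^*)$, has at most $\delta+\kappa+1$ zeros in $\overline{\mathbb{C}}\setminus\Delta'$. To apply it to $F=H$ I would estimate the polynomial part: since $z\,\widehat{\mu}_{k+1,j}(z)\to\mu_{k+1,j}(\mathbb{R})$ and $\widehat{\mu}_{k+1,j}(z)=O(1/z)$, every summand of $H$ grows at most like $z^{d_k-1}$ (using $d_j\le d_k$ for $j\le m$ and $d_j\le d_k+1$ for $j>m$), so $\delta\le d_k-1$. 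Combining, the number of zeros of $H$ in $\overline{\mathbb{C}}\setminus\Delta'$ — in particular the number in $[a_k,b_k]$ — is at most $(d_k-1)+\big(\sum_{j=k+1}^{p-1}d_j-1\big)+1=D_H$, as desired.

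The main obstacle is the zero-counting lemma, and specifically pinning down the additive constant, since the target bound $D_H$ is sharp. I would prove it by the argument principle on $\overline{\mathbb{C}}\setminus\Delta'$: the winding over a large circle contributes $\delta$, while the contribution of the two banks of $\Delta'$ is governed by the sign changes of $\Imag F_+$, which are precisely the sign changes of $g$. The delicate points are the behavior of $F$ at the endpoints of $\Delta'$, where the Cauchy transforms may have integrable singularities, and verifying that $\kappa$ sign changes of the jump produce at most $\kappa+1$ extra zeros; the two-function case $H=P_k+P_{k+1}\widehat{\sigma}_{k+1}^{*}$, where the count must read $d_k+d_{k+1}-1$, fixes the constant. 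A secondary point is reducing complex polynomial coefficients to the real-symmetric setting the lemma uses.
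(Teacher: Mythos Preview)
Your inductive setup, the jump computation, and the identification of the next-level combination $\mathcal{H}_1$ (your $G$) are all correct and coincide with what the paper does. The difference lies in how the induction hypothesis is leveraged.

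The paper does \emph{not} use an argument-principle/zero-counting lemma. Instead it argues by contradiction: assuming $H$ has at least $D_H+1$ zeros in $[a_k,b_k]$, it lets $T$ be the monic polynomial vanishing at those zeros, so that $H/T=O(z^{-(D_H+2-d_k)})$ at infinity and is holomorphic off $[a_{k+1},b_{k+1}]$. Integrating $z^u H(z)/T(z)$ around a contour enclosing $[a_{k+1},b_{k+1}]$ (and applying Fubini plus Cauchy's formula) produces the orthogonality relations
\[
\int_{a_{k+1}}^{b_{k+1}} \tau^{u}\,G(\tau)\,\frac{d\sigma_{k+1}^*(\tau)}{T(\tau)}=0\quad\text{(or with an extra factor }\tau\text{)},\qquad u=0,\dots,D_H-d_k,
\]
with $G=\mathcal{H}_1$. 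Since $T$ has constant sign on $[a_{k+1},b_{k+1}]$, these $D_H-d_k+1$ conditions force $G$ to have at least $D_H-d_k+1$ sign changes there; but by the induction hypothesis $G$ has at most $D_H-d_k$ zeros, so $G\equiv 0$, contradicting $(P_{k+1},\dots,P_{p-1})\neq 0$.

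What each approach buys: the paper's route is strictly more elementary---it uses only Cauchy's integral formula and the standard ``orthogonality against $1,\tau,\dots,\tau^N$ forces $N+1$ sign changes'' fact, and the sharp constant falls out automatically. Your route is conceptually transparent (the jump governs the zeros) but, as you yourself flag, the zero-counting lemma with the exact constant $\delta+\kappa+1$ is the entire difficulty: handling the endpoint behaviour of the Cauchy transforms, the possibly vanishing leading coefficient (so $\delta$ may drop), and the reduction to real coefficients all require work comparable to the whole proof. In the paper's argument none of these issues arise, because dividing by $T$ converts ``too many zeros of $H$'' directly into ``too many orthogonality conditions on $G$'' without ever counting winding numbers.
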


\begin{proof} The proof is by induction on $k$. If $k=p-1$, the statement is
trivially true, as in this case we simply have $H(z)=P_{p-1}(z)$ and
$D_H=d_{p-1}-1$. Assume that the thesis of the proposition is also true for
$k+1$, $0<k+1\leq p-1$, but that for the value $k$, there is a corresponding
function $H$ of the form (\ref{eq:combCauchy}) with at least $D_H+1$ zeros in
$[a_k,b_k]$.

Then, for this $H$ not all the polynomials $P_j$ corresponding to $k+1\leq
j\leq p-1$, can be simultaneously zero. Let $T$ be a monic polynomial that
vanishes at the zeros of $H$ in $[a_k,b_k]$, and let $\gamma$ be
a positively oriented simple contour around the interval $[a_{k+1},b_{k+1}]$
that leaves outside the zeros of $T$. Since $H/T$ is analytic outside $[a_{k+1},b_{k+1}]$ and 
\begin{align}\label{eq:HT1}
 \begin{split}
\frac{H(z)}{T(z)}={}& O\left(\frac{1}{z^{D_H+2-d_k}}\right),\quad z\to\infty,
\end{split}
\end{align}
we have 
\begin{align}\label{int:HoverT}
\frac{1}{2\pi i}\int_{\gamma} \frac{z^{u}\,H(z)}{T(z)}\,dz=0,\quad u=0,\ldots,D_H-d_k.
\end{align}

By definition, $\widehat{\mu}_{k+1,j}(z)=\int_{a_{k+1}}^{b_{k+1}}\frac{d\mu_{k+1,j}(\tau)}{z-\tau
}$, and  so an application of Fubini's theorem and Cauchy's integral formula give
\begin{align}\label{FubiniCauchy}
\frac{1}{2\pi i}\int_{\gamma}
\frac{z^{u}P_j(z)}{T(z)}\widehat{\mu}_{k+1,j}(z)dz=\int_{a_{k+1}}^{b_{k+1}}\frac{\tau^{u}P_j(\tau)}{T(\tau)}d\mu_{k+1,j}(\tau),\quad u\in\mathbb{N}\cup\{0\}.
\end{align}
If we now replace $H$ in \eqref{int:HoverT} by the right-hand side of (\ref{eq:combCauchy}), we get from \eqref{FubiniCauchy} and \eqref{def:mukj}
 that
\begin{align}\label{orthocondG1}
\int_{a_{k+1}}^{b_{k+1}}\tau^{u}G(\tau)\frac{\tau^{1-\delta_{mk}}d\sigma^*_{k+1}(\tau)}{T(\tau)}= 0,\quad
u=0,\ldots,D_H-d_k, &
\end{align}
where $\delta_{mk}$ is the Kronecker delta and $G$ is the function that, in case $m=k$, is given by
\[
G(z)=P_{k+1}(z)+\sum_{k+2\leq j\leq p-1}P_j(z)z\widehat{\mu}_{k+2,j}(z),
\]
while if $k+1\leq m\leq p-1$, 
\[
G(z)=P_{k+1}(z)+\sum_{k+2\leq j\leq m} z P_j(z)\widehat{\mu}_{k+2,j}(z)+\sum_{m<
j\leq p-1}P_j(z)\widehat{\mu}_{k+2,j}(z).
\]
By induction hypothesis,  the function $G$ has at most
$D_G=D_H-d_k$ zeros in $[a_{k+1},b_{k+1}]$, which together with
\eqref{orthocondG1} implies that $G$ must be identically
zero, yielding a contradiction, since at least one of the $P_j$'s for $k+1\leq
j\leq p-1$ is not identically zero.
\end{proof}

\begin{corollary}\label{lemma:orthosystem} Let $k$, $m$ be integers such that
$0\leq k\leq m\leq  p-1$. Let $\{d_j\}_{j=k}^{p-1}$ be a finite sequence of
nonnegative integers such that
\[
d_k\geq d_{k+1}\geq \cdots \geq d_{m}\geq d_{m+1}-1\geq d_{m+2}-1\geq
\cdots\geq d_{p-1}-1.
\]
Suppose $F\not\equiv 0$ is a function analytic and real-valued on $[a_k,b_k]$,
satisfying the orthogonality conditions
\begin{align}\label{orthocondgeneral1}
 \int_{a_k}^{b_k}F(\tau)\tau^{s+\delta}d\mu_{k,j}(\tau)=0,\quad 0\leq s\leq d_j-1,\quad
k\leq j\leq m,
\end{align}
\begin{align}\label{orthocondgeneral2}
 \int_{a_k}^{b_k}F(\tau)\tau^{s}d\mu_{k,j}(\tau)=0,\quad 0\leq s\leq d_j-1,\quad m<
j\leq p-1,
\end{align}
where the constant $\delta=1$ if $m<p-1$ and $d_{m+1}=d_m+1$, otherwise
$\delta$ could be taken to be either $1$ or $0$. Then, $F$ has at least
\[
N:=\sum_{j=k}^{p-1}d_j
\]
zeros of odd multiplicity in $(a_{k},b_k)$.
\end{corollary}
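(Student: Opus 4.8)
The plan is to argue by contradiction: assuming $F$ has too few sign changes, I would build, out of the space of functions appearing in Proposition \ref{lemma:ATsystem}, a test function that both violates the orthogonality relations \eqref{orthocondgeneral1}--\eqref{orthocondgeneral2} and is forced to be orthogonal to $F$, the contradiction coming from the AT-system property. The first and decisive step is to recast \eqref{orthocondgeneral1}--\eqref{orthocondgeneral2} as a single scalar orthogonality against a sign-definite measure. Using $d\mu_{k,k}=d\sigma_k^*$ and $d\mu_{k,j}(\tau)=\tau\,\widehat{\mu}_{k+1,j}(\tau)\,d\sigma_k^*(\tau)$ for $k<j$, each integral $\int F\tau^s\,d\mu_{k,j}$ becomes an integral against $d\sigma_k^*$ of $F$ times a monomial times $\widehat{\mu}_{k+1,j}$. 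Carrying this out term by term, one checks that \eqref{orthocondgeneral1}--\eqref{orthocondgeneral2} are equivalent to
\[
\int_{a_k}^{b_k} F(\tau)\,H(\tau)\,d\rho(\tau)=0\qquad\text{for every }H\in\mathcal{H},
\]
where $\mathcal{H}$ is the $N$-dimensional space of functions of the form \eqref{eq:combCauchy} with $\deg P_j\le d_j-1$, and $\rho$ is a sign-definite measure equal to $\tau\,d\sigma_k^*$ when $\delta=1$ and to $d\sigma_k^*$ when $\delta=0$. The factor $\tau$ in the case $\delta=1$ is exactly what absorbs the shift $\tau^{s+\delta}$ in \eqref{orthocondgeneral1} while keeping the $m<j$ terms of \eqref{orthocondgeneral2} matched; since $[a_k,b_k]$ lies on one side of the origin, $\tau$ has constant sign there, so $\rho$ is sign-definite and its total variation inherits infinitely many points of support from $\sigma_k$.

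Next I would invoke Proposition \ref{lemma:ATsystem}: under the hypothesis $d_k\ge\cdots\ge d_m\ge d_{m+1}-1\ge\cdots\ge d_{p-1}-1$, the space $\mathcal{H}$ is an AT-system on $[a_k,b_k]$, so every nonzero $H\in\mathcal{H}$ has at most $N-1$ zeros there, counted with multiplicity. Suppose, for contradiction, that $F$ has at most $N-1$ zeros of odd multiplicity in $(a_k,b_k)$; as $F$ is real and analytic these are precisely its sign changes, say at $x_1<\cdots<x_r$ with $r\le N-1$. Using the standard interpolation property of Chebyshev systems I would produce a nonzero $H\in\mathcal{H}$ changing sign exactly at $x_1,\ldots,x_r$ and nowhere else: prescribe simple zeros at the $x_i$ together with double zeros at enough auxiliary interior points to exhaust the $N-1$ admissible zeros. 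Since $H$ cannot have more than $N-1$ zeros, the prescribed ones are all of them, and the double zeros produce no sign change (a small parity adjustment handles the case in which $N-1-r$ is odd). Because each $\mu_{k+1,j}$ is a real measure supported on $[a_{k+1},b_{k+1}]$, which is disjoint from $[a_k,b_k]$ by the assumption $\Gamma_k\cap\Gamma_{k+1}=\emptyset$, the Cauchy transforms $\widehat{\mu}_{k+1,j}$ are real on $[a_k,b_k]$, and hence $H$ is real there.

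With such an $H$ the product $FH$ has constant sign on $[a_k,b_k]$ and is not identically zero, so, $\rho$ being sign-definite with infinite support, $\int_{a_k}^{b_k}FH\,d\rho\neq 0$, contradicting the orthogonality recorded in the first step. I expect the main obstacle to be precisely that first step: verifying that \eqref{orthocondgeneral1}--\eqref{orthocondgeneral2} are equivalent to orthogonality to the \emph{full} $N$-dimensional space $\mathcal{H}$ against a single measure. One must track the shift $\delta$ and the factors of $\tau$ generated by $d\mu_{k,j}=\tau\,\widehat{\mu}_{k+1,j}\,d\sigma_k^*$ so that the monomial ranges $\{\tau^\delta,\dots,\tau^{\delta+d_j-1}\}$ for $k\le j\le m$ and $\{\tau,\dots,\tau^{d_j}\}$ for $m<j\le p-1$ align exactly with the polynomial multipliers allowed in \eqref{eq:combCauchy}, without losing a dimension; this is exactly why the precise value of $\delta$ and the ordered degree condition enter the statement. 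The sign-controlled construction of $H$ in the second step is the other technical point, but it is routine for Chebyshev systems.
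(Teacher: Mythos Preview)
Your approach is essentially the same as the paper's: rewrite \eqref{orthocondgeneral1}--\eqref{orthocondgeneral2} as orthogonality of $F$ to the full $N$-dimensional span $\mathcal{H}$ from Proposition~\ref{lemma:ATsystem} against a sign-definite measure ($d\sigma_k^*$ when $\delta=0$, $\tau\,d\sigma_k^*$ when $\delta=1$), then use the AT-system bound to build a test function $H$ whose sign changes match those of $F$. One small point worth making explicit: in the $\delta=0$ case the factor $\tau$ from $d\mu_{k,j}=\tau\,\widehat{\mu}_{k+1,j}\,d\sigma_k^*$ attaches to \emph{every} $j>k$, so the relevant $\mathcal{H}$ is the $m=p-1$ instance of \eqref{eq:combCauchy}; this is consistent with Proposition~\ref{lemma:ATsystem} precisely because $\delta=0$ forces the full sequence $(d_j)$ to be nonincreasing. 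The only stylistic difference is that the paper places the $N-1-r$ surplus zeros of $H$ at the endpoints of $[a_k,b_k]$ rather than at interior double points; either device works and yours handles the parity issue you flagged just as well.
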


\begin{proof} The orthogonality conditions \eqref{orthocondgeneral1} and
\eqref{orthocondgeneral2} imply that
\begin{align}\label{orthocondgeneralH}
 \int_{a_k}^{b_k}\tau^\delta F(\tau)H(\tau)d\sigma^*_k(z)=0
\end{align}
for every $H$ of the form
\[
H(z)=P_k(z)+\sum_{k+1\leq j\leq m}P_j(z)z\widehat{\mu}_{k+1,j}(z) + \sum_{m< j
\leq p-1}P_j(z)\widehat{\mu}_{k+1,j}(z),
\]
where $P_j$ is a polynomial of degree at most $d_j-1$ for each $k\leq j\leq
p-1$, and if $\delta=0$ then we take $m=p-1$. Applying Proposition \ref{lemma:ATsystem}, we see that any
such function $H$, not identically zero, has at most $N-1$ zeros in $[a_k,b_k]$. Consequently, if $F$ had a number $D<N$ of zeros with odd multiplicity in
$(a_k,b_k)$, say $x_1,\ldots,x_D$, we could find $H$ with simple zeros at these
$x_k$'s, and with $N-D-1$ zeros (counting multiplicities) at the endpoints of
the interval $[a_k,b_k]$. Since $H$ does not admit any more zeros on that
closed interval, the integral \eqref{orthocondgeneralH} cannot be zero.
Therefore, $D\geq N$.
\end{proof}

\subsection{Normality of the Nikishin system and zeros of $Q_n$}

The Nikishin system of measures $(s_0,\ldots,s_{p-1})$ is said to be normal provided that the degree of the multi-orthogonal polynomial $Q_n$ is maximal,
that is, $d_n=n$ for all $n\geq 0$.
We will prove this normality in this section.

\begin{proposition}\label{prop:oddmult}
Let $n$, $k$, and $\ell$ be nonnegative integers satisfying $0\leq k\leq
p-1$, and $d_n\equiv\ell\mod(p+1)$, $0\leq \ell\leq p$. Then, the function
$\psi_{n,k}$ has at least $Z(n,k)$ zeros with odd multiplicity in the open
interval $(a_{k},b_{k})$. In particular, if follows that
\[
d_n=n,\quad n\geq 0,
\]
that is, the polynomial $Q_n$ has degree $n$, and the associated
polynomial
$\mathcal{Q}_d$ has exactly
\[
d=\frac{n-\ell}{p+1}
\]
zeros,  which are all simple and located in $(a_0,b_0)$.
\end{proposition}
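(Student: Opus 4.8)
The plan is to establish the zero-counting statement for each $\psi_{n,k}$ and then leverage it to force $d_n = n$. First I would apply Corollary~\ref{lemma:orthosystem} to the function $F = \psi_{n,k}$ on the interval $[a_k,b_k]$. The orthogonality conditions \eqref{orthogredPsink} satisfied by $\psi_{n,k}$ are precisely of the form \eqref{orthocondgeneral1}--\eqref{orthocondgeneral2} needed by that corollary, where the degrees $d_j$ are dictated by the counting quantities: the number of exponents $s$ available for each $j$ is $M_j$, and the lower bound $\lceil (\ell-j)/(p+1)\rceil$ from \eqref{lowerbound} governs whether the shift $\delta$ is present (this is the origin of the monomial factor $\tau$ distinguishing the two cases in \eqref{varymeas:sigma}). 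Summing $M_j$ over $k\leq j\leq p-1$ gives exactly $Z(n,k)$, so the corollary yields at least $Z(n,k)$ zeros of odd multiplicity for $\psi_{n,k}$ in $(a_k,b_k)$. The main bookkeeping obstacle here is verifying that the monotonicity hypothesis $d_k\geq\cdots\geq d_m\geq d_{m+1}-1\geq\cdots\geq d_{p-1}-1$ on the degree sequence holds; this should follow from Lemma~\ref{lemcounting1}, since the explicit formulas for $M_j$ in its proof show the $M_j$ are constant except for a single jump of one unit at the transition determined by $v$ and $\ell$, which is exactly the pattern the corollary permits.

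Next I would specialize to $k=0$ and use the zero count to pin down the degree. Recall $\psi_{n,0}=\mathcal{Q}_d$ with $\deg\mathcal{Q}_d = d = \lfloor d_n/(p+1)\rfloor$. The corollary guarantees $\mathcal{Q}_d$ has at least $Z(n,0)$ zeros of odd multiplicity in $(a_0,b_0)$, hence $d\geq Z(n,0)$. On the other hand, by the definition of $Q_n$ as the monic polynomial of lowest degree satisfying \eqref{orthog:Qn}, the degree $d_n$ cannot exceed the total number of orthogonality conditions, which is $Z(n,0)$; this gives an upper bound on $d$. The goal is to show these two bounds are compatible only when $d_n=n$. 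Using the formula for $Z(n,0)$ from \eqref{eq:Znk} (the case $k=0\leq\ell,v$ applies since $Z(n,0)$ takes the value $\lceil(n-\ell)/(p+1)\rceil$), I would check that $Z(n,0)=\lceil (n-\ell)/(p+1)\rceil = d$ precisely when $d_n = n$, and that any strict deficiency $d_n<n$ would make the number of conditions strictly exceed the degree $d$, contradicting $d\geq Z(n,0)$.

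The heart of the argument, and the step I expect to be the main obstacle, is the normality deduction $d_n=n$. The polynomial $Q_n$ is defined as the lowest-degree monic solution of the linear system \eqref{orthog:Qn}, so a priori $d_n$ could be smaller than $n$ if the system degenerated. The zero count from the corollary provides the crucial rigidity: since $\mathcal{Q}_d$ must carry at least $Z(n,0)$ sign changes but has degree only $d=\lfloor d_n/(p+1)\rfloor$, we obtain $d\geq Z(n,0)$, and combined with $\ell\equiv d_n\bmod(p+1)$ and the arithmetic of \eqref{eq:Znk} this forces $d_n\geq n$. Since $d_n\leq n$ always (the total count of conditions \eqref{orthog:Qn} is $n$, which bounds the required degree), equality follows. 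Once $d_n=n$ is established, $\ell$ is the residue of $n$ modulo $p+1$ and $d=(n-\ell)/(p+1)$, so the zero count becomes exact: $\mathcal{Q}_d$ has exactly $d$ zeros, all of odd multiplicity and hence simple (a degree-$d$ polynomial cannot have more than $d$ zeros), all lying in $(a_0,b_0)$. The delicate point throughout is ensuring the floor and ceiling arithmetic lines up so that the lower bound $d\geq Z(n,0)$ and the upper bound on $d_n$ squeeze to equality; I would handle this by invoking the identity $Z(n,0)=\lceil(n-\ell)/(p+1)\rceil$ already derived in Lemma~\ref{lemcounting1}.
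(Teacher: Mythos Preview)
Your approach is essentially the paper's: invoke Corollary~\ref{lemma:orthosystem} with the explicit $M_j$ values supplied by Lemma~\ref{lemcounting1} (the paper spells out the five cases $k\leq\ell\leq v$, $k\leq v<\ell$, $\ell,v<k$, $\ell<k\leq v$, $v<k\leq\ell$ explicitly, but the content is the same), and then squeeze $\lceil(n-\ell)/(p+1)\rceil \leq d = (d_n-\ell)/(p+1) \leq (n-\ell)/(p+1)$ to force $d_n=n$. One slip to clean up: in your second paragraph you say the number of conditions in \eqref{orthog:Qn} is $Z(n,0)$, but it is $n$ (as you correctly note in your third paragraph); the only upper bound needed is $d_n\leq n$, which with the lower bound $d\geq Z(n,0)=\lceil(n-\ell)/(p+1)\rceil$ gives the squeeze directly.
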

\begin{proof}
The  total number of orthogonality conditions in \eqref{orthogredPsink} is given by  
$Z(n,k)$ as defined in  \eqref{def:Znk}.  Using Lemma \ref{lemcounting1}, these orthogonality conditions can be more specifically written, but since $j$ varies in \eqref{orthogredPsink} only from $k$ to $p-1$, we split the analysis in the following five cases: 1) $k\leq \ell\leq v\leq
p-1$, 2) $k\leq v< \ell\leq p$, 3) $0\leq \ell,v<k$, 4) $0\leq \ell<k\leq v$, and 5) $v<k\leq \ell$. In each of these cases, we use \eqref{conds1} and \eqref{conds2} to write, for each $j\in \{k,\ldots,p-1\}$, the relation \eqref{orthogredPsink} as an orthogonality of $\psi_{n,k}$ against $\tau^s$ or against $\tau^{s+1}$, according to whether the range of $s$ in \eqref{conds1}-\eqref{conds2} starts from $s=0$ or from $s= 1$, respectively. For instance, in the first case $k\leq \ell\leq v\leq
p-1$, \eqref{orthogredPsink} takes the form
\begin{align*}
\int_{a_{k}}^{b_{k}}\psi_{n,k}(\tau)\,\tau^{s+1}\,d\mu_{k,j}(\tau) & =0, \qquad
0\leq s\leq \alpha-1,\quad k\leq j<\ell,\notag\\
\int_{a_{k}}^{b_{k}}\psi_{n,k}(\tau)\,\tau^{s}\,d\mu_{k,j}(\tau) & =0, \qquad
0\leq s\leq \alpha,\quad \ell\leq j\leq v,\\
\int_{a_{k}}^{b_{k}}\psi_{n,k}(\tau)\,\tau^{s}\,d\mu_{k,j}(\tau) & =0, \qquad
0\leq s\leq \alpha-1,\quad v<j\leq p-1.\notag
\end{align*}

Written as such, in each of the five cases the orthogonality relations \eqref{orthogredPsink} adhere to the form of \eqref{orthocondgeneral1}-\eqref{orthocondgeneral2}, and so  Corollary \ref{lemma:orthosystem} ensures that $\psi_{n,k}$ has at least $Z(n,k)$  zeros of odd multiplicity in $(a_k,b_k)$. Particularly, for $k=0$,  $\psi_{n,0}=\mathcal{Q}_d$ has at least
$Z(n,0)=\lc\frac{n-\ell}{p+1}\rc$ zeros of odd multiplicity in $(a_0,b_0)$.
Hence,
\[
 \lc\frac{n-\ell}{p+1}\rc\leq d=\frac{d_n-\ell}{p+1}\leq \frac{n-\ell}{p+1},
\]
finishing the proof of the proposition.
\end{proof}

\subsection{On the difference $Z(n,j)-Z(n,j+1)$}

We now prove an auxiliary result that will be useful later.

\begin{lemma}
 Let $n$ be any nonnegative integer. Suppose  $n\equiv\ell\mod (p+1)$ and $n\equiv r\mod
p$, $0\leq \ell\leq p$, $0\leq r\leq p-1$, and let
\begin{align}\label{def:lambda}
\lambda=\lf\frac{n}{p(p+1)}\rf.
\end{align}
Then, for every $j=0,\ldots,p-1$ we have
\begin{equation}\label{differenceznj}
Z(n,j)-Z(n,j+1)= \begin{cases}
                      \lambda,&  j<\ell\leq r,\\
                      \lambda+1,& \ell\leq j<  r,\\
                      \lambda, & \ell\leq r\leq j,\\
                      \lambda+1,& j< r<\ell,\\
                      \lambda,& r\leq j<\ell,\\
                       \lambda+1,&  r<\ell\leq j.
                     \end{cases}
\end{equation}
\end{lemma}
\begin{proof}
Let us write $n=mp +r$ with $m\geq 0$, and
\[
 m=\lambda (p+1)+q,\quad 0\leq q\leq p,
\]
so that 
\begin{equation}\label{ellkandq}
n=\lambda p(p+1)+pq+r,\quad  0\leq pq+r\leq p(p+1)-1,
\end{equation}
and  $\lambda$  is given by \eqref{def:lambda}. In terms of these quantities,  the remainder $\ell$ in the division of $n$ by $p+1$ is given by
\begin{align}\label{nlambdaq}
\ell=\begin{cases}
r-q,\ & \ell\leq r,\\
p+1+r-q,\ &  r<\ell.
\end{cases}
\end{align}

Combining \eqref{differenceznk}, \eqref{ellkandq}, and \eqref{nlambdaq}, we get that for all $0\leq j\leq p-1$,
\begin{align*}
Z(n,j)-Z(n,j+1)={}&\lambda+\left\lfloor\frac{
(r-j)(p+1)-1}{p(p+1)}\right\rfloor+ \left(\begin{cases}
                      0,& \ell\leq r\\
                      1,& r< \ell
                     \end{cases}\right)+\left(
                     \begin{cases}
                      1,& \ell\leq j\\
                      0,& j< \ell
                     \end{cases}\right),
\end{align*}
thus \eqref{differenceznj} follows.\end{proof}

\subsection{Order of decay and zeros of the functions of the second kind}
\begin{proposition}
Let $1\leq k\leq p$, and suppose that $n\equiv\ell\mod(p+1)$. Then, as
$z\to\infty$,
\begin{align}\label{decayinfpsink}
\psi_{n,k}(z) & =O(z^{-N(n,k)}),
\end{align}
where
\[
 N(n,k)=\begin{cases}
         Z(n,k-1)-Z(n,k), & \ell<k,\\
         Z(n,k-1)-Z(n,k)+1, & k\leq \ell,
        \end{cases}
\]
recall $Z(n,p)=0$.
\end{proposition}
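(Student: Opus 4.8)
The plan is to read the decay order directly off the integral representation \eqref{recurrenceforpsipequena} of $\psi_{n,k}$ by expanding the Cauchy kernel at infinity and invoking the orthogonality relations \eqref{orthogredPsink2}. The two branches defining $N(n,k)$ will correspond exactly to the two branches of \eqref{recurrenceforpsipequena}.

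Fix $1\le k\le p$. First I would note that, since consecutive stars are disjoint, the intervals $[a_{k-2},b_{k-2}]$ and $[a_{k-1},b_{k-1}]$ are disjoint; hence $\psi_{n,k-1}$ --- a polynomial if $k=1$, and otherwise a function analytic off $[a_{k-2},b_{k-2}]$ --- is analytic and bounded on a neighborhood of $[a_{k-1},b_{k-1}]$, so it is $\sigma_{n,k-1}$-integrable and the expansion below is legitimate. For $|z|>\max\{|a_{k-1}|,|b_{k-1}|\}$ I would expand the geometric series $(z-\tau)^{-1}=\sum_{s\ge 0}\tau^{s}z^{-s-1}$ and integrate term by term to obtain
\[
\int_{a_{k-1}}^{b_{k-1}}\frac{\psi_{n,k-1}(\tau)}{z-\tau}\,d\sigma_{n,k-1}(\tau)=\sum_{s=0}^{\infty}\frac{c_{s}}{z^{s+1}},\qquad c_{s}:=\int_{a_{k-1}}^{b_{k-1}}\tau^{s}\,\psi_{n,k-1}(\tau)\,d\sigma_{n,k-1}(\tau).
\]

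The crucial input is the orthogonality relation \eqref{orthogredPsink2} written for the index $k-1$ (valid since $0\le k-1\le p-1$), which gives $c_{s}=0$ for $0\le s\le Z(n,k-1)-Z(n,k)-1$. Setting $M:=Z(n,k-1)-Z(n,k)\ge 0$ (the monotonicity $Z(n,k-1)\ge Z(n,k)$ was recorded earlier), the first $M$ coefficients vanish, so the above Cauchy transform is $O(z^{-M-1})$ as $z\to\infty$. It then remains to match the two branches of \eqref{recurrenceforpsipequena}: when $k\le\ell$, $\psi_{n,k}$ equals the transform, so $\psi_{n,k}(z)=O(z^{-M-1})$ with $M+1=N(n,k)$; when $\ell<k$, $\psi_{n,k}$ is the same transform multiplied by $z$, so $\psi_{n,k}(z)=O(z^{-M})$ with $M=N(n,k)$. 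This gives \eqref{decayinfpsink} in both cases. I expect no genuine obstacle here: the argument is a single moment count, and the only point requiring care is the bookkeeping observation that the extra factor $z$, present precisely in the branch $\ell<k$, is exactly what accounts for the ``$+1$'' discrepancy between the two formulas for $N(n,k)$.
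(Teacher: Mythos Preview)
Your proof is correct and follows essentially the same approach as the paper: expand the Cauchy kernel in the integral representation \eqref{recurrenceforpsipequena} and kill the leading moments via \eqref{orthogredPsink2} applied at index $k-1$. The only cosmetic difference is that the paper rewrites both the expansion and the orthogonality in terms of $d\sigma_{k-1}^{*}$ (absorbing the factor $\tau$ from $d\sigma_{n,k-1}$ when $k\le\ell$), whereas you keep the measure $d\sigma_{n,k-1}$ throughout; the bookkeeping is equivalent.
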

\begin{proof}
From \eqref{recurrenceforpsipequena} and \eqref{varymeas:sigma} we see that if $\ell<k$, the
Laurent expansion of $\psi_{n,k}$ at infinity has the form
\begin{align}\label{expansionforpsi2}
 \psi_{n,k}(z)=&\sum_{s=0}^\infty
z^{-s}\int_{a_{k-1}}^{b_{k-1}}\psi_{n,k-1}(\tau)\tau^{s}d\sigma_{k-1}^*(\tau),
 \end{align}
while if $k\leq \ell$, the expansion of $\psi_{n,k}$ at infinity
is as in \eqref{expansionforpsi2} but with the series starting from $s= 1$ (instead of from $s=0$). Combining these expansions with\eqref{orthogredPsink2}   yields
\eqref{decayinfpsink}.
\end{proof}

We are now in position to prove the following result.

\begin{proposition}\label{prop:exactnumberzeros}
For each $n\geq 0$ and $k=0,\ldots,p-1$, the function $\psi_{n,k}$ has exactly $Z(n,k)$
zeros in $\mathbb{C}\setminus([a_{k-1},b_{k-1}]\cup\{0\})$; they are all simple
and lie in the open interval $(a_{k},b_{k})$. The function $\psi_{n,p}$ has no
zeros in $\mathbb{C}\setminus([a_{p-1},b_{p-1}]\cup\{0\})$.
\end{proposition}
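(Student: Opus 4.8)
The plan is to induct on $k$ for fixed $n$, pinning down the exact count by squeezing the lower bound already available from Proposition \ref{prop:oddmult} against a matching upper bound extracted from the integral representation \eqref{recurrenceforpsipequena} and the decay rate \eqref{decayinfpsink}. The base case $k=0$ is immediate: since $\psi_{n,0}=\mathcal{Q}_d$ is a polynomial of exact degree $d=(n-\ell)/(p+1)=Z(n,0)$ whose zeros, by Proposition \ref{prop:oddmult}, are all simple and lie in $(a_0,b_0)\subset(0,\infty)$, it has exactly $Z(n,0)$ zeros, all nonzero, and none elsewhere. For the inductive step I assume the assertion for $k-1$, so that $\psi_{n,k-1}$ has exactly $Z(n,k-1)$ zeros, all simple, in $(a_{k-1},b_{k-1})$; as this interval is disjoint from $[a_{k-2},b_{k-2}]\cup\{0\}$, these are precisely its zeros there, and in particular $\psi_{n,k-1}$, which is real on $(a_{k-1},b_{k-1})$ (it is the Cauchy transform of a real measure, analytic off $[a_{k-2},b_{k-2}]$; for $k-1=0$ use \eqref{symm:Qn}), undergoes there exactly $j:=Z(n,k-1)$ sign changes.

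For the lower bound, Proposition \ref{prop:oddmult} already yields at least $Z(n,k)$ zeros of odd multiplicity in $(a_k,b_k)$ (for $k=p$ this is vacuous, $Z(n,p)=0$). The point is that $(a_k,b_k)$ is disjoint from $[a_{k-1},b_{k-1}]\cup\{0\}$: consecutive stars do not meet at the origin, so $[a_{k-1},b_{k-1}]$ and $[a_k,b_k]$ lie on opposite sides of $0$ and can share at most the point $0$, while $0\notin(a_k,b_k)$. Hence those $Z(n,k)$ odd-order zeros all count toward the zeros of $\psi_{n,k}$ in $\mathbb{C}\setminus([a_{k-1},b_{k-1}]\cup\{0\})$, and it remains only to establish the complementary upper bound $Z(n,k)$.

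For the upper bound I use \eqref{recurrenceforpsipequena}, which presents $\psi_{n,k}(z)=z^{\epsilon}\,\widehat{\nu}(z)$, where $\epsilon=1$ if $\ell<k$ and $\epsilon=0$ if $k\le\ell$, and $\widehat{\nu}(z)=\int_{a_{k-1}}^{b_{k-1}}d\nu(\tau)/(z-\tau)$ is the Cauchy transform of the signed measure $d\nu=\psi_{n,k-1}\,d\sigma_{n,k-1}$. Since $\sigma_{n,k-1}$ equals $\sigma_{k-1}^{*}$ or $\tau\,\sigma_{k-1}^{*}$ and $\tau$ keeps a constant sign on $[a_{k-1},b_{k-1}]$, the density of $\nu$ changes sign on $(a_{k-1},b_{k-1})$ exactly at the $j$ simple zeros of $\psi_{n,k-1}$. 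I would then invoke the classical counting fact that \emph{the Cauchy transform of a measure $h\,d\mu$ with $\mu\ge 0$ and $h$ real having exactly $j$ sign changes has at most $j+1$ zeros in $\overline{\mathbb{C}}\setminus[a_{k-1},b_{k-1}]$, the zero at $\infty$ being counted with its order}. Granting this, the estimate \eqref{decayinfpsink} shows that $\widehat{\nu}$ has a zero at infinity of order at least $Z(n,k-1)-Z(n,k)+1$ (in the case $\ell<k$ the stripped factor $z$ supplies the extra unit). Subtracting, $\widehat{\nu}$ has at most $(j+1)-(Z(n,k-1)-Z(n,k)+1)=Z(n,k)$ finite zeros off $[a_{k-1},b_{k-1}]$; since the factor $z^{\epsilon}$ affects only the point $z=0$, the same bound holds for $\psi_{n,k}$ in $\mathbb{C}\setminus([a_{k-1},b_{k-1}]\cup\{0\})$.

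The heart of the matter, and the step I expect to be the main obstacle, is the counting fact. I would prove it by factoring $d\nu=w\,d\lambda$, where $w(z)=\prod_{i=1}^{j}(z-y_i)$ is the monic polynomial vanishing at the sign changes $y_i$, so that $d\lambda=(\psi_{n,k-1}/w)\,d\sigma_{n,k-1}$ has constant sign. Writing $\widehat{\nu}(z)=w(z)\widehat{\lambda}(z)-q(z)$ with $q$ a real polynomial of degree at most $j-1$, the quotient $G:=\widehat{\nu}/w=\widehat{\lambda}-q/w$ is analytic in $\overline{\mathbb{C}}\setminus[a_{k-1},b_{k-1}]$ (the poles of $q/w$ sit at the $y_i\in[a_{k-1},b_{k-1}]$) and shares the zeros of $\widehat{\nu}$ there. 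Because $\lambda$ has constant sign, $\widehat{\lambda}$ is a Herglotz-type function with $\Imag\widehat{\lambda}(z)\neq 0$ off $\mathbb{R}$, and since $q/w$ is real on $\mathbb{R}$ this forces every zero of $G$ to be real, lying in $(-\infty,a_{k-1})\cup(b_{k-1},\infty)$. Bounding the number of such real zeros together with the zero at infinity by $j+1$ is then carried out by the argument principle applied to $G$ along a contour encircling $[a_{k-1},b_{k-1}]$, using that the boundary values $\widehat{\lambda}_{\pm}$ keep $\Imag\widehat{\lambda}$ of a fixed sign on each edge of the cut. Matching this upper bound against the lower bound forces $\psi_{n,k}$ to have exactly $Z(n,k)$ zeros off $[a_{k-1},b_{k-1}]\cup\{0\}$, all simple and all in $(a_k,b_k)$; the case $k=p$ gives $Z(n,p)=0$ and hence no such zeros for $\psi_{n,p}$, completing the induction.
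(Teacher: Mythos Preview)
Your induction scheme, the base case, and the lower bound via Proposition~\ref{prop:oddmult} are exactly what the paper does. The difference is in how the upper bound on the number of zeros of $\psi_{n,k}$ is obtained.

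The paper argues by contradiction and contour integration rather than via a general ``counting fact'' for Cauchy transforms. Assuming $\psi_{n,k}$ had at least $Z(n,k)+1$ zeros in $\mathbb{C}\setminus([a_{k-1},b_{k-1}]\cup\{0\})$, let $P_{n,k}$ be the monic polynomial vanishing at them (real coefficients since nonreal zeros pair up). Then, using \eqref{decayinfpsink} and \eqref{recurrenceforpsipequena}, the function $\psi_{n,k}(z)/(z^{\epsilon}P_{n,k}(z))$ is analytic off $[a_{k-1},b_{k-1}]$ and is $O(z^{-(Z(n,k-1)+2)})$; integrating $z^{s}\psi_{n,k}(z)/(z^{\epsilon}P_{n,k}(z))$ over a small contour around $[a_{k-1},b_{k-1}]$ for $s=0,\dots,Z(n,k-1)$ yields
\[
\int_{a_{k-1}}^{b_{k-1}}\psi_{n,k-1}(\tau)\,\tau^{s}\,\frac{d\sigma_{n,k-1}(\tau)}{P_{n,k}(\tau)}=0,\qquad s=0,\dots,Z(n,k-1).
\]
Since $P_{n,k}$ has constant sign on $[a_{k-1},b_{k-1}]$, this forces $\psi_{n,k-1}$ to have at least $Z(n,k-1)+1$ sign changes in $(a_{k-1},b_{k-1})$, contradicting the inductive hypothesis. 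This is short, self-contained, and avoids any delicate boundary analysis.

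Your route packages the same idea as a standalone lemma (``Cauchy transform of a density with $j$ sign changes has at most $j+1$ zeros in $\overline{\mathbb{C}}\setminus[a,b]$, counting the order at $\infty$''). The statement is correct and your reduction to it is clean, but the sketch you give for proving it is the weakest link: applying the argument principle to $G=\widehat{\lambda}-q/w$ on a contour hugging $[a_{k-1},b_{k-1}]$ requires care, because $q/w$ has poles at the $y_i$ on the cut and the boundary values of $G$ can be unbounded there, so controlling the winding number along the slit is not as immediate as ``$\Imag\widehat{\lambda}_{\pm}$ keeps a sign''. A cleaner proof of your lemma is precisely the paper's device: if there were more zeros, divide them out and contour-integrate to manufacture too many orthogonality relations against the constant-sign measure $d\lambda$. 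So the two approaches are really the same computation organized differently; the paper's organization has the advantage of needing no auxiliary lemma and no boundary-value analysis.
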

\begin{proof}
The proof is by induction on $k$. It was already shown in Proposition
\ref{prop:oddmult} that the polynomial $\psi_{n,0}=\mathcal{Q}_d$ has degree
$Z(n,0)=\frac{n-\ell}{p+1}$, all its zeros are simple and lie in the interval
$(a_{0},b_{0})$.

Let us assume that the result holds for $k-1$, $k\geq 1$, but $\psi_{n,k}$
has at least $Z(n,k)+1$ zeros in
$\mathbb{C}\setminus([a_{k-1},b_{k-1}]\cup\{0\})$, counting multiplicities. Let
$P_{n,k}(z)$ denote the monic polynomial whose zeros are the zeros of
$\psi_{n,k}$ in $\mathbb{C}\setminus([a_{k-1},b_{k-1}]\cup\{0\})$. Since
$\psi_{n,k}(\overline{z})=\overline{\psi_{n,k}(z)}$, the complex zeros of
$\psi_{n,k}$, if any, must come in conjugate pairs, so $P_{n,k}$ is a
polynomial with real coefficients with $\deg (P_{n,k})\geq Z(n,k)+1$.

Let us momentarily set $\mathcal{P}_{n,k}:=zP_{n,k}$ if $\ell<k$, and $\mathcal{P}_{n,k}:=P_{n,k}$ otherwise. Then, it follows from \eqref{decayinfpsink} that
\begin{equation}\label{estimateinf:1}
\frac{\psi_{n,k}(z)}{\mathcal{P}_{n,k}(z)}=O\Big(\frac{1}{z^{Z(n,k-1)+2}}\Big),\qquad
z\rightarrow\infty,
\end{equation}
and this function is analytic outside $[a_{k-1},b_{k-1}]$. Let $\gamma$ be a
closed Jordan curve that surrounds $[a_{k-1},b_{k-1}]$ and leaves the zeros of
$P_{n,k}$ outside.
Then, it follows from \eqref{estimateinf:1} and \eqref{recurrenceforpsipequena} that for $j=0,\ldots,Z(n,k-1)$, we
have
\begin{align}\label{orthcondPnk}
0 & =\frac{1}{2\pi i}\int_{\gamma}\frac{z^{j}\,\psi_{n,k}(z)}{\mathcal{P}       _{n,k}(z)}\,dz
=\int_{a_{k-1}}^{b_{k-1}}\psi_{n,k-1}(\tau)\,\tau^{j}\,\frac{d\sigma_{n,k-1}
(\tau)}{P_{n,k}(\tau)},
\end{align}
where we applied Cauchy's theorem, Cauchy's integral formula and Fubini's
theorem. The above orthogonality conditions of $\psi_{n,k-1}$ with respect to
the measure $\frac{d\sigma_{n,k-1}(\tau)}{P_{n,k}(\tau)}$ imply that
 $\psi_{n,k-1}$ has at least $Z(n,k-1)+1$ zeros in $(a_{k-1},b_{k-1})$,
contrary to our initial hypothesis.

Thus, the function $\psi_{n,k}$ has at most $Z(n,k)$ zeros in
$\mathbb{C}\setminus([a_{k-1},b_{k-1}]\cup\{0\})$. This together with
Proposition \ref{prop:oddmult} gives the result.
\end{proof}

For the asymptotic analysis that will be performed later it is crucial to
consider the polynomials whose zeros coincide with those of the functions
$\psi_{n,k}$. We introduce now a notation for these polynomials.

\begin{definition}
For any integers $n\geq 0$ and $k$ with $0\leq k\leq p-1$, let $P_{n,k}$ denote
the monic polynomial whose zeros are the zeros of $\psi_{n,k}$ in
$(a_{k},b_{k})$. For convenience we also define the polynomials $P_{n,-1}\equiv
1$, $P_{n,p}\equiv 1$.
\end{definition}

Hence by Proposition \ref{prop:exactnumberzeros} we know that $P_{n,k}$ has
degree $Z(n,k)$ and all its zeros are simple. Note that $P_{n,0}=\psi_{n,0}$.

\begin{proposition}
Let $0\leq k\leq p-1$. Then,
the function $\psi_{n,k}$ satisfies the following orthogonality conditions:
\begin{equation}\label{varyorthog:psink:1}
\int_{a_{k}}^{b_{k}}\psi_{n,k}(\tau)\,\tau^{s}\,\frac{d\sigma_{n,k}(\tau)}{P_{n,
k+1}(\tau)}=0,\qquad s=0,\ldots,Z(n,k)-1.
\end{equation}
\end{proposition}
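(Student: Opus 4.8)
The plan is to promote the $Z(n,k)-Z(n,k+1)$ plain orthogonality relations \eqref{orthogredPsink2} to the full set of $Z(n,k)$ relations by inserting the factor $1/P_{n,k+1}$ into the measure, exploiting the fact, recorded in Proposition \ref{prop:exactnumberzeros}, that the zeros of $P_{n,k+1}$ are \emph{exactly} the zeros of $\psi_{n,k+1}$ off $[a_k,b_k]\cup\{0\}$. The unifying device is the Cauchy transform
\[
C(z):=\int_{a_k}^{b_k}\frac{\psi_{n,k}(\tau)}{z-\tau}\,d\sigma_{n,k}(\tau),
\]
which by \eqref{recurrenceforpsipequena} equals $\psi_{n,k+1}(z)$ when $k+1\le\ell$ and $\psi_{n,k+1}(z)/z$ when $\ell<k+1$; in either case $C$ is analytic in $\mathbb{C}\setminus[a_k,b_k]$. (When $k=p-1$ we invoke the convention $P_{n,p}\equiv 1$, and the statement degenerates into \eqref{orthogredPsink2} itself, so the argument remains uniform.)

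First I would verify that $C/P_{n,k+1}$ is analytic in $\mathbb{C}\setminus[a_k,b_k]$. By Proposition \ref{prop:exactnumberzeros}, the only zeros of $\psi_{n,k+1}$ in $\mathbb{C}\setminus([a_k,b_k]\cup\{0\})$ are the $Z(n,k+1)$ simple zeros in $(a_{k+1},b_{k+1})$ collected by $P_{n,k+1}$; since these lie in the open interval they are nonzero, so $C$ vanishes there as well and the poles of $1/P_{n,k+1}$ are removable. Next I would pin down the decay: combining the estimate \eqref{decayinfpsink} with $\deg P_{n,k+1}=Z(n,k+1)$, one finds in both cases that $C(z)=O\!\left(z^{-(Z(n,k)-Z(n,k+1))-1}\right)$ as $z\to\infty$, the factor $1/z$ present when $\ell<k+1$ compensating exactly for the extra $+1$ appearing in $N(n,k+1)$ when $k+1\le\ell$; hence
\[
\frac{C(z)}{P_{n,k+1}(z)}=O\!\left(\frac{1}{z^{Z(n,k)+1}}\right),\qquad z\to\infty.
\]
This collapse of the two parity cases to a single decay order is the heart of the matter.

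Finally I would run the residue argument already used in the proof of Proposition \ref{prop:exactnumberzeros}. Choosing a closed Jordan curve $\gamma$ surrounding $[a_k,b_k]$ that leaves the zeros of $P_{n,k+1}$ (which lie in $(a_{k+1},b_{k+1})$, disjoint from $[a_k,b_k]$) in its exterior, the decay just obtained gives, for $0\le j\le Z(n,k)-1$,
\[
\frac{1}{2\pi i}\oint_{\gamma}\frac{z^{j}\,C(z)}{P_{n,k+1}(z)}\,dz=0,
\]
since there $z^{j}C(z)/P_{n,k+1}(z)=O(z^{-2})$ and the integrand is analytic outside $\gamma$. On the other hand, substituting the integral representation of $C$, interchanging the order of integration by Fubini, and evaluating the inner integral by the residue theorem (the only pole inside $\gamma$ being at $z=\tau$, as $P_{n,k+1}$ has no zeros there) yields
\[
\frac{1}{2\pi i}\oint_{\gamma}\frac{z^{j}\,C(z)}{P_{n,k+1}(z)}\,dz=\int_{a_k}^{b_k}\psi_{n,k}(\tau)\,\tau^{j}\,\frac{d\sigma_{n,k}(\tau)}{P_{n,k+1}(\tau)}.
\]
Equating the two displays gives \eqref{varyorthog:psink:1}.

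The step I expect to demand the most care is the analytic bookkeeping behind $C/P_{n,k+1}$: its analyticity rests entirely on the exact zero count of Proposition \ref{prop:exactnumberzeros} (so that $P_{n,k+1}$ captures every zero of $\psi_{n,k+1}$ off $[a_k,b_k]\cup\{0\}$ and no spurious pole survives), while the decisive decay bound requires fusing the two cases $\ell<k+1$ and $k+1\le\ell$ into one order $z^{-Z(n,k)-1}$. Once these two facts are secured, the contour computation and the passage from the vanishing integral to the claimed moment conditions are routine.
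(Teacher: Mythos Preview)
Your argument is correct and is essentially the approach the paper itself indicates: the paper's proof merely says that for $0\le k\le p-2$ the result ``follows immediately from the definition of the polynomials $P_{n,k}$ and the argument given in the proof of Proposition \ref{prop:exactnumberzeros}'', i.e., one reruns that contour argument with index shifted to $k+1$ and with the now-known exact degree $\deg P_{n,k+1}=Z(n,k+1)$, which is precisely what your Cauchy transform $C(z)$ computation does (your $C$ equals the paper's $\psi_{n,k+1}/(zP_{n,k+1})$ or $\psi_{n,k+1}/P_{n,k+1}$ up to the factor $P_{n,k+1}$). Your handling of the boundary case $k=p-1$ via the convention $P_{n,p}\equiv1$ matches the paper's separate remark that this case reduces to \eqref{orthogredPsink}/\eqref{orthogredPsink2}.
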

\begin{proof}
For $0\leq k\leq p-2$, these orthogonality conditions are just \eqref{orthcondPnk}. For
$k=p-1$, \eqref{varyorthog:psink:1} follows from \eqref{orthogredPsink}
and \eqref{eq:Znk}, since $\lc\frac{\ell-(p-1)}{p+1}\rc=0$ if $\ell\leq p-1$
and $\lc\frac{\ell-(p-1)}{p+1}\rc=1$ if $\ell=p$.
\end{proof}

\begin{corollary}\label{cor:zeros}
Let $0\leq k\leq p-1$, and let $I$ be any connected component of
$[a_{k},b_{k}]\setminus \supp(\sigma_{k}^{*})$. Then the polynomial $P_{n,k}$
has at most one zero in the closure of $I$.
\end{corollary}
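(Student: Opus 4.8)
The plan is to argue by contradiction using the orthogonality relations \eqref{varyorthog:psink:1}, in the spirit of the classical fact that an orthogonal polynomial has at most one zero in each gap of the support of its orthogonality measure. If $Z(n,k)\le 1$ there is nothing to prove, so I would assume that $P_{n,k}$ has two distinct zeros $x_1<x_2$ in $\overline{I}$; by Proposition \ref{prop:exactnumberzeros} these are simple and $Z(n,k)=\deg P_{n,k}\ge 2$. I then set
\[
\pi(\tau):=\frac{P_{n,k}(\tau)}{(\tau-x_1)(\tau-x_2)},
\]
which is a genuine polynomial of degree $Z(n,k)-2\le Z(n,k)-1$, hence admissible as a test function in \eqref{varyorthog:psink:1}. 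This forces
\[
\int_{a_k}^{b_k}\psi_{n,k}(\tau)\,\pi(\tau)\,\frac{d\sigma_{n,k}(\tau)}{P_{n,k+1}(\tau)}=0,
\]
and the entire point will be to show that this integral is actually nonzero.

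To compute the sign of the integrand, first note that $\psi_{n,k}$ is analytic and real-valued on $(a_k,b_k)$, and by Proposition \ref{prop:exactnumberzeros} its only zeros there are exactly the simple zeros of $P_{n,k}$; moreover $0\notin(a_k,b_k)$, since $[a_k,b_k]\subset[0,\infty)$ for even $k$ and $[a_k,b_k]\subset(-\infty,0]$ for odd $k$. Therefore $g:=\psi_{n,k}/P_{n,k}$ is analytic and nowhere vanishing on $(a_k,b_k)$, and so of constant sign there. Using $P_{n,k}=(\tau-x_1)(\tau-x_2)\pi$ I obtain the pointwise identity
\[
\psi_{n,k}(\tau)\,\pi(\tau)=g(\tau)\,(\tau-x_1)(\tau-x_2)\,\pi(\tau)^2 .
\]
I would then check that the factor $d\sigma_{n,k}/P_{n,k+1}$ is sign-definite on $[a_k,b_k]$: the zeros of $P_{n,k+1}$ lie in $(a_{k+1},b_{k+1})$, which is disjoint from $[a_k,b_k]$, so $P_{n,k+1}$ keeps a constant sign there, while by \eqref{varymeas:sigma} the measure $\sigma_{n,k}$ is either $\sigma_k^*$ or $\tau\,\sigma_k^*$, both sign-definite on $[a_k,b_k]$ because $\tau$ has constant sign on that interval. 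Hence, up to one global sign, $d\sigma_{n,k}/P_{n,k+1}$ is a positive measure supported on $\supp(\sigma_k^*)$.

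It remains to combine the signs. Writing $\overline{I}=[\alpha,\beta]$, every $\tau\in\supp(\sigma_k^*)$ satisfies $\tau\le\alpha\le x_1$ or $\tau\ge\beta\ge x_2$ (since the open gap $I$ contains no support and $x_1,x_2\in[\alpha,\beta]$), so $(\tau-x_1)(\tau-x_2)\ge 0$, with equality only when $\tau$ equals $x_1$ or $x_2$, which can happen only at the endpoints $\alpha,\beta$. Consequently the integrand $g(\tau)(\tau-x_1)(\tau-x_2)\pi(\tau)^2\,d\sigma_{n,k}/P_{n,k+1}$ is of one fixed sign on $\supp(\sigma_k^*)$ and strictly of that sign off the finitely many support points where $\tau\in\{x_1,x_2\}$ or $\pi(\tau)=0$. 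Since $\sigma_k^*$ has infinitely many points in its support, the integral is strictly nonzero, contradicting its vanishing. Hence $P_{n,k}$ has at most one zero in $\overline{I}$.

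The main obstacle, and the reason the statement concerns the closed interval $\overline{I}$ rather than the open gap, is the endpoint case where a zero $x_i$ of $P_{n,k}$ sits at a boundary point $\alpha$ or $\beta$ of the gap that belongs to $\supp(\sigma_k^*)$. This turns out to be harmless because $\pi$ is an honest polynomial, so $\psi_{n,k}\pi$ stays bounded on $[a_k,b_k]$ and no singularity arises; the factor $(\tau-x_1)(\tau-x_2)$ merely vanishes at that single endpoint while remaining strictly positive on the infinitely many other support points, which is exactly what makes the integral nonzero.
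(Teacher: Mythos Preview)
Your argument is correct and follows essentially the same route as the paper's own proof: both proceed by contradiction, take the test polynomial $P_{n,k}(\tau)/((\tau-x_1)(\tau-x_2))$ in the orthogonality relation \eqref{varyorthog:psink:1}, and then observe that the resulting integrand has constant sign on $\supp(\sigma_k^*)$ with only finitely many zeros there, forcing the integral to be nonzero. Your write-up is somewhat more explicit in decomposing the integrand as $g(\tau)(\tau-x_1)(\tau-x_2)\pi(\tau)^2$ and tracking each sign separately, but the underlying idea is identical.
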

\begin{proof}
Suppose that $P_{n,k}$ has two distinct zeros $\tau_{1}$ and $\tau_{2}$ in
$\overline{I}$ and assume that $\ell\leq k$ (the case $k<\ell$ follows along the same lines). Set $L_{n,k}(\tau):=\frac{P_{n,k}(\tau)}{(\tau-\tau_{1})(\tau-\tau_{2})}$. Then,  according to
\eqref{varyorthog:psink:1} we have
\begin{equation}\label{eq:zerocont}
\int_{a_{k}}^{b_{k}}\psi_{n,k}(\tau)\,L_{n,k}(\tau)\frac{d\sigma_{k}^{*}(\tau)}{P_{n,k+1}(\tau)}=0,
\end{equation}
since $L_{n,k}$ is a polynomial of degree $Z(n,k)-2$. On the other hand, the function $\psi_{n,k}L_{n,k}$ has
constant sign and finitely many zeros on $\supp(\sigma_{k}^{*})$, therefore its
integral with respect to the measure $d\sigma_{k}^{*}(\tau)/P_{n,k+1}(\tau)$
should be different from zero, contradicting \eqref{eq:zerocont}.
\end{proof}

\subsection{The auxiliary functions $H_{n,k}$}

We now introduce certain functions that will play an important role
in the analysis that will follow.

\begin{definition}\label{defHnk}For integers $n\geq 0$ and $0\leq k\leq p$,
set
\begin{equation}\label{eq:def:Hnk}
H_{n,k}(z):=\frac{P_{n,k-1}(z)\,\psi_{n,k}(z)}{P_{n,k}(z)}.
\end{equation}
\end{definition}

Note that $H_{n,0}\equiv 1$. Since the zeros of $P_{n,k}$ are zeros of
$\psi_{n,k}$ outside $[a_{k-1},b_{k-1}]$, we have
\[
H_{n,k}\in\mathcal{H}(\mathbb{C}\setminus[a_{k-1},b_{k-1}]),\quad 1\leq k\leq
p.
\]

Putting together \eqref{varymeas:sigma}, \eqref{varyorthog:psink:1}, and \eqref{eq:def:Hnk},
 we readily obtain
the following result.

\begin{proposition}\label{orthoPnk}
For any $k=0,\ldots,p-1$, the polynomial $P_{n,k}$ satisfies the following
orthogonality conditions:
\begin{equation}\label{varyorthog:Pnk}
\int_{a_{k}}^{b_{k}}P_{n,k}(\tau)\,\tau^{s}\,\frac{H_{n,k}(\tau)\,d\sigma_{n,k}
(\tau)}{P_{n,k-1}(\tau)\,P_{n,k+1}(\tau)}=0,\qquad s=0,\ldots,Z(n,k)-1.
\end{equation}
Recall that $P_{n,-1}, P_{n,p}\equiv 1$.
\end{proposition}

We prove now a formula analogous to \eqref{recurrenceforpsipequena} for the
functions $H_{n,k}$.

\begin{proposition}
Let $1\leq k\leq p$ and $n\equiv \ell \mod(p+1)$, $0\leq \ell\leq p$. Then,
\begin{equation}\label{intrep:Hnk}
H_{n,k}(z)=\begin{cases}
z\int_{a_{k-1}}^{b_{k-1}}\frac{P_{n,k-1}^{2}(\tau)}{z-\tau}\,\frac{H_{n,k-1}
(\tau)\,d\sigma_{n,k-1}(\tau)}{P_{n,k-2}(\tau)\,P_{n,k}(\tau)}, & \ell<k,\\[1em]
\int_{a_{k-1}}^{b_{k-1}}\frac{P_{n,k-1}^{2}(\tau)}{z-\tau}\,\frac{H_{n,k-1}
(\tau)\,d\sigma_{n,k-1}(\tau)}{P_{n,k-2}(\tau)\,P_{n,k}(\tau)}, &
k\leq \ell.\\
\end{cases}
\end{equation}
\end{proposition}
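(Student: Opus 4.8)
The plan is to reduce \eqref{intrep:Hnk} to the recurrence \eqref{recurrenceforpsipequena} already established for the functions $\psi_{n,k}$, the only genuine difficulty being that the claimed formula carries the rational factor $P_{n,k-1}/P_{n,k}$ \emph{inside} the integral, whereas a direct substitution produces it outside. Writing $R(z):=P_{n,k-1}(z)/P_{n,k}(z)$ and using $H_{n,k-1}=P_{n,k-2}\,\psi_{n,k-1}/P_{n,k-1}$ (the case $k=1$ being covered by $H_{n,0}\equiv1$, $P_{n,-1}\equiv1$), the kernel appearing on the right-hand side of \eqref{intrep:Hnk} simplifies at once:
\[
\frac{P_{n,k-1}^{2}(\tau)\,H_{n,k-1}(\tau)}{P_{n,k-2}(\tau)\,P_{n,k}(\tau)}=R(\tau)\,\psi_{n,k-1}(\tau).
\]
On the other hand $H_{n,k}(z)=R(z)\,\psi_{n,k}(z)$ by \eqref{eq:def:Hnk}, and \eqref{recurrenceforpsipequena} expresses $\psi_{n,k}(z)$ as $z\int \psi_{n,k-1}(\tau)(z-\tau)^{-1}\,d\sigma_{n,k-1}(\tau)$ when $\ell<k$, and without the prefactor $z$ when $k\le\ell$. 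Hence, after cancelling the common factor $z$ present in the case $\ell<k$, both statements of \eqref{intrep:Hnk} become equivalent to the single identity
\[
\int_{a_{k-1}}^{b_{k-1}}\frac{R(\tau)\,\psi_{n,k-1}(\tau)}{z-\tau}\,d\sigma_{n,k-1}(\tau)=R(z)\int_{a_{k-1}}^{b_{k-1}}\frac{\psi_{n,k-1}(\tau)}{z-\tau}\,d\sigma_{n,k-1}(\tau),
\]
i.e.\ to showing that $R$ may be pulled out of the Cauchy-type integral. This is the heart of the matter.

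To prove this identity I would study the difference $\int [R(\tau)-R(z)](z-\tau)^{-1}\psi_{n,k-1}\,d\sigma_{n,k-1}$ and show it vanishes. Since $P_{n,k}$ has only simple zeros, all lying in $(a_k,b_k)$ and hence off $[a_{k-1},b_{k-1}]$ (by Proposition \ref{prop:exactnumberzeros} together with $\Gamma_{k-1}\cap\Gamma_k=\emptyset$, so $R$ is finite on the contour of integration), partial fractions give $R(\tau)=Q(\tau)+\sum_i c_i/(\tau-\xi_i)$, where $\{\xi_i\}$ are the zeros of $P_{n,k}$ and $\deg Q=Z(n,k-1)-Z(n,k)$. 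A short computation then yields
\[
\frac{R(\tau)-R(z)}{z-\tau}=-\frac{Q(\tau)-Q(z)}{\tau-z}+\sum_i\frac{c_i}{z-\xi_i}\,\frac{1}{\tau-\xi_i},
\]
whose first term is a polynomial in $\tau$ of degree $Z(n,k-1)-Z(n,k)-1$. Integrating against $\psi_{n,k-1}\,d\sigma_{n,k-1}$, the polynomial term is annihilated by the orthogonality \eqref{orthogredPsink2} written for the index $k-1$, which supplies exactly the conditions $0\le s\le Z(n,k-1)-Z(n,k)-1$; meanwhile each term of the sum produces $\int \psi_{n,k-1}(\tau)(\tau-\xi_i)^{-1}\,d\sigma_{n,k-1}(\tau)$, which by \eqref{recurrenceforpsipequena} equals $-\psi_{n,k}(\xi_i)/\xi_i$ (resp.\ $-\psi_{n,k}(\xi_i)$) and therefore vanishes because $\xi_i$ is a zero of $\psi_{n,k}$ (note $\xi_i\ne0$, as $0\notin(a_k,b_k)$). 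Thus the difference is zero and the identity holds.

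The main obstacle is precisely the degree bookkeeping in this last step: one must match the degree $Z(n,k-1)-Z(n,k)-1$ of the polynomial part of the divided difference with the exact number of orthogonality conditions satisfied by $\psi_{n,k-1}$ against the bare measure $d\sigma_{n,k-1}$ in \eqref{orthogredPsink2}, and simultaneously recognize that the simple poles of $R$ contribute nothing because the relevant Cauchy transform of $\psi_{n,k-1}$ vanishes exactly at the zeros of $P_{n,k}$. Once this is in place, the two cases $\ell<k$ and $k\le\ell$ follow uniformly, with the endpoint conventions $Z(n,p)=0$, $P_{n,p}\equiv1$ handling $k=p$ (empty pole sum) and $P_{n,-1}\equiv1$ handling $k=1$.
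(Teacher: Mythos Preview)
Your argument is correct. The reduction to the single identity
\[
\int_{a_{k-1}}^{b_{k-1}}\frac{R(\tau)\,\psi_{n,k-1}(\tau)}{z-\tau}\,d\sigma_{n,k-1}(\tau)=R(z)\int_{a_{k-1}}^{b_{k-1}}\frac{\psi_{n,k-1}(\tau)}{z-\tau}\,d\sigma_{n,k-1}(\tau)
\]
is exactly right, and your partial-fraction treatment of the difference works: the polynomial part of $(R(\tau)-R(z))/(z-\tau)$ has degree $Z(n,k-1)-Z(n,k)-1$ in $\tau$, which is precisely covered by \eqref{orthogredPsink2} at index $k-1$; each pole term is a constant multiple of $\int\psi_{n,k-1}(\tau)(\tau-\xi_i)^{-1}\,d\sigma_{n,k-1}(\tau)$, which by \eqref{recurrenceforpsipequena} equals $-\psi_{n,k}(\xi_i)$ or $-\psi_{n,k}(\xi_i)/\xi_i$ and hence vanishes. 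The boundary cases $k=1$ and $k=p$ are handled by the conventions you cite.

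The paper reaches the same conclusion by a slightly different route that avoids partial fractions altogether. It invokes the orthogonality of $P_{n,k-1}$ against the full varying measure $H_{n,k-1}\,d\sigma_{n,k-1}/(P_{n,k-2}P_{n,k})$ from Proposition~\ref{orthoPnk}, which gives $\int\frac{Q(z)-Q(\tau)}{z-\tau}P_{n,k-1}(\tau)\,d(\cdots)=0$ for every polynomial $Q$ of degree at most $Z(n,k-1)$. Two choices of $Q$ then suffice: $Q=P_{n,k-1}$ produces the $P_{n,k-1}^2$ in the numerator, while $Q=P_{n,k}$ cancels the $P_{n,k}$ in the denominator of the measure and factors out $1/P_{n,k}(z)$, yielding the raw integral $\int\psi_{n,k-1}(\tau)(z-\tau)^{-1}\,d\sigma_{n,k-1}(\tau)$ directly. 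In effect the paper's two polynomial substitutions package together what you split into a polynomial-part orthogonality and a pole-evaluation step; its approach is more streamlined and never needs to touch the individual zeros $\xi_i$, while yours is more explicit about why each piece of $R$ disappears and relies only on \eqref{orthogredPsink2} rather than the (equivalent but more processed) statement of Proposition~\ref{orthoPnk}.
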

\begin{proof}
We know by \eqref{varyorthog:Pnk} that for any polynomial $Q$ with $\deg(Q)\leq
Z(n,k-1)$, $1\leq k\leq p$, we have
\begin{equation}\label{eq:orthog:Q}
\int_{a_{k-1}}^{b_{k-1}}\frac{Q(z)-Q(\tau)}{z-\tau}\,P_{n,k-1}(\tau)\,\frac{H_{n
,k-1}(\tau)\,d\sigma_{n,k-1}(\tau)}{P_{n,k-2}(\tau)\,P_{n,k}(\tau)}=0.
\end{equation}
If we take in particular $Q=P_{n,k-1}$ in \eqref{eq:orthog:Q}, then we obtain
\[
P_{n,k-1}(z)\int_{a_{k-1}}^{b_{k-1}}\frac{P_{n,k-1}(\tau)H_{n,
k-1}(\tau)\,d\sigma_{n,k-1}(\tau)}{(z-\tau)P_{n,k-2}(\tau)\,P_{n,k}(\tau)}=\int_{a_{k-1}
}^{b_{k-1}}\frac{P_{n,k-1}^{2}(\tau)H_{n,k-1}(\tau)\,d\sigma_{n
,k-1}(\tau)}{(z-\tau)P_{n,k-2}(\tau)\,P_{n,k}(\tau)}.
\]

Since $Z(n,k)\leq Z(n,k-1)$, we can also apply \eqref{eq:orthog:Q} for $Q=P_{n,k}$, which together with this last equality yields that for $k=1,\ldots,p,$
\begin{equation}\label{eqaux1}
\frac{1}{P_{n,k}(z)}\int_{a_{k-1}}^{b_{k-1}}\frac{\psi_{n,k-1}(\tau)}{z-\tau}\,
d\sigma_{n,k-1}(\tau)=\frac{1}{P_{n,k-1}(z)}\,\int_{a_{k-1}}^{b_{k-1}}\frac{P_{n
,k-1}^{2}(\tau)}{z-\tau}\,\frac{H_{n,k-1}(\tau)\,d\sigma_{n,k-1}(\tau)}{P_{n,k-2
}(\tau)\,P_{n,k}(\tau)},
\end{equation}
and  the result follows from \eqref{eqaux1}, \eqref{recurrenceforpsipequena} and \eqref{eq:def:Hnk}.
\end{proof}

In what follows, we shall use the notation $\sign(f,I)$ to mean the sign of the
function $f$ on the interval $I$, and $\Delta_k$ shall denote the open interval
$(a_k,b_k)$.
\begin{corollary}
Let $1\leq k\leq p-1$ and $n\equiv \ell \mod(p+1)$, $0\leq \ell\leq p$. Then,
with the convention that $Z(n,-1)=0$, we have
\begin{equation}\label{signHnk}
\sign(H_{n,k},\Delta_k)=\begin{cases}
(-1)^{(k+1)[Z(n,k-2)-Z(n,k)]}\sign(H_{n,k-1},\Delta_{k-1}),\ & \ell<k,\\[0.5em]
(-1)^{1+(k+1)[Z(n,k-2)-Z(n,k)]}\sign(H_{n,k-1},\Delta_{k-1}),\ & k\leq \ell.
\end{cases}
\end{equation}
\end{corollary}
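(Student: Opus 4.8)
The plan is to extract the sign relation from the integral representation \eqref{intrep:Hnk} by evaluating both sides on the interval $\Delta_k=(a_k,b_k)$ and carefully tracking the sign of every factor appearing there. The starting point is the observation that for $z\in\Delta_k$ the left-hand side $H_{n,k}(z)$ is real-valued, so the right-hand side integral must also be real, and its sign is what we want to compute.

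First I would analyze the integrand on the right side of \eqref{intrep:Hnk}. For $z=x\in\Delta_k$ and $\tau\in[a_{k-1},b_{k-1}]=\overline{\Delta_{k-1}}$, the intervals $\Delta_{k-1}$ and $\Delta_k$ are disjoint (consecutive stars do not meet), so the factor $1/(x-\tau)$ has a constant sign on $\Delta_{k-1}$; by the geometry of the star construction (even/odd indices placing $\Gamma_{k-1},\Gamma_k$ on opposite sides), this sign is determined and contributes a fixed power of $-1$. The factor $P_{n,k-1}^2(\tau)$ is nonnegative, and the measure $d\sigma_{n,k-1}$ is positive (being $d\sigma_{k-1}^*$ or $\tau\,d\sigma_{k-1}^*$, with $\sigma_{k-1}^*$ positive). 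The remaining factor $H_{n,k-1}(\tau)/[P_{n,k-2}(\tau)P_{n,k}(\tau)]$ is the crux: its sign on $\Delta_{k-1}$ is governed by $\sign(H_{n,k-1},\Delta_{k-1})$ together with the signs of $P_{n,k-2}$ and $P_{n,k}$ evaluated on $\Delta_{k-1}$.

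The key step is therefore to determine $\sign(P_{n,k-2},\Delta_{k-1})$ and $\sign(P_{n,k},\Delta_{k-1})$. Here I would use that $P_{n,k}$ has all its $Z(n,k)$ zeros in $\Delta_k$ (Proposition \ref{prop:exactnumberzeros}) and none in $\Delta_{k-1}$; since $P_{n,k}$ is monic with real coefficients and $\Delta_{k-1}$ lies entirely to one side of $\Delta_k$ in the ordering of the real segments $[a_j,b_j]$, the sign of $P_{n,k}$ on $\Delta_{k-1}$ is $(-1)^{Z(n,k)}$ (each of the $Z(n,k)$ real zeros lies on the far side). Similarly $P_{n,k-2}$ has its zeros in $\Delta_{k-2}$, and on $\Delta_{k-1}$ its sign is $(-1)^{Z(n,k-2)}$ by the analogous argument, but I must verify the orientation/ordering of $\Delta_{k-2}$ relative to $\Delta_{k-1}$. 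Combining, the accumulated sign from these two polynomials is $(-1)^{Z(n,k-2)+Z(n,k)}=(-1)^{Z(n,k-2)-Z(n,k)}$, which matches the exponent appearing in \eqref{signHnk}; the extra multiplicative factor $(-1)^{k+1}$ should emerge from the geometric sign of $1/(x-\tau)$ together with which star carries the positive or negative reals, and the additional $(-1)^1$ in the case $k\leq\ell$ comes precisely from the absence of the leading power of $z$ in that branch of \eqref{intrep:Hnk}, changing the behavior as one passes from the representation to a definite-sign statement.

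The main obstacle I anticipate is bookkeeping the geometric signs correctly — specifically, pinning down the sign of $1/(x-\tau)$ for $x\in\Delta_k$, $\tau\in\Delta_{k-1}$, and the sign of a monic real polynomial with all zeros on one fixed side of the evaluation interval, while consistently accounting for the fact that consecutive segments $[a_{k-1},b_{k-1}]$ and $[a_k,b_k]$ lie on opposite rays (positive versus negative reals after the $z\mapsto z^{p+1}$ reduction). Getting the parity exactly right, and explaining why the $\ell<k$ versus $k\leq\ell$ dichotomy produces exactly the stated offset, will require the most care; once those elementary sign counts are settled, reading off \eqref{signHnk} from \eqref{intrep:Hnk} is immediate.
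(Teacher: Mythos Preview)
Your approach is exactly the paper's: read off the sign from the integral representation \eqref{intrep:Hnk} by tracking the sign of each factor, and in fact the paper organizes the computation by splitting on the parity of $k$ (which governs whether $\Delta_{k-1}\subset(0,\infty)$ or $\Delta_{k-1}\subset(-\infty,0)$). Two small slips in your sketch to correct before executing: first, $d\sigma_{n,k-1}$ is \emph{not} always a positive measure, since when $k-1<\ell$ it equals $\tau\,d\sigma_{k-1}^*(\tau)$ and $\tau$ is negative on $\Delta_{k-1}$ if $k-1$ is odd; second, the sign of the monic polynomial $P_{n,k}$ on $\Delta_{k-1}$ is $(-1)^{Z(n,k)}$ only when $\Delta_{k-1}$ lies to the \emph{left} of $\Delta_k$ (i.e.\ $k$ even), whereas for $k$ odd the interval $\Delta_{k-1}\subset(0,\infty)$ lies to the right of all zeros and $P_{n,k}$ is positive there --- this parity distinction is precisely what produces the factor $(k+1)$ in the exponent.
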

\begin{proof}
Suppose first that $k\leq \ell$. Then, by \eqref{intrep:Hnk} and
\eqref{varymeas:sigma},
\[
H_{n,k}(z)=\int_{\Delta_{k-1}}\frac{P_{n,k-1}^{2}(\tau)}{z-\tau}\,\frac{H_{n,k-1
}(\tau)\tau d\sigma^*_{k-1}(\tau)}{P_{n,k-2}(\tau)\,P_{n,k}(\tau)}.
\]
If $k$ is even, $\Delta_{k-1}$ lies in $(-\infty,0]$, while $\Delta_{k-2}$ and
$\Delta_{k}$ lie in $[0,\infty)$. Since the monic polynomials $P_{n,k-2}$,
$P_{n,k}$ have their zeros in $\Delta_{k-2}$, $\Delta_{k}$, respectively, and
$\deg(P_{n,k})=Z(n,k)$, the above equality gives
\begin{align*}
\sign(H_{n,k},\Delta_k)=(-1)^{Z(n,k-2)-Z(n,k)+1}\,\sign(H_{n,k-1},\Delta_{k-1}).
\end{align*}
If $k$ is odd, $\Delta_{k-1}$ lies in $[0,\infty)$, $P_{n,k-2}$ and $P_{n,k}$
are both positive in $\Delta_{k-1}$, so that
\begin{align*}
\sign(H_{n,k},\Delta_k)=-\sign(H_{n,k-1},\Delta_{k-1}).
\end{align*}
Suppose now that $\ell<k$. Then, by \eqref{intrep:Hnk} and
\eqref{varymeas:sigma},
\begin{align*}
H_{n,k}(z)=z\int_{\Delta_{k-1}}\frac{P_{n,k-1}^{2}(\tau)}{z-\tau}\,\frac{H_{n,
k-1}(\tau)\,d\sigma^*_{k-1}(\tau)}{P_{n,k-2}(\tau)\,P_{n,k}(\tau)},
\end{align*}
so that if $k$ is even, $\sign(H_{n,k},\Delta_k)=(-1)^{Z(n,k-2)-Z(n,k)}\,\sign(H_{n,k-1},\Delta_{k-1})$, 
while for $k$ odd, $\sign(H_{n,k},\Delta_k)=\sign(H_{n,k-1},\Delta_{k-1})$.
\end{proof}
\section{Recurrence relation and positivity of the recurrence coefficients}\label{sectionrecurrence}

\begin{proposition}\label{prop:recurrencerelationforQn}
The polynomials $Q_{n}$ satisfy the following three-term recurrence relation of
order $p+1$:
\begin{equation}\label{threetermrec}
z Q_{n}(z)=Q_{n+1}(z)+a_{n}\,Q_{n-p}(z),\qquad n\geq p,\qquad
a_{n}\in\mathbb{R},
\end{equation}
where
\begin{equation}\label{eq:firstQns}
Q_{\ell}(z)=z^{\ell},\qquad \ell=0,\ldots,p.
\end{equation}
\end{proposition}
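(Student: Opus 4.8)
The plan is to introduce the auxiliary polynomial $R_n(z):=zQ_n(z)-Q_{n+1}(z)$ and to show, first by rotational symmetry that $\deg R_n\le n-p$, and then by an orthogonality computation combined with the normality proved in Proposition~\ref{prop:oddmult}, that $R_n$ must be a constant multiple of $Q_{n-p}$. Before that, the initial conditions \eqref{eq:firstQns} are disposed of directly: putting $f(z)=z^\ell$ and $k=0$ in \eqref{symm:intF:1} gives $(\omega^\ell-\omega^j)\int_{\Gamma_0}z^\ell\,ds_j(z)=0$, so $z^\ell$ satisfies the orthogonality relations \eqref{orthog:Qn} required of $Q_\ell$ for $0\le\ell\le p$; being monic of degree $\ell$ and the system being normal, $Q_\ell=z^\ell$.

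For the recurrence proper, I would first invoke normality to write the symmetry \eqref{symm:Qn} as $Q_n(\omega z)=\omega^nQ_n(z)$. Since $zQ_n$ and $Q_{n+1}$ are both monic of degree $n+1$, their difference $R_n$ has degree at most $n$; and the substitution $R_n(\omega z)=\omega z\cdot\omega^nQ_n(z)-\omega^{n+1}Q_{n+1}(z)=\omega^{n+1}R_n(z)$ shows that every monomial $z^k$ occurring in $R_n$ has $k\equiv n+1\pmod{p+1}$. As the largest such exponent not exceeding $n$ is $n+1-(p+1)=n-p$, this pins down $\deg R_n\le n-p$. It is exactly this symmetry-driven collapse of the degree from $n$ to $n-p$ that produces a single term $a_nQ_{n-p}$ on the right-hand side rather than a longer combination of lower-degree $Q_m$.

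Next I would check that $R_n$ inherits the orthogonality conditions defining $Q_{n-p}$. Splitting $\int_{\Gamma_0}R_n\,z^l\,ds_j=\int_{\Gamma_0}Q_n\,z^{l+1}\,ds_j-\int_{\Gamma_0}Q_{n+1}\,z^l\,ds_j$ and applying \eqref{orthog:Qn}, the first integral vanishes once $l+1\le\lfloor(n-j-1)/p\rfloor$ and the second once $l\le\lfloor(n-j)/p\rfloor$; both conditions hold for all $0\le l\le\lfloor(n-j-1)/p\rfloor-1=\lfloor((n-p)-j-1)/p\rfloor$ and all $0\le j\le p-1$, which is precisely the range in \eqref{orthog:Qn} for the index $n-p$. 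Thus $R_n$ is a polynomial of degree at most $n-p$ meeting every orthogonality condition of $Q_{n-p}$. Normality makes such a polynomial unique up to scale: subtracting the appropriate multiple of $Q_{n-p}$ from any such $R_n$ yields a polynomial of degree at most $n-p-1$ still satisfying all those conditions, which must vanish because $d_{n-p}=n-p$. Hence $R_n=a_nQ_{n-p}$, giving \eqref{threetermrec}; and $a_n\in\mathbb R$ since $Q_n(z)=\overline{Q_n(\bar z)}$ forces real coefficients on all $Q_m$, hence on $R_n$ and $Q_{n-p}$, so their constant ratio is real.

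The conceptual content therefore lies entirely in the symmetry step; the main obstacle is the careful matching of the floor-function index ranges in the orthogonality computation, where one must verify simultaneously that the shift $l\mapsto l+1$ induced by multiplication by $z$ keeps the conditions for $Q_n$ inside the range for $Q_{n-p}$ and that the conditions for $Q_{n+1}$ are not overshot, using the identity $\lfloor(n-j-1)/p\rfloor-1=\lfloor((n-p)-j-1)/p\rfloor$ and the monotonicity $\lfloor(n-j-1)/p\rfloor\le\lfloor(n-j)/p\rfloor$. I would state this bookkeeping explicitly so that no condition is silently dropped.
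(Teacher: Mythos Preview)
Your proof is correct, and it takes a somewhat different route from the paper's. Both arguments begin with the same symmetry observation (the paper phrases it via the factorization $Q_n(z)=z^{\ell}\mathcal{Q}_d(z^{p+1})$, you via $R_n(\omega z)=\omega^{n+1}R_n(z)$) to conclude that $R_n=zQ_n-Q_{n+1}$ has degree at most $n-p$. From there the paper expands $R_n=\sum_{j=0}^{n-p}b_jQ_j$ in the basis of lower $Q_j$'s and kills the coefficients $b_0,\ldots,b_{n-p-1}$ one at a time by successively multiplying by $z^l$ and integrating against $s_0,\ldots,s_{p-1}$, using normality at each step to see that the surviving moment is nonzero. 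You instead verify in one stroke that $R_n$ itself satisfies the full block of orthogonality conditions \eqref{orthog:Qn} for the index $n-p$, and then invoke normality once to identify $R_n$ with a scalar multiple of $Q_{n-p}$. Your floor-function bookkeeping (the identity $\lfloor(n-j-1)/p\rfloor-1=\lfloor((n-p)-j-1)/p\rfloor$ and the monotonicity check for the $Q_{n+1}$ term) is the exact analogue of the paper's inductive integration, just packaged differently. The advantage of your approach is economy: no basis expansion, no induction on blocks of $p$ coefficients; the paper's approach, on the other hand, makes the role of each measure $s_j$ more visible.
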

\begin{proof}
The equation \eqref{eq:firstQns} is clear since we know that if
$n=d(p+1)+\ell$, $0\leq \ell\leq p$, then $Q_n(z)=z^\ell\mathcal{Q}_d(z^{p+1})$
for some monic polynomial $\mathcal{Q}_d$ of degree $d$. Moreover, this also
implies that for $n\geq p$, $zQ_{n}(z)-Q_{n+1}(z)=c_{n-p}z^{n-p}+\cdots$. Thus,
we can write
\begin{equation}\label{expansion:rec}
z Q_{n}=Q_{n+1}+\sum_{j=0}^{n-p} b_j\,Q_j,
\end{equation}
for some real coefficients $\{b_j\}_{j=0}^{n-p}$. The goal is to show that
\begin{equation}\label{zeroreccoeff:1}
b_{0}=b_{1}=\cdots=b_{n-p-1}=0.
\end{equation}

Assume that $n=mp+k$, $0\leq k\leq p-1$. If we integrate \eqref{expansion:rec}
term by term with respect to the first measure $s_{0}$ of the Nikishin system,
we observe that the only non-vanishing integral is $\int Q_{0}\, ds_{0}$, and
consequently $b_{0}=0$. Integrating \eqref{expansion:rec} successively with
respect to $s_{j}$ we obtain $b_{j}=0$ for $j=1,\ldots,p-1$.

In general, one proves inductively that for all $l$ such that $0\leq l\leq
m-2$, we have
\begin{equation}\label{zeroreccoeff:3}
b_{lp}=b_{lp+1}=\cdots=b_{lp+(p-1)}=0.
\end{equation}
The case $l=0$ was described above. Assume now that all coefficients $b_{s}$ in
\eqref{expansion:rec} are zero for $s<lp$. If we multiply \eqref{expansion:rec}
by $z^{l}$ and integrate with respect to $s_{0}$, then the only non-vanishing
integral in the resulting expression is $\int z^{l}\,Q_{lp}(z)\,d s_{0}(z)$.
Indeed, all other integrals vanish because of the orthogonality conditions, and
 $\int z^{l}\,Q_{lp}(z)\,d s_{0}(z)=0$ would imply that $Q_{lp+1}$ and $Q_{lp}$
satisfy the same orthogonality conditions, violating the normality of the
Nikishin system. So $b_{lp}=0$. Integrating successively with respect to the
rest of the measures $s_{j}$ one obtains \eqref{zeroreccoeff:3}. The remaining
part of \eqref{zeroreccoeff:1} is
\[
b_{(m-1)p}=b_{(m-1)p+1}=\cdots=b_{(m-1)p+k-1}=0,
\]
which is proved multiplying by $z^{m-1}$ and integrating with respect to
$s_{0},\ldots,s_{k-1}$.
\end{proof}
We now show that the functions of the second kind satisfy a similar recurrence
relation.
\begin{proposition}\label{prop:recurrencerelationpsin} Let $a_n$, $n\geq p$, be the coefficients of the recurrence
relation \eqref{threetermrec}. For every $n\geq p$, $0\leq k\leq p$, we have
\begin{equation}\label{threetermrecsecondkind}
z\Psi_{n,k}(z)=\Psi_{n+1,k}(z)+a_n\Psi_{n-p,k}(z),
\end{equation}
and if $n\equiv\ell\mod(p+1)$, $0\leq \ell \leq p-1$, then
\begin{equation}\label{threetermrecsecondkindpequena1}
\psi_{n,k}(z)=\psi_{n+1,k}(z)+a_n\psi_{n-p,k}(z),
\end{equation}
while if $n\equiv p\mod(p+1)$, then
\begin{equation}\label{threetermrecsecondkindpequena2}
z\psi_{n,k}(z)=\psi_{n+1,k}(z)+a_n\psi_{n-p,k}(z).
\end{equation}
\end{proposition}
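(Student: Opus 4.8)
The plan is to prove the relation \eqref{threetermrecsecondkind} for the functions $\Psi_{n,k}$ by induction on $k$, taking as base case $k=0$ the recurrence \eqref{threetermrec} for $Q_n=\Psi_{n,0}$ established in Proposition \ref{prop:recurrencerelationforQn}, and then to read off \eqref{threetermrecsecondkindpequena1} and \eqref{threetermrecsecondkindpequena2} from \eqref{threetermrecsecondkind} by means of the reduction formula \eqref{modifiedPsink}.

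For the inductive step, suppose \eqref{threetermrecsecondkind} holds at level $k-1$. Starting from Definition \ref{definitionPsi}, I would compute $z\Psi_{n,k}(z)=\int_{\Gamma_{k-1}}\frac{z\,\Psi_{n,k-1}(t)}{z-t}\,d\sigma_{k-1}(t)$ and split $z=(z-t)+t$ in the integrand. This produces the constant term $\int_{\Gamma_{k-1}}\Psi_{n,k-1}(t)\,d\sigma_{k-1}(t)$ together with $\int_{\Gamma_{k-1}}\frac{t\,\Psi_{n,k-1}(t)}{z-t}\,d\sigma_{k-1}(t)$. In the second integral I replace $t\,\Psi_{n,k-1}(t)$ by $\Psi_{n+1,k-1}(t)+a_n\Psi_{n-p,k-1}(t)$ using the inductive hypothesis; by linearity of the Cauchy transform and Definition \ref{definitionPsi} the two resulting integrals are exactly $\Psi_{n+1,k}(z)$ and $a_n\Psi_{n-p,k}(z)$.

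The only genuine obstacle is the vanishing of the constant term, and this is where the orthogonality enters. Since $1\le k\le p$ throughout the inductive step, the function $\Psi_{n,k-1}$ falls under Proposition \ref{orthoPsi}; choosing there the index $j=k-1$ and the power $l=0$, and recalling $s_{k-1,k-1}=\sigma_{k-1}$, gives precisely $\int_{\Gamma_{k-1}}\Psi_{n,k-1}(t)\,d\sigma_{k-1}(t)=0$, provided this condition is in range, i.e. $\lf\frac{n-k}{p}\rf\ge 0$. As $n\ge p\ge k$, this indeed holds, so the constant term drops out and \eqref{threetermrecsecondkind} follows at level $k$, closing the induction for all $0\le k\le p$.

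For the small-$\psi$ relations, let $\ell_m$ denote the residue of $m$ modulo $p+1$, so that $\ell_n=\ell$; since $n+1\equiv n-p\pmod{p+1}$ one has $\ell_{n+1}=\ell_{n-p}$, both equal to $\ell+1$ when $\ell\le p-1$ and to $0$ when $\ell=p$. Substituting $\Psi_{m,k}(z)=z^{\ell_m-k}\psi_{m,k}(z^{p+1})$ from \eqref{modifiedPsink} into \eqref{threetermrecsecondkind} and clearing the common factor $z^{-k}$ gives $z^{\ell+1}\psi_{n,k}(z^{p+1})=z^{\ell_{n+1}}\psi_{n+1,k}(z^{p+1})+a_n z^{\ell_{n-p}}\psi_{n-p,k}(z^{p+1})$. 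When $\ell\le p-1$ all three exponents equal $\ell+1$ and cancel, producing \eqref{threetermrecsecondkindpequena1}; when $\ell=p$ the exponents are $p+1,0,0$, and writing $w=z^{p+1}$ yields \eqref{threetermrecsecondkindpequena2}. Because $z\mapsto z^{p+1}$ is surjective, these identities in $z^{p+1}$ are bona fide identities in the argument of the $\psi_{m,k}$, which completes the proof.
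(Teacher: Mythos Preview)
Your proof is correct and follows essentially the same route as the paper's: induction on $k$ via the Cauchy transform with the split $z=(z-t)+t$, the constant term killed by the $l=0$, $j=k-1$ case of Proposition~\ref{orthoPsi} (valid since $n\ge p\ge k$), and then the passage to the $\psi_{n,k}$ identities by substituting \eqref{modifiedPsink} and comparing the powers $\ell_n+1$, $\ell_{n+1}$, $\ell_{n-p}$. The only cosmetic difference is that the paper indexes the inductive step from $k$ to $k+1$ rather than from $k-1$ to $k$.
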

\begin{proof}
For $k=0$, by definition, $\Psi_{n,0}=Q_n$, and so
\eqref{threetermrecsecondkind} reduces to \eqref{threetermrec}. Let us then
assume that \eqref{threetermrecsecondkind} holds for some $0\leq k\leq p-1$.
Then, by the very definition of $\Psi_{n,k}$, we have
\begin{align*}
\int_{\Gamma_k}\frac{t\Psi_{n,k}(t)d\sigma_k(t)}{z-t}={}&\int_{\Gamma_k}\frac{
\Psi_{n+1,k}(t)d\sigma_k(t)}{z-t}+a_n\int_{\Gamma_k}\frac{
\Psi_{n-p,k}(t)d\sigma_k(t)}{z-t}\\
={}& \Psi_{n+1,k+1}(z)+a_n\Psi_{n-p,k+1}(z).
\end{align*}
Now, from Proposition \ref{orthoPsi}, we know that
\[
\int_{\Gamma_k}\Psi_{n,k}(z)z^ld\sigma_k(z)=0, \quad 0\leq l\leq
\lf\frac{n-k-1}{p}\rf,
\]
and since $n-k-1\geq p-1-k\geq 0$,  we have that
$\int_{\Gamma_k}\Psi_{n,k}(z)d\sigma_k(z)=0$ and so
\[
\int_{\Gamma_k}\frac{t\Psi_{n,k}(t)d\sigma_k(t)}{z-t}=z\int_{\Gamma_k}\frac{
\Psi_{n,k}(t)d\sigma_k(t)}{z-t}-\int_{\Gamma_k}\Psi_{n,k}(t)d\sigma_k(t)=z\Psi_{
n,k+1}(z).
\]
Now, using \eqref{modifiedPsink} in \eqref{threetermrecsecondkind} we find that
for $n\geq p$, $0\leq k\leq p$,
\[
z^{\ell_n+1-k}\psi_{n,k}(z^{p+1})=z^{\ell_{n+1}-k}\psi_{n+1,k}(z^{p+1})+a_nz^{
\ell_{n-p}-k}\psi_{n-p,k}(z^{p+1}).
\]
where we are using the notation $\ell_n$ to mean the remainder of $n$ in the
division by $p+1$, $0\leq \ell_n\leq p$.

The relations
\eqref{threetermrecsecondkindpequena1}-\eqref{threetermrecsecondkindpequena2}
then follow from the fact that $\ell_{n+1}=\ell_{n-p}=\ell_n+1$ when
$\ell_n\leq p-1 $, while $\ell_{n+1}=\ell_{n-p}=0$ when $\ell_n=p$.\end{proof}

\begin{lemma} \label{psinplusonesigmank} Let $n\geq p$ and suppose that $n\equiv k \mod p$, $0\leq k\leq
p-1$, and $n\equiv\ell\mod(p+1)$, $0\leq \ell \leq p$. Then
\begin{align}\label{orthoznkdiff1}
\int_{a_k}^{b_k}\psi_{n+1,k}(\tau)\tau^{Z(n,k)-Z(n,k+1)}d\sigma_{n,k}(\tau)=0,
\quad \ell\leq p-1,
\end{align}
\begin{align}\label{orthoznkdiff2}
\int_{a_k}^{b_k}\psi_{n+1,k}(\tau)\tau^{Z(n,k)-Z(n,k+1)-1}d\sigma_{n,k}(\tau)=0,
\quad \ell=p,
\end{align}
and
\begin{align}\label{orthoznkdiff5}
\tau^{Z(n-p,k)-Z(n-p,k+1)}d\sigma_{n-p,k}(\tau)=\begin{cases}
\tau^{Z(n,k)-Z(n,k+1)}d\sigma_{n,k}(\tau),&  \ell\leq p-1,\\
\tau^{Z(n,k)-Z(n,k+1)-1}d\sigma_{n,k}(\tau), & \ell=p.
                                                \end{cases}
\end{align}
 \end{lemma}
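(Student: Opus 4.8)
The plan is to exploit the special structure forced by the hypothesis $n\equiv k\bmod p$. First I would record the crucial simplification: when the second index equals the mod-$p$ residue, the one-step difference in $Z$ collapses to the large quotient. Precisely, setting $\lambda:=\lf n/(p(p+1))\rf$, I claim that for every $m\equiv k\bmod p$ one has $Z(m,k)-Z(m,k+1)=\lf m/(p(p+1))\rf$; in particular $Z(n,k)-Z(n,k+1)=\lambda$. This follows directly from \eqref{differenceznk} and \eqref{lowerbound} by writing $m=\lambda p(p+1)+pq+k$ and evaluating the upper floor with the help of \eqref{nlambdaq}: both cases $\ell\le k$ and $k<\ell$ give the count $\lambda$. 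Applying this with $m=n-p$ (whose mod-$p$ residue is again $k$) yields $Z(n-p,k)-Z(n-p,k+1)=\lambda':=\lf(n-p)/(p(p+1))\rf$, and the same division data show that $\lambda'=\lambda$ unless $\ell=k$, in which case $\lambda'=\lambda-1$.

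With this in hand, \eqref{orthoznkdiff5} becomes a purely arithmetic identity. By \eqref{varymeas:sigma}, each measure $d\sigma_{m,k}$ equals $\tau^{\varepsilon_m}d\sigma_k^*$, where $\varepsilon_m=1$ iff $k$ is strictly below the mod-$(p+1)$ residue of $m$. Since $n-p$ has mod-$(p+1)$ residue $\ell+1$ when $\ell\le p-1$ and $0$ when $\ell=p$, I would reduce both cases of \eqref{orthoznkdiff5} to a single equation for the total power of $\tau$ relative to $d\sigma_k^*$, namely $\lambda'+\varepsilon_{n-p}$ against $\lambda+\varepsilon_n$ (resp. $\lambda-1+\varepsilon_n$ when $\ell=p$); the indicator bookkeeping then matches exactly the discrepancy $\lambda'-\lambda$ computed above.

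For \eqref{orthoznkdiff1} and \eqref{orthoznkdiff2} I would avoid the recurrence relation entirely and instead invoke the orthogonality of $\psi_{n+1,k}$ itself. Because $n+1\equiv n-p\bmod(p+1)$, we have $d\sigma_{n+1,k}=d\sigma_{n-p,k}$, so \eqref{orthoznkdiff5} shows that the target measure $\tau^{Z(n,k)-Z(n,k+1)}d\sigma_{n,k}$ (resp. $\tau^{Z(n,k)-Z(n,k+1)-1}d\sigma_{n,k}$ when $\ell=p$) equals $\tau^{s_0}d\sigma_{n+1,k}$ with $s_0=Z(n-p,k)-Z(n-p,k+1)$. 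Then \eqref{orthogredPsink2}, applied to the index $n+1$, makes the integral vanish as soon as $s_0\le Z(n+1,k)-Z(n+1,k+1)-1$.

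The crux --- and the step I expect to be the main obstacle --- is precisely this last inequality, because the index $n+1$ has mod-$p$ residue $k+1$ (or $0$ when $k=p-1$), so the clean collapse no longer applies and one must compute $Z(n+1,k)-Z(n+1,k+1)$ from \eqref{differenceznj} by running through the orderings of $k$ and $\ell$. This gives the value $\Lambda+1$ except in the diagonal boundary situations $\ell=k$ and $(\ell,k)=(p,p-1)$, where it equals $\Lambda:=\lf(n+1)/(p(p+1))\rf$; comparing with $s_0\in\{\lambda-1,\lambda\}$ and using $\Lambda\ge\lambda$ (with $\Lambda=\lambda+1$ exactly when $q=p$ and $k=p-1$) verifies $s_0\le Z(n+1,k)-Z(n+1,k+1)-1$ in every case. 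Finally, the positivity $s_0\ge 0$ is where the hypothesis $n\ge p$ enters: the only configuration giving $s_0=\lambda-1$ is $\ell=k$, and there $n\ge p$ forces $q\ge 1$, hence $\lambda\ge 1$.
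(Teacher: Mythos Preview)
Your approach is essentially the paper's: both reduce the integrals in \eqref{orthoznkdiff1}--\eqref{orthoznkdiff2} to the orthogonality \eqref{orthogredPsink2} for $\psi_{n+1,k}$ by computing $Z(n,k)-Z(n,k+1)=\lambda$, $Z(n+1,k)-Z(n+1,k+1)$, and matching the exponents against the passage from $d\sigma_{n,k}$ to $d\sigma_{n+1,k}$; and both handle \eqref{orthoznkdiff5} by comparing $Z(n-p,k)-Z(n-p,k+1)$ with the measure shift $d\sigma_{n-p,k}$ versus $d\sigma_{n,k}$. Your organizational choice of proving \eqref{orthoznkdiff5} first and reusing it (via $d\sigma_{n+1,k}=d\sigma_{n-p,k}$) to set up \eqref{orthoznkdiff1}--\eqref{orthoznkdiff2} is a neat shortcut, but the substance is the same.

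There is one slip in your last paragraph. When $\ell=k$, the identity \eqref{nlambdaq} (with $r=k$) forces $q=0$, not $q\ge 1$; your stated reason for $\lambda\ge 1$ is therefore wrong. The conclusion is still correct, and the fix is immediate: with $q=0$ one has $n=\lambda p(p+1)+k$, and since $k\le p-1<p\le n$ this forces $\lambda\ge 1$. Equivalently (and more simply), $s_0=\lambda'=\lf(n-p)/(p(p+1))\rf\ge 0$ is automatic from $n\ge p$.
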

 \begin{proof}
 Obviously,
\begin{align}\label{remaindern+1byp}
 n+1\equiv\begin{cases}
 \ell+1\mod (p+1), & \ell\leq p-1,\\
 0\mod (p+1), & \ell=p.\\
\end{cases}
\end{align}
With this in mind, we readily get from \eqref{varymeas:sigma} that
\eqref{orthoznkdiff1}-\eqref{orthoznkdiff2} are equivalent to
\begin{align}\label{orthoznkdiff3}
\int_{a_k}^{b_k}\psi_{n+1,k}(\tau)\tau^{Z(n,k)-Z(n,k+1)}d\sigma_{n+1,k}(\tau)=0,
\quad \ell\not=k,
\end{align}
 \begin{align}\label{orthoznkdiff4}
\int_{a_k}^{b_k}\psi_{n+1,k}(\tau)\tau^{Z(n,k)-Z(n,k+1)-1}d\sigma_{n+1,k}
(\tau)=0,\quad \ell=k.
\end{align}

Suppose $n\geq p$, $n\equiv k\mod p$, $0\leq k\leq p-1$. Then,
\begin{align}\label{remaindern+1byp+1}
 n+1\equiv\begin{cases}
 k+1\mod p, & k<p-1,\\
 0\mod p, & k=p-1.\\
\end{cases}
\end{align}

We then use \eqref{differenceznj} to analyze all possible cases emanating from
\eqref{remaindern+1byp} and \eqref{remaindern+1byp+1}. Using the notation
\[
\lambda(n)=\lf\frac{n}{p(p+1)}\rf,
\]
we find that
\begin{align*}
Z(n,k)-Z(n,k+1)=\lambda(n),
\end{align*}
\begin{align*}
Z(n+1,k)-Z(n+1,k+1) =\begin{cases}\lambda(n+1), & \ell=k,\\
                      \lambda(n+1), & \ell=p, \  k=p-1,\\
\lambda(n+1)+1,  & \mathrm{otherwise}.
                     \end{cases}
\end{align*}
Now, $\lambda(n+1)=\lambda(n)$ if $n+1$ is not a multiple of $p(p+1)$, and $\lambda(n+1)=\lambda(n)+1$ otherwise. The latter case holds exactly when $\ell=p$ and $k=p-1$. We then conclude that in all instances the exponent of $\tau$ in
\eqref{orthoznkdiff3}-\eqref{orthoznkdiff4} equals $Z(n+1,k)-Z(n+1,k+1)-1$. Notice that this quantity is non-negative since the smallest integer $n\geq p$ satisfying that $\ell=k$ (i.e.,
having the same remainder when divided by $p$ and by $p+1$) is $n=p(p+1)$. This together with \eqref{orthogredPsink2} yields
\eqref{orthoznkdiff1}-\eqref{orthoznkdiff2}.

Now, both $n$ and $n-p$ leave the same remainder $k$ when they are divided by $p$. If
$\ell\leq p-1$, then $n-p\equiv\ell+1\mod (p+1)$, and \eqref{differenceznj} yields
 \[
 Z(n-p,k)-Z(n-p,k+1)=\begin{cases}
 Z(n,k)-Z(n,k+1), & \ell\not=k,\\
 Z(n,k)-Z(n,k+1)-1, & \ell=k,\\
\end{cases}
\]
while from \eqref{varymeas:sigma} we get
\[
d\sigma_{n-p,k}(\tau) =\begin{cases}
d\sigma_{n,k}(\tau), & \ell\not=k,\\
\tau d\sigma_{n,k}(\tau), & \ell=k.\\
\end{cases}
\]
So we see that \eqref{orthoznkdiff5} holds in the case $\ell\leq p-1$. Similarly, if $\ell=p$, then $n-p\equiv 0\mod(p+1)$,
so that, again by \eqref{differenceznk} and \eqref{varymeas:sigma}, we have
\[
 Z(n-p,k)-Z(n-p,k+1)=Z(n,k)-Z(n,k+1), \quad \ell=p,
\]
and
\[
\tau d\sigma_{n-p,k}(\tau) =d\sigma_{n,k}(\tau), \quad \ell=p.
\]
\end{proof}
\begin{lemma} \label{lemmainproof}
Suppose that $n\equiv \ell\mod (p+1)$, $0\leq \ell\leq p$, and that $n=mp+k$
with $0\leq k\leq p-1$. With the notation $\Delta_k:=(a_k,b_k)$, we have
\begin{align}\label{signPnkminus1Pnkplus1formula}
\sign(P_{n,k-1}P_{n,k+1},\Delta_k)=\begin{cases}
1,\ &\mathrm{if}\ k\ \mathrm{is\ even},\\
-1,\ &\mathrm{if}\ k\not=\ell,\ k\ \mathrm{odd},\\
1,\ &\mathrm{if}\ k=\ell,\ k\ \mathrm{odd}.
\end{cases}
\end{align}
Also, for every $j$ in the range $0\leq j\leq k$, we have
\begin{align}\label{signHnkformula}
\sign(H_{n,j},\Delta_{j})=\begin{cases}(-1)^{j},\ &  j\leq \ell,\\
1,\ &  \ell<j.
\end{cases}
\end{align}
\end{lemma}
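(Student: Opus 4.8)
The plan is to establish both sign formulas by a simultaneous induction on $k$, using the sign-propagation recurrence \eqref{signHnk} for the $H_{n,k}$ together with the geometric placement of the intervals $\Delta_{k-1}$, $\Delta_k$, $\Delta_{k+1}$ along the real line. The key geometric fact is that the intervals alternate sides of the origin: $\Delta_j\subset(0,\infty)$ when $j$ is even and $\Delta_j\subset(-\infty,0)$ when $j$ is odd, and that consecutive intervals are disjoint from the origin. First I would compute $\sign(P_{n,k-1}P_{n,k+1},\Delta_k)$ directly. Since $P_{n,k-1}$ is a monic polynomial of degree $Z(n,k-1)$ with all zeros in $\Delta_{k-1}$, and $P_{n,k+1}$ is monic of degree $Z(n,k+1)$ with all zeros in $\Delta_{k+1}$ (Proposition \ref{prop:exactnumberzeros}), evaluating on $\Delta_k$ amounts to asking on which side of each zero-cluster the point $\tau\in\Delta_k$ lies. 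When $k$ is even, $\Delta_{k-1}$ and $\Delta_{k+1}$ both lie on the opposite side of the origin from $\Delta_k$ (one in $(-\infty,0)$, noting $k-1,k+1$ are odd), so a careful bookkeeping of signs yields the claimed value $1$; when $k$ is odd the two neighbors straddle $\Delta_k$ differently, producing the dependence on whether $k=\ell$. I expect this part to reduce to tracking parities of $Z(n,k\pm1)$ against the location of $\Delta_k$, and the $k=\ell$ distinction should enter through \eqref{varymeas:sigma}, since the measure carries an extra factor of $\tau$ precisely when $k<\ell$ versus $\ell\le k$, which shifts the relevant degree parity.

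For \eqref{signHnkformula} I would induct on $j$. The base case $j=0$ is immediate since $H_{n,0}\equiv1$ and the formula reads $\sign=(-1)^0=1$ (as $0\le\ell$ always). For the inductive step I would feed the hypothesis $\sign(H_{n,j-1},\Delta_{j-1})=(-1)^{j-1}$ (valid since $j-1\le\ell$ in the regime $j\le\ell$) into \eqref{signHnk}. The recurrence \eqref{signHnk} multiplies the previous sign by a power of $-1$ whose exponent involves $(j+1)[Z(n,j-2)-Z(n,j)]$ together with the $\pm1$ shift distinguishing $\ell<j$ from $j\le\ell$. The crux is to show that, in the range $j\le\ell$, this accumulated factor is exactly $-1$, so that $\sign(H_{n,j},\Delta_j)=(-1)^{j-1}\cdot(-1)=(-1)^j$, while at the transition $\ell<j$ the factor becomes $+1$ and the sign freezes at the value it held at $j=\ell$, namely $(-1)^\ell$, whose value I must then reconcile with the stated "$1$ for $\ell<j$."

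The main obstacle will be the parity computation of $Z(n,j-2)-Z(n,j)$ modulo $2$ inside the exponent of \eqref{signHnk}, and ensuring it combines correctly with the $(j+1)$ prefactor and the $\ell$-dependent $\pm1$ shift. Here the alternative closed forms for $Z(n,k)$ from \eqref{eq:Znk} (or equivalently \eqref{Znk:secondformula}) are essential: I would substitute the relevant case of \eqref{eq:Znk} for $Z(n,j-2)$ and $Z(n,j)$, take the difference, and read off its parity, being careful that the case-split in \eqref{eq:Znk} (whether the index is $\le\ell,v$ or exceeds them) is consistent as the index runs over $j-2,j-1,j$. A subtle point is that for small $j$ the convention $Z(n,-1)=0$ must be invoked, so the base steps $j=1$ and possibly $j=2$ may need to be checked by hand before the general recurrence takes over. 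Once the parity of the exponent is pinned down, both formulas follow by assembling the per-step factors along the induction, and I would close by verifying that the apparent conflict between $(-1)^\ell$ at $j=\ell$ and the stated value for $\ell<j$ is resolved by the sign-freezing in the $\ell<j$ branch of \eqref{signHnk}.
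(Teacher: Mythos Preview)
Your approach is essentially the paper's: induction on $j$ via the sign recurrence \eqref{signHnk}, with the parity of $Z(n,j-2)-Z(n,j)$ as the computational driver, and a direct evaluation of the monic polynomials $P_{n,k\pm1}$ on $\Delta_k$ for \eqref{signPnkminus1Pnkplus1formula}. Two points to sharpen.

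First, the $k=\ell$ versus $k\neq\ell$ split in \eqref{signPnkminus1Pnkplus1formula} does \emph{not} come from the measure \eqref{varymeas:sigma}; no measure appears in computing $\sign(P_{n,k-1}P_{n,k+1},\Delta_k)$. For $k$ odd, $\Delta_k\subset(-\infty,0)$ lies to the left of all zeros of both monic polynomials (which sit in $\Delta_{k\pm1}\subset(0,\infty)$), so the product sign is $(-1)^{Z(n,k-1)+Z(n,k+1)}=(-1)^{Z(n,k-1)-Z(n,k+1)}$. The $\ell$-dependence enters because \eqref{differenceznj} gives $Z(n,k-1)-Z(n,k+1)=2\lambda+1$ when $\ell\neq k$ and $=2\lambda$ when $\ell=k$ (here $k$ is the residue of $n$ mod $p$, so $j=k-1<k$ and $j=k\geq k$ fall in different rows of \eqref{differenceznj}).

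Second, the transition from $(-1)^\ell$ at $j=\ell$ to the constant value $1$ for $\ell<j$ is not merely ``sign-freezing.'' The step $j=\ell+1$ lies in the $\ell<j$ branch of \eqref{signHnk}, but there $Z(n,j-2)-Z(n,j)=2\lambda+1$ is odd (the case $\ell=j-1$ in the paper's computation), so the multiplicative factor is $(-1)^{(j+1)(2\lambda+1)}=(-1)^{\ell+2}=(-1)^\ell$, which cancels the accumulated $(-1)^\ell$ to give $1$. Only for $j\geq\ell+2$ does $Z(n,j-2)-Z(n,j)$ become even and the sign genuinely freeze. You should isolate the $j=\ell+1$ step explicitly rather than treat the whole $\ell<j$ branch uniformly.
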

\begin{proof}From \eqref{signHnk}, we know that for every $1\leq j\leq p-1$,
\begin{equation}\label{intrep:Hnklemma2kt}
\sign(H_{n,j},\Delta_{j})=\begin{cases}
(-1)^{1+(j+1)[Z(n,j-2)-Z(n,j)]}\sign(H_{n,j-1},\Delta_{j-1}),\ & j\leq \ell,\\
(-1)^{(j+1)[Z(n,j-2)-Z(n,j)]}\sign(H_{n,j-1},\Delta_{j-1}),\ & \ell<j,
\end{cases}
\end{equation}
which will allow us to recursively compute the sign of $H_{n,j}$.

From \eqref{differenceznj}, we get that for all $j< k$,
\begin{align*}
Z(n,j)-Z(n,j+1)={}& \begin{cases}
                      \lambda+1,& \ell\leq j\ \mathrm{or}\ k<\ell,\\
                      \lambda,& j<\ell\leq  k,
                     \end{cases}
\end{align*}
while if $k\leq  j$, then
\begin{align*}
Z(n,j)-Z(n,j+1)={}& \begin{cases}
                      \lambda,& \ell\leq k\ \mathrm{or}\ j<\ell,\\
                      \lambda+1,& k<\ell\leq  j.
                     \end{cases}
\end{align*}
Since
\[
Z(n,j-2)-Z(n,j)= Z(n,j-2)-Z(n,j-1)+Z(n,j-1)-Z(n,j),
\]
this implies that if $2\leq j\leq k$, then
\begin{align}\label{differenceznktminus2}
Z(n,j-2)-Z(n,j)={}& \begin{cases}
                      2(\lambda+1), &\ell<j-1\ \mathrm{or}\ k< \ell,\\
                        2\lambda+1,&  \ell=j-1,\\
                       2\lambda, &  j-1< \ell\leq k,
                     \end{cases}
\end{align}
and if $j= k+1$, then
\begin{align}\label{differenceznktminus2:2}
Z(n,k-1)-Z(n,k+1)={}& \begin{cases}
                      2\lambda+1, &\ell\not= k,\\
                       2\lambda, &  \ell= k.
                     \end{cases}
\end{align}

The validity of \eqref{signPnkminus1Pnkplus1formula} for $k$ even
 is trivial, since in such a case, $\Delta_k\subset (0,\infty)$ while the zeros
of the monic polynomials $P_{n,k\pm 1}$ all lie in $\Delta_{k\pm 1}\subset
(-\infty, 0)$.

If $k\geq 1$ is odd, then $\Delta_k\subset (-\infty,0)$,
$\Delta_{k-1},\Delta_{k+1}\subset (0,\infty)$, so that
\begin{align*}
\sign(P_{n,k-1}P_{n,k+1},\Delta_k)={}& (-1)^{Z(n,k-1)-Z(n,k+1)}
               \end{align*}
and \eqref{signPnkminus1Pnkplus1formula} for $k$ odd follows from
\eqref{differenceznktminus2:2}.

Now, directly from \eqref{intrep:Hnklemma2kt} we get
\begin{align}\label{signhn1hn0}
 \sign(H_{n,1},\Delta_{1})=\begin{cases}
                             \sign(H_{n,0},\Delta_{0}),&\ell=0,\\
                             -\sign(H_{n,0},\Delta_{0}),&\ell\geq 1,
                            \end{cases}
\end{align}
while from \eqref{differenceznktminus2} and \eqref{intrep:Hnklemma2kt}, we
obtain that for all $2\leq j\leq k$,
\begin{equation}\label{signhnkt}
\sign(H_{n,j},\Delta_{j})=\begin{cases}
-\sign(H_{n,j-1},\Delta_{j-1}), &  k<\ell,\\
\sign(H_{n,j-1},\Delta_{j-1}), & \ell< j-1,\\
(-1)^{j+1}\sign(H_{n,j-1},\Delta_{j-1}), & \ell= j-1,\\
-\sign(H_{n,j-1},\Delta_{j-1}), & j\leq \ell\leq k.
\end{cases}
\end{equation}

This implies that
\begin{align*}
\sign(H_{n,j},\Delta_{j})=-\sign(H_{n,j-1},\Delta_{j-1}),\quad 1\leq j\leq \ell,
\end{align*}
 and iterating this relation we obtain (recall that $H_{n,0}\equiv 1$)
\begin{align}\label{signktleql}
\sign(H_{n,j},\Delta_{j})=(-1)^{j},\quad 0\leq j\leq \ell.
\end{align}

We now get from \eqref{signhnkt} and  \eqref{signktleql}  that if $\ell<j\leq
k$ and $j\geq 2$, then
\begin{align*}
\sign(H_{n,j},\Delta_{j})=\sign(H_{n,\ell+1},\Delta_{\ell+1})=(-1)^{\ell}
\sign(H_{n,\ell},\Delta_{\ell})=1.
\end{align*}
By \eqref{signhn1hn0}, we see that this last relation also holds if
$j=1>\ell=0$, completing the proof of \eqref{signHnkformula}.
\end{proof}

\begin{theorem}\label{thm:positivityrecurrencecoeff} The coefficients $a_n$ of the recurrence relation
\eqref{threetermrec} are all positive, i.e., $a_n> 0$ for every $n\geq p$.
\end{theorem}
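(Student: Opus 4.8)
The plan is to obtain a closed formula $a_n=I_n/J_{n-p}$ expressing $a_n$ as a ratio of two moment integrals, and then to show that numerator and denominator always carry the same sign. Write $n=mp+k$ with $0\le k\le p-1$ and $n\equiv\ell\bmod(p+1)$, and work with the function of the second kind $\psi_{n,k}$ at the level $k$ equal to the residue of $n$ modulo $p$. Multiplying the recurrence \eqref{threetermrecsecondkindpequena1} (or \eqref{threetermrecsecondkindpequena2} when $\ell=p$) by a suitable power of $\tau$ and integrating against $d\sigma_{n,k}$ over $[a_k,b_k]$, I would invoke Lemma~\ref{psinplusonesigmank}: the conditions \eqref{orthoznkdiff1}--\eqref{orthoznkdiff2} annihilate the $\psi_{n+1,k}$ term, while \eqref{orthoznkdiff5} rewrites the $\psi_{n-p,k}$ term as the index-$(n-p)$ analogue of the same integral. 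Both parities of $\ell$ collapse to the single identity $I_n=a_n\,J_{n-p}$ with
\[
I_n:=\int_{a_k}^{b_k}\psi_{n,k}(\tau)\,\tau^{Z(n,k)-Z(n,k+1)}\,d\sigma_{n,k}(\tau),
\]
and $J_{n-p}$ the same expression with $n$ replaced by $n-p$. Since $n$ and $n-p$ have the same residue $k$ modulo $p$, it suffices to prove that $\sign(I_m)$ depends on $m$ only through $k$.

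The main obstacle is that $I_n$ is not manifestly of one sign: the integrand oscillates, because $\psi_{n,k}$ changes sign at each of its $Z(n,k)$ zeros in $(a_k,b_k)$ whereas $\tau^{Z(n,k)-Z(n,k+1)}$ does not. To make the sign visible I would recast $I_n$ using the auxiliary functions $H_{n,k}$. Identifying $I_n$ with the leading Laurent coefficient of $\psi_{n,k+1}$ at infinity and comparing with the integral representation \eqref{intrep:Hnk} at level $k+1$ (equivalently, promoting the monomial to $P_{n,k}/P_{n,k+1}$ modulo terms killed by the orthogonality \eqref{varyorthog:Pnk} and \eqref{orthogredPsink2}) gives
\[
I_n=\int_{a_k}^{b_k}P_{n,k}^{2}(\tau)\,\frac{H_{n,k}(\tau)\,d\sigma_{n,k}(\tau)}{P_{n,k-1}(\tau)\,P_{n,k+1}(\tau)}.
\]
On $\Delta_k=(a_k,b_k)$ the factor $P_{n,k}^2$ is nonnegative, $H_{n,k}$ is zero-free and hence of constant sign, $P_{n,k-1}$ and $P_{n,k+1}$ have constant sign because their zeros lie in $\Delta_{k-1}$ and $\Delta_{k+1}$, and $d\sigma_{n,k}$ (which is $d\sigma_k^*$ or $\tau\,d\sigma_k^*$) is of constant sign as well. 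Thus the integrand is sign-definite, $I_n\neq 0$, and
\[
\sign(I_n)=\sign(H_{n,k},\Delta_k)\cdot\sign(P_{n,k-1}P_{n,k+1},\Delta_k)\cdot\varepsilon_{n,k},
\]
where $\varepsilon_{n,k}=1$ if $\ell\le k$ and $\varepsilon_{n,k}=(-1)^k$ if $k<\ell$.

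It then remains to evaluate this product with Lemma~\ref{lemmainproof}, which provides $\sign(H_{n,k},\Delta_k)$ from \eqref{signHnkformula} (namely $(-1)^k$ if $k\le\ell$ and $1$ if $\ell<k$) and $\sign(P_{n,k-1}P_{n,k+1},\Delta_k)$ from \eqref{signPnkminus1Pnkplus1formula}. A short case check on the parity of $k$ and on whether $\ell\le k$, $\ell=k$, or $k<\ell$ shows that all dependence on $\ell$ cancels and $\sign(I_n)=(-1)^k$. The same computation applied to $n-p$, which shares the residue $k$ although it has its own residue $\ell'$ modulo $p+1$, gives $\sign(J_{n-p})=(-1)^k=\sign(I_n)$ and $J_{n-p}\neq 0$, whence $a_n=I_n/J_{n-p}>0$. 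I expect the only delicate points to be the exponent-and-measure bookkeeping in the extraction step---in particular the case $\ell=p$, where $d\sigma_{n,k}=\tau\,d\sigma_k^*$ absorbs the shift in the power of $\tau$ so that no negative exponent actually occurs---and the verification that the sign product is independent of $\ell$; everything else is an assembly of facts already in hand.
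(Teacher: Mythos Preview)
Your proposal is correct and follows essentially the same approach as the paper's own proof: derive the identity $I_n=a_n J_{n-p}$ from Lemma~\ref{psinplusonesigmank} and the recurrences \eqref{threetermrecsecondkindpequena1}--\eqref{threetermrecsecondkindpequena2}, rewrite $I_n$ as $\int P_{n,k}^2 H_{n,k}/(P_{n,k-1}P_{n,k+1})\,d\sigma_{n,k}$ via \eqref{varyorthog:psink:1}, and use Lemma~\ref{lemmainproof} together with the sign of $d\sigma_{n,k}$ to obtain $\sign(I_n)=(-1)^k$ independently of $\ell$. Your case check of the sign product and your handling of the $\ell=p$ exponent shift are both accurate and match the paper's computation.
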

\begin{proof}
It follows directly from
\eqref{threetermrecsecondkindpequena1}-\eqref{threetermrecsecondkindpequena2}
and Lemma \ref{psinplusonesigmank} that for all $n\geq p$, with $n\equiv k\mod p$, we
have
\begin{align}\label{twointegralswitha_n}
\int_{a_k}^{b_k}\tau^{Z(n,k)-Z(n,k+1)}\psi_{n,k}(\tau)d\sigma_{n,k}
(\tau)=a_n\int_{a_k}^{b_k}\tau^{Z(n-p,k)-Z(n-p,k+1)}\psi_{n-p,k}(\tau)d\sigma_{
n-p,k}(\tau).
\end{align}
Since $\mathrm{deg}(P_{n,k})=Z(n,k)$, we get from \eqref{varyorthog:psink:1}
that
\begin{align}\label{repreformomentpsink}
\int_{a_k}^{b_k}\tau^{Z(n,k)-Z(n,k+1)}\psi_{n,k}(\tau)d\sigma_{n,k}(\tau)={} &
\int_{a_k}^{b_k}\tau^{Z(n,k)-Z(n,k+1)}P_{n,k+1}(\tau)\frac{\psi_{n,k}(\tau)}{P_{
n,k+1}(\tau)}d\sigma_{n,k}(\tau)\notag\\
={}
&\int_{a_k}^{b_k}P^2_{n,k}(\tau)\frac{H_{n,k}(\tau)}{P_{n,k-1}(\tau)P_{n,k+1}
(\tau)}d\sigma_{n,k}(\tau).
\end{align}
It follows from  Lemma \ref{lemmainproof} and \eqref{varymeas:sigma} that if
$n=mp+k$, then
\begin{align*}
\sign\left(\int_{a_k}^{b_k}P^2_{n,k}(\tau)\frac{H_{n,k}(\tau)}{P_{n,k-1}(\tau)P_
{n,k+1}(\tau)}d\sigma_{n,k}(\tau)\right)=(-1)^k,
\end{align*}
and since $n-p=(m-1)p+k$, we conclude that the two integrals in
\eqref{twointegralswitha_n} have the same sign, and thus $a_n>0$.
\end{proof}

\begin{corollary}\label{cor:zerointerlacing}
The non-zero roots of the polynomials $Q_{n}$ and $Q_{n+1}$ interlace on $\Gamma_{0}$ for every $n\geq p+1$, i.e., between two consecutive non-zero roots of $Q_{n}$ there is exactly one non-zero root of $Q_{n+1}$ and vice versa.
\end{corollary}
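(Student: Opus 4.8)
The plan is to reduce the statement to an interlacing property of the \emph{reduced} polynomials and then run the recurrence together with the positivity $a_n>0$ just established. By \eqref{Qd}, writing $n=d(p+1)+\ell$ with $0\le\ell\le p$, we have $Q_n(z)=z^{\ell}\mathcal{Q}^{(n)}(z^{p+1})$, where $\mathcal{Q}^{(n)}:=\mathcal{Q}_d$ is monic of degree $d$; by Proposition \ref{prop:oddmult} its $d$ zeros are simple and lie in $(a_0,b_0)$. The nonzero roots of $Q_n$ are exactly the $(p+1)$-st roots of these zeros, so by the symmetry \eqref{symm:Qn} they split into $d$ full $\omega$-orbits, one point per ray of $\Gamma_0$. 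On each ray the map $z\mapsto z^{p+1}$ is an increasing bijection onto $[a_0,b_0]$, hence the ordering of the nonzero roots of $Q_n$ along a ray matches the ordering of the zeros of $\mathcal{Q}^{(n)}$ in $(a_0,b_0)$. Consequently the asserted interlacing of the nonzero roots of $Q_n$ and $Q_{n+1}$ on $\Gamma_0$ is equivalent, ray by ray, to the interlacing of the zeros of $\mathcal{Q}^{(n)}$ and $\mathcal{Q}^{(n+1)}$ in $(a_0,b_0)$.

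Specializing Proposition \ref{prop:recurrencerelationpsin} to $k=0$ (so that $\psi_{n,0}=\mathcal{Q}^{(n)}$), relations \eqref{threetermrecsecondkindpequena1}--\eqref{threetermrecsecondkindpequena2} read
\[
\mathcal{Q}^{(n)}(\tau)=\mathcal{Q}^{(n+1)}(\tau)+a_n\mathcal{Q}^{(n-p)}(\tau)\quad(\ell\le p-1),\qquad
\tau\,\mathcal{Q}^{(n)}(\tau)=\mathcal{Q}^{(n+1)}(\tau)+a_n\mathcal{Q}^{(n-p)}(\tau)\quad(\ell=p).
\]
In either case, evaluating at a zero $x\in(a_0,b_0)$ of $\mathcal{Q}^{(n)}$ makes the left-hand side vanish, so $\mathcal{Q}^{(n+1)}(x)=-a_n\mathcal{Q}^{(n-p)}(x)$; since $a_n>0$ by Theorem \ref{thm:positivityrecurrencecoeff}, we get $\sign\mathcal{Q}^{(n+1)}(x)=-\sign\mathcal{Q}^{(n-p)}(x)$. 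Thus, if $\mathcal{Q}^{(n-p)}$ alternates sign at the ordered zeros of $\mathcal{Q}^{(n)}$ --- i.e.\ if $\mathcal{Q}^{(n)}$ and $\mathcal{Q}^{(n-p)}$ interlace --- then so does $\mathcal{Q}^{(n+1)}$, which together with the degree count $\deg\mathcal{Q}^{(n+1)}\in\{d,d+1\}$ and the simplicity of the zeros yields the interlacing of $\mathcal{Q}^{(n)}$ and $\mathcal{Q}^{(n+1)}$.

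This identity turns the desired \emph{consecutive} interlacing into the \emph{lagged} interlacing of $\mathcal{Q}^{(n)}$ and $\mathcal{Q}^{(n-p)}$, whose indices differ by $p$. I would therefore prove, by induction on $N$, the thick-chain statement
\[
T(N):\quad \mathcal{Q}^{(i)}\ \text{and}\ \mathcal{Q}^{(j)}\ \text{interlace whenever}\ 0\le i<j\le N+1\ \text{and}\ j-i\le p+1,
\]
which contains both the consecutive pairs ($j-i=1$) and the lagged pairs ($j-i=p$). The base cases (reduced polynomials of degree $\le 1$) are immediate: these are $1$ and monic linear factors whose single roots are the increasing partial sums built from the positive $a_m$, so all short-range pairs trivially interlace. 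For the inductive step one applies the recurrence at index $N+1$, namely $\mathcal{Q}^{(N+2)}=\tau^{\varepsilon}\mathcal{Q}^{(N+1)}-a_{N+1}\mathcal{Q}^{(N+1-p)}$ with $\varepsilon\in\{0,1\}$ (here $\varepsilon=1$ precisely when $N+1\equiv p\pmod{p+1}$), and evaluates it at the zeros of each $\mathcal{Q}^{(N+2-r)}$, $1\le r\le p+1$. All the pairwise interlacings feeding the sign computation --- $\mathcal{Q}^{(N+2-r)}$ against $\mathcal{Q}^{(N+1)}$ and against $\mathcal{Q}^{(N+1-p)}$ --- have index gaps at most $p$ and are supplied by $T(N)$, while positivity of $a_{N+1}$ and of $\tau$ on $(a_0,b_0)$ (recall $a_0\ge 0$, so $\tau>0$ there) control the signs. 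At the two extreme values $r=1$ and $r=p+1$ one of the two summands vanishes, so these are clean.

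The delicate part, and the \textbf{main obstacle}, is \emph{orientation}. For the intermediate values $2\le r\le p$ both terms of $\tau^{\varepsilon}\mathcal{Q}^{(N+1)}-a_{N+1}\mathcal{Q}^{(N+1-p)}$ survive at the zeros of $\mathcal{Q}^{(N+2-r)}$, and to conclude that the combination alternates it is not enough that each summand alternate there: the two alternation patterns must be \emph{in phase}, so that the terms reinforce rather than cancel. Making this precise requires carrying an oriented form of interlacing through the induction --- equivalently, checking that the reduced polynomials form a consistent Sturm-type sequence --- and tracking the signs across the degree jump occurring at the indices with $\ell=p$, where the factor $\tau$ enters. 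Once $T$ is established, strict interlacing of $\mathcal{Q}^{(n)}$ and $\mathcal{Q}^{(n+1)}$ follows, and transporting it back along the increasing bijections of the first paragraph gives, on each ray and for every $n\ge p+1$, exactly one nonzero root of $Q_{n+1}$ between consecutive nonzero roots of $Q_n$ and vice versa.
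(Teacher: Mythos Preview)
The paper does not prove this from scratch: it simply cites Theorem~2.2 of Romdhane~\cite{Romd}, which gives the interlacing of zeros for any sequence generated by \eqref{generalrecurrence}--\eqref{initialconditions} with all $a_n>0$. With Theorem~\ref{thm:positivityrecurrencecoeff} already in hand, the corollary follows at once.

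Your reduction to the reduced polynomials $\mathcal{Q}^{(n)}=\psi_{n,0}$ on $(a_0,b_0)$ via \eqref{Qd} and the reduced recurrence \eqref{threetermrecsecondkindpequena1}--\eqref{threetermrecsecondkindpequena2} is correct, and the thick-chain induction $T(N)$ is a sensible scheme---it is essentially what one must do to reproduce Romdhane's result. But the argument is incomplete exactly where you say it is. For the intermediate gaps $2\le r\le p$ you need the sign patterns of $\tau^{\varepsilon}\mathcal{Q}^{(N+1)}$ and $-a_{N+1}\mathcal{Q}^{(N+1-p)}$ at the ordered zeros of $\mathcal{Q}^{(N+2-r)}$ to be \emph{in phase}, and unoriented interlacing of each summand with $\mathcal{Q}^{(N+2-r)}$ does not force that: two alternating sign sequences can start with opposite signs, and then the combination need not alternate at all. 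Closing this gap requires strengthening the inductive hypothesis to an oriented statement---for instance, fixing the sign of $\mathcal{Q}^{(j)}$ at the largest zero of $\mathcal{Q}^{(i)}$ for every relevant pair $(i,j)$, or equivalently a Wronskian-type positivity---and then verifying that this orientation is preserved by the recurrence, including across the degree jump at $\ell=p$ where the factor $\tau$ enters. Until that is written out, what you have is an outline of Romdhane's proof rather than a proof.
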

\begin{proof}
This interlacing property is a consequence of \eqref{threetermrec}--\eqref{eq:firstQns} and
the positivity of the recurrence coefficients, as it was shown in Theorem 2.2 from \cite{Romd}.
\end{proof}

We remark that for each $0\leq k\leq p-1$, the zeros of the polynomials $P_{n,k}$ and $P_{n+1,k}$ interlace on $(a_{k},b_{k})$. This result is proved in \cite[Theorem 3.1]{LopLop}. 

\section{Normalization}

In this section we introduce a convenient normalization of the polynomials
$P_{n,k}$ and the functions $H_{n,k}$.

It follows from the definition of the functions $H_{n,k}$ and the polynomials
$P_{n,k}$ that the measures
\[
\frac{H_{n,k}(\tau)\,d\sigma_{n,k}(\tau)}{P_{n,k-1}(\tau)\,P_{n,k+1}(\tau)},\qquad 0\leq k\leq p-1,
\]
have constant sign on the interval $[a_{k}, b_{k}]$. We then denote by
\[
\frac{|H_{n,k}(\tau)|\,d|\sigma_{n,k}|(\tau)}{|P_{n,k-1}(\tau)\,P_{n,k+1}(\tau)|
}
\]
the positive normalization of this measure and we have
\begin{equation}\label{eq:normvaryorthog:Pnk}
\int_{a_{k}}^{b_{k}}P_{n,k}(\tau)\,\tau^{s}\,\frac{|H_{n,k}(\tau)|\,d|\sigma_{n,
k}|(\tau)}{|P_{n,k-1}(\tau)\,P_{n,k+1}(\tau)|}=0,\quad
s=0,\ldots,Z(n,k)-1,\quad k=0,\ldots,p-1.
\end{equation}

Let
\begin{align}
K_{n,-1} & :=1,\quad K_{n,p}:=1,\label{eq:def:Knm1}\\
K_{n,k} & :=
\left(\int_{a_{k}}^{b_{k}}P_{n,k}^{2}(\tau)\,\frac{|H_{n,k}(\tau)|\,d|\sigma_{n,
k}|(\tau)}{|P_{n,k-1}(\tau)\,P_{n,k+1}(\tau)|}\right)^{-1/2},\qquad
k=0,\ldots,p-1,\label{eq:def:Knk}
\end{align}
and we also define the constants
\begin{equation}\label{def:kappank}
\kappa_{n,k}:=\frac{K_{n,k}}{K_{n,k-1}},\qquad k=0,\ldots,p.
\end{equation}

\begin{definition}
For $k=0,\ldots,p,$ we define
\begin{align}
p_{n,k} & :=\kappa_{n,k}\,P_{n,k},\label{eq:def:lpnk}\\
h_{n,k} & := K_{n,k-1}^{2}\,H_{n,k},\label{eq:def:hnk}
\end{align}
where the constants $\kappa_{n,k}$ and $K_{n,k}$ are given in
\eqref{def:kappank} and \eqref{eq:def:Knm1}--\eqref{eq:def:Knk}, respectively.
\end{definition}

We will denote by $\nu_{n,k}$ the measure on $[a_{k},b_{k}]$ given by
\[
d\nu_{n,k}(\tau):=\frac{h_{n,k}(\tau)\,d\sigma_{n,k}(\tau)}{P_{n,k-1}(\tau)\,P_{
n,k+1}(\tau)},\qquad k=0,\ldots,p-1.
\]
Again this measure has constant sign in $[a_{k},b_{k}]$, and we will denote by
$\varepsilon_{n,k}$ its sign and by $|\nu_{n,k}|$ its positive normalization,
hence
\begin{equation}\label{positivenunk}
d|\nu_{n,k}|(\tau)=\frac{|h_{n,k}(\tau)|\,d|\sigma_{n,k}|(\tau)}{|P_{n,k-1}
(\tau)\,P_{n,k+1}(\tau)|}=\varepsilon_{n,k}\,\frac{h_{n,k}(\tau)\,d\sigma_{n,k}
(\tau)}{P_{n,k-1}(\tau)\,P_{n,k+1}(\tau)}.
\end{equation}

\begin{proposition}
For each $k=0,\ldots,p-1,$ the polynomial $p_{n,k}$ defined in
\eqref{eq:def:lpnk} satisfies the following:
\begin{align}
\int_{a_{k}}^{b_{k}} p_{n,k}(\tau)\,\tau^{s}\,d|\nu_{n,k}|(\tau) & =0,\qquad
s=0,\ldots,Z(n,k)-1,\label{orthog:lpnk}\\
\int_{a_{k}}^{b_{k}} p_{n,k}^{2}(\tau)\,d|\nu_{n,k}|(\tau) &
=1,\label{orthonorm:lpnk}
\end{align}
that is, $p_{n,k}$ is the orthonormal polynomial of degree $Z(n,k)$ with
respect to the positive measure $|\nu_{n,k}|$.

For each $k=1,\ldots,p$, the function $h_{n,k}$ defined in \eqref{eq:def:hnk}
satisfies
\begin{equation}\label{intrep:hnk}
h_{n,k}(z)=\begin{cases}
\varepsilon_{n,k-1}\int_{a_{k-1}}^{b_{k-1}}\frac{p_{n,k-1}^{2}(\tau)}{z-\tau}\,
d|\nu_{n,k-1}|(\tau) & \mathrm{if}\ k\leq \ell,\\[1em]
\varepsilon_{n,k-1}\,z\int_{a_{k-1}}^{b_{k-1}}\frac{p_{n,k-1}^{2}(\tau)}{z-\tau}
\,d|\nu_{n,k-1}|(\tau) & \mathrm{if}\ k>\ell.
\end{cases}
\end{equation}
\end{proposition}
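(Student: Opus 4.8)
The plan is to verify all three claims by direct substitution of the normalizing definitions \eqref{eq:def:Knk}--\eqref{eq:def:hnk}, reducing each to an identity already established for the unnormalized objects $P_{n,k}$ and $H_{n,k}$. No new analytic input is needed beyond the orthogonality \eqref{eq:normvaryorthog:Pnk} and the integral representation \eqref{intrep:Hnk}; the whole matter is the careful cancellation of the constants $K_{n,k}$ and $\kappa_{n,k}$.

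First I would dispose of \eqref{orthog:lpnk}. Writing $h_{n,k}=K_{n,k-1}^2 H_{n,k}$ and using $K_{n,k-1}^2>0$, one gets $d|\nu_{n,k}|=K_{n,k-1}^2\,|H_{n,k}|\,d|\sigma_{n,k}|/|P_{n,k-1}P_{n,k+1}|$. Hence $\int p_{n,k}\,\tau^s\,d|\nu_{n,k}|$ equals $\kappa_{n,k}K_{n,k-1}^2$ times the left-hand side of \eqref{eq:normvaryorthog:Pnk}, which vanishes for $s=0,\ldots,Z(n,k)-1$. For \eqref{orthonorm:lpnk} I would use the same substitution together with the defining formula \eqref{eq:def:Knk}: the integral of $p_{n,k}^2$ becomes $\kappa_{n,k}^2 K_{n,k-1}^2 K_{n,k}^{-2}$, which equals $1$ since $\kappa_{n,k}=K_{n,k}/K_{n,k-1}$. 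As $P_{n,k}$ is monic of degree $Z(n,k)$ by Proposition \ref{prop:exactnumberzeros}, these two relations identify $p_{n,k}$ as the orthonormal polynomial of that degree for the positive measure $|\nu_{n,k}|$.

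For the integral representation \eqref{intrep:hnk} I would start from \eqref{intrep:Hnk} and multiply through by $K_{n,k-1}^2$. Inside the integrand I would rewrite the three unnormalized factors in normalized form: $P_{n,k-1}^2=p_{n,k-1}^2/\kappa_{n,k-1}^2$, and $H_{n,k-1}\,d\sigma_{n,k-1}/(P_{n,k-2}P_{n,k})=K_{n,k-2}^{-2}\,d\nu_{n,k-1}=\varepsilon_{n,k-1}K_{n,k-2}^{-2}\,d|\nu_{n,k-1}|$, the last equality using $d\nu_{n,k-1}=\varepsilon_{n,k-1}\,d|\nu_{n,k-1}|$. The scalar prefactor then collapses to $\varepsilon_{n,k-1}$, because $K_{n,k-1}^2/(\kappa_{n,k-1}^2 K_{n,k-2}^2)=1$ by $\kappa_{n,k-1}=K_{n,k-1}/K_{n,k-2}$. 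The two cases $k\le\ell$ and $k>\ell$ differ only by the factor $z$ already present in \eqref{intrep:Hnk}, so both follow at once.

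The only point requiring care is the boundary value $k=1$, where $P_{n,k-2}=P_{n,-1}\equiv 1$, $H_{n,0}\equiv 1$, and $K_{n,-1}=1$; I would check that with these conventions $d\nu_{n,0}=d\sigma_{n,0}/P_{n,1}$ and $h_{n,0}=1$, so the same collapse of constants goes through verbatim. Apart from this boundary bookkeeping, the argument is a routine manipulation of the normalization constants, and I expect no genuine obstacle.
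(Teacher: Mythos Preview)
Your proposal is correct and follows exactly the same approach as the paper: direct substitution of the definitions \eqref{eq:def:lpnk}--\eqref{eq:def:hnk} and \eqref{def:kappank} into the unnormalized identities \eqref{eq:normvaryorthog:Pnk}, \eqref{eq:def:Knk}, and \eqref{intrep:Hnk}, followed by cancellation of the constants $K_{n,k}$. The paper's own proof is in fact much terser---it simply declares the results ``obvious'' and ``immediate'' from those same references---so your explicit tracking of the constants and the boundary check at $k=1$ are more detailed than what the paper provides, but not different in substance.
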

\begin{proof}
The orthogonality conditions \eqref{orthog:lpnk} are obvious in view of
\eqref{eq:normvaryorthog:Pnk}. The formulas \eqref{orthonorm:lpnk} and
\eqref{intrep:hnk} follow immediately from
\eqref{eq:def:lpnk}--\eqref{eq:def:hnk}, \eqref{def:kappank},
\eqref{eq:def:Knm1}--\eqref{eq:def:Knk} and \eqref{intrep:Hnk}.
\end{proof}

\section{Zero asymptotic distribution}
\subsection{Definitions and results}\label{subsectionzerodef}

In this section we investigate the zero asymptotic distribution of the
polynomials $Q_{n}$. This distribution will be described in terms of a vector
equilibrium problem for logarithmic potentials. Before describing this problem,
let us introduce some definitions and notations.

Let $E_{k}$, $k=0,\ldots,p-1,$ be a system of compact subsets of the real line
satisfying
\begin{equation}\label{eq:condvep:1}
E_{k}\cap E_{k+1}=\emptyset,\qquad k=0,\ldots,p-2.
\end{equation}
We assume that
\begin{equation}\label{eq:condvep:2}
\mathrm{cap}(E_{k})>0,\qquad k=0,\ldots,p-1,
\end{equation}
where $\mathrm{cap}(E)$ denotes the logarithmic capacity of a compact set $E$.
A vector measure
\[
\vec{\nu}=(\nu_{0},\nu_{1},\ldots,\nu_{p-1})
\]
is called \emph{admissible} if
\begin{itemize}
\item[1)] $\nu_{k}$ is a positive Borel measure supported on $E_{k}$ for all
$k=0,\ldots,p-1$;
\item[2)] $\nu_{k}$ has total mass $\|\nu_{k}\|=1-\frac{k}{p}$ for all
$k=0,\ldots,p-1$.
\end{itemize}
We denote by $\mathcal{M}$ the class of all admissible vector measures.

Given a pair of compactly supported measures $\nu_1$, $\nu_{2}$, let
$I(\nu_{1})$ and $I(\nu_{1},\nu_{2})$ denote, respectively,
the logarithmic energy of $\nu_{1}$ and the mutual logarithmic energy of
$\nu_{1}$ and $\nu_{2}$ defined by
\[
I(\nu_{1})=\iint\log\frac{1}{|x-y|}\,d\nu_{1}(x)\,d\nu_{1}(y),\qquad
I(\nu_{1},\nu_{2})=\iint\log\frac{1}{|x-y|}\,d\nu_{1}(x)\,d\nu_{2}(y).
\]
On the class of admissible vector measures
$\vec{\nu}=(\nu_{0},\ldots,\nu_{p-1})$ we consider the energy functional $J$
defined by
\begin{equation}\label{def:energyfunc}
J(\vec{\nu}):=\sum_{k=0}^{p-1} I(\nu_{k})-\sum_{k=0}^{p-2}I(\nu_{k},\nu_{k+1}).
\end{equation}
Observe that $J$ is well-defined and $J(\vec{\nu})\in(-\infty,+\infty]$ for all
$\vec{\nu}\in\mathcal{M}$. This type of energy interaction is typical in the
study of Nikishin systems on the real line.

The vector equilibrium problem that is relevant in this work is the problem of
finding an extremal vector measure $\vec{\mu}\in\mathcal{M}$ that satisfies
\begin{equation}\label{equilprop}
J(\vec{\mu})=\inf_{\vec{\nu}\in\mathcal{M}} J(\vec{\nu})<\infty.
\end{equation}
Such a measure exists and is unique, see \cite{NikSor} for a proof of this fact
and several other important results on logarithmic vector equilibrium problems
in the complex plane. The extremal measure $\vec{\mu}$ is the vector
\emph{equilibrium measure}.

The vector equilibrium measure can be characterized in terms of certain
equilibrium conditions that we describe next. Given a vector measure
$\vec{\nu}=(\nu_{0},\ldots,\nu_{p-1})\in\mathcal{M}$, we consider the combined
potentials $W_{k}^{\vec{\nu}}$ defined by
\begin{equation}\label{def:combpot}
W_{k}^{\vec{\nu}}(z)=U^{\nu_{k}}(z)-\frac{1}{2} U^{\nu_{k-1}}(z)-\frac{1}{2}
U^{\nu_{k+1}}(z),\qquad k=0,\ldots,p-1,
\end{equation}
where $U^{\nu}$ denotes the logarithmic potential associated with $\nu$, i.e.,
\[
U^{\nu}(z)=\int\log\frac{1}{|z-t|}\,d\nu(t),
\]
and in \eqref{def:combpot} we understand $U^{\nu_{-1}}\equiv 0$,
$U^{\nu_{p}}\equiv 0$. The following result is an adaptation of a well-known
result in the theory of logarithmic vector equilibrium problems, see
\cite{NikSor}.

\begin{lemma}\label{lemma:varcond}
The measure $\vec{\mu}=(\mu_{0},\ldots,\mu_{p-1})\in\mathcal{M}$ is the 
equilibrium measure satisfying \eqref{equilprop} if and only if there exist finite
constants $\{w_{k}\}_{k=0}^{p-1}$ such that for every $k=0,\ldots,p-1$ the
following conditions hold:
\begin{align}
W_{k}^{\vec{\mu}}(x) & \leq w_{k},\qquad \mbox{for
all}\,\,x\in\supp(\mu_{k}),\label{eq:eqcond:1}\\
W_{k}^{\vec{\mu}}(x) & \geq  w_{k},\qquad \mbox{for q.e.}\,\,x\in
E_{k}.\label{eq:eqcond:2}
\end{align}
\end{lemma}

Let $E\subset\mathbb{C}$ be a compact set, let $\{\nu_{n}\}_{n}$ be a sequence
of finite positive measures supported on $E$, and let $\nu$ be another finite
positive measure on $E$. We write
\[
\nu_{n}\xrightarrow[n\rightarrow\infty]{*}\nu
\]
if for every $f\in C(E)$,
\[
\lim_{n\rightarrow\infty}\int f\, d\nu_{n}=\int f\, d\nu,
\]
i.e., when the sequence of measures converges to $\nu$ in the weak-star
topology. Given a polynomial $P$ of degree $n\geq 1$, we denote the associated
normalized zero counting measure by
\[
\mu_{P}=\frac{1}{n}\sum_{P(z)=0}\delta_{z},
\]
where $\delta_{z}$ is the Dirac mass at $z$ (in the sum the zeros are repeated
according to their multiplicity).

The weak asymptotic result that we present in this paper is obtained under mild
assumptions on the measures $\sigma$ generating the Nikishin system. One of
these assumptions is the so-called regularity of the measures in the sense of
Stahl and Totik. A measure $\sigma$ is said to be in the class \textbf{Reg} if
\begin{equation*}
\lim_{n\rightarrow\infty}\|\pi_{n}\|_{L^{2}(\sigma)}^{1/n}=\mathrm{cap}(\supp(\sigma)),
\end{equation*}
where $\pi_{n}$ denotes the $n$th monic orthogonal polynomial associated with
the measure $\sigma$.

We refer the reader to \cite{StahlTotik} for a detailed analysis of the
orthogonal polynomials associated with measures in the class \textbf{Reg}.
It is well-known that the regularity assumption is indeed a mild condition. For
instance, measures $\sigma$ supported on a compact interval
$I\subset\mathbb{R}$ on which $\sigma'(x)>0$ a.e. are regular.

Let $E\subset\mathbb{C}$ be a compact set with $\mathrm{cap}(E)>0$ and let
$\varphi$ be a continuous function on $E$. Recall that the equilibrium measure
$\overline{\mu}$ in the presence of the external field $\varphi$ is the unique
probability measure that minimizes the energy functional $I(\mu)+2\int\varphi
\,d\mu$ among all probability measures on $E$, cf. \cite{SaffTotik}. The
equibrium measure $\overline{\mu}$ satisfies
\begin{equation}\label{eqcond:extfield}
U^{\overline{\mu}}(z)+\varphi(z)\begin{cases}
\leq w, & \mbox{for all}\,\,z\in\supp(\overline{\mu}),\\
\geq w, & \mbox{for q.e.}\,\,z\in E,
\end{cases}
\end{equation}
for some constant $w$ (called the equilibrium constant). These equilibrium
conditions also characterize the equilibrium measure, and we emphasize that if
$E$ is regular with respect to the Dirichlet problem, then in
\eqref{eqcond:extfield} the first inequality can be replaced by an equality and
the second inequality holds for all $z\in E$.

We will need the following auxiliary result concerning the zero asymptotic
distribution of a sequence of orthogonal polynomials with respect to varying
measures.

\begin{lemma}\label{lemma:varyingmeas}
Let $\sigma\in\textbf{Reg}$, $E=\supp(\sigma)\subset\mathbb{R}$, where $E$ is
regular with respect to the Dirichlet problem. Let $\{\phi_{l}\}$,
$l\in\Lambda\subset\mathbb{Z}_{+}$, be a sequence of positive continuous
functions on $E$ such that
\[
\lim_{l\in\Lambda}\frac{1}{2l}\,\log\frac{1}{|\phi_{l}(x)|}=\varphi(x)>-\infty,
\]
uniformly on $E$. Let $q_{l}$, $l\in\Lambda$, be a sequence of monic
polynomials such that $\deg q_{l}=l$ and
\[
\int x^{k}\,q_{l}(x)\,\phi_{l}(x)\,d\sigma(x)=0,\qquad k=0,\ldots,l-1.
\]
Then
\[
\mu_{q_{l}}\xrightarrow[l\in\Lambda]{*}\overline{\mu}
\]
and
\[
\lim_{l\in\Lambda}\left(\int|q_{l}(x)|^{2}\,\phi_{l}(x)\,d\sigma(x)\right)^{1/2l
}=e^{-w},
\]
where $\overline{\mu}$ and $w$ are the equilibrium measure and equilibrium
constant in the presence of the external field $\varphi$ on $E$.
\end{lemma}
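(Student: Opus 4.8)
The plan is to reduce the asymptotic zero distribution of the varying orthogonal polynomials $q_l$ to the classical theory of equilibrium measures in an external field, exploiting the regularity hypothesis $\sigma\in\textbf{Reg}$ together with the uniform convergence of the weights $\phi_l$. First I would record the two elementary extremal characterizations at play. On one hand, $q_l$ is the monic $L^2$-minimizer for the varying measure $\phi_l\,d\sigma$, so for any competing monic polynomial $P$ of degree $l$,
\[
\int |q_l(x)|^2\,\phi_l(x)\,d\sigma(x)\le \int |P(x)|^2\,\phi_l(x)\,d\sigma(x).
\]
On the other hand, the quantity to be controlled is $\left(\int|q_l|^2\phi_l\,d\sigma\right)^{1/2l}$, whose logarithm, after writing $\tfrac{1}{2l}\log\tfrac1{|\phi_l|}\to\varphi$ uniformly, is governed by weighted energy. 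The strategy is the standard two-sided estimate: an upper bound obtained by inserting a near-optimal test polynomial built from the weighted equilibrium measure, and a lower bound obtained from a potential-theoretic (``$n$th-root'') inequality valid for \emph{all} polynomials.

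For the upper bound I would take $P$ to be a monic polynomial whose normalized zero counting measure approximates $\overline{\mu}$, the equilibrium measure in the external field $\varphi$ on $E$; using \eqref{eqcond:extfield} and the regularity of $E$ with respect to the Dirichlet problem (so that the equilibrium inequality holds everywhere on $E$ and the first inequality is an equality on the support), one estimates $\tfrac{1}{2l}\log\big(|P(x)|^2\phi_l(x)\big)$ uniformly from above by the equilibrium constant $w$ up to $o(1)$, giving $\limsup_l(\int|q_l|^2\phi_l\,d\sigma)^{1/2l}\le e^{-w}$. For the lower bound I would invoke the regularity $\sigma\in\textbf{Reg}$: it guarantees that the $L^2(\sigma)$-norm of \emph{any} sequence of monic polynomials is, to $n$th-root order, bounded below by $\mathrm{cap}$-type quantities, equivalently that $\tfrac1{2l}\log|q_l(x)|^2 + \tfrac{1}{2l}\log\tfrac1{|\phi_l|}\to U^{\mu_{q_l}}$-comparisons hold; combined with the external-field equilibrium inequality this yields $\liminf_l(\int|q_l|^2\phi_l\,d\sigma)^{1/2l}\ge e^{-w}$. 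Together these give the norm limit. The weak-star convergence $\mu_{q_l}\to\overline{\mu}$ then follows by a standard argument: any weak-star limit point $\mu_*$ of the (tight, since supported on the fixed compact $E$) sequence $\{\mu_{q_l}\}$ must, by lower semicontinuity of energy and the sharpness of the norm asymptotics, satisfy the variational inequalities \eqref{eqcond:extfield} that uniquely characterize $\overline{\mu}$, forcing $\mu_*=\overline{\mu}$ and hence convergence of the whole sequence.

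The main obstacle is the lower bound, where one must convert the regularity hypothesis on the \emph{fixed} measure $\sigma$ into a uniform lower estimate for polynomials weighted by the \emph{varying} factors $\phi_l$. The delicate point is that regularity controls the growth of orthogonal polynomials only on a set of full capacity (the inequalities in \textbf{Reg} and in \eqref{eqcond:extfield} are ``quasi-everywhere''), so I must be careful that the exceptional sets do not interfere with the integration against $\sigma$; this is precisely where the Dirichlet-regularity of $E$ and the principle that $U^{\mu_{q_l}}$ cannot exceed its equilibrium value except on a polar set are used to upgrade quasi-everywhere inequalities to the needed integral estimate. I expect this step to essentially reproduce the Gonchar--Rakhmanov/Stahl--Totik machinery for orthogonal polynomials with respect to varying weights, so I would cite \cite{StahlTotik} and \cite{SaffTotik} for the underlying principle of descent and the relevant lower envelope theorem rather than reprove them.
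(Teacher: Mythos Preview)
The paper does not actually prove this lemma: immediately after stating it, the authors write that ``the above result was proved in \cite{FidLopLopSor}'' and note that it generalizes a Gonchar--Rakhmanov theorem \cite{GonRakh}. So there is nothing to compare against on the paper's side beyond a citation.

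Your outline is the standard Gonchar--Rakhmanov/Stahl--Totik argument that one finds in those references: an upper bound for the $L^2$-norm via a test polynomial with zeros approximating $\overline{\mu}$, a lower bound coming from the regularity of $\sigma$ combined with the principle of descent/lower envelope theorem, and then identification of any weak-star limit point of $\mu_{q_l}$ with $\overline{\mu}$ via the variational characterization \eqref{eqcond:extfield}. You have correctly isolated the only genuinely nontrivial step, namely transferring the $n$th-root lower bound guaranteed by $\sigma\in\textbf{Reg}$ to the varying-weight setting, and you are right that Dirichlet regularity of $E$ is what lets one pass from quasi-everywhere inequalities to pointwise ones on $\supp(\overline{\mu})$. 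Since the paper itself simply imports the result from \cite{FidLopLopSor}, citing that reference (or \cite{GonRakh}, \cite{StahlTotik}) in lieu of a full proof is entirely appropriate here; your sketch is consistent with what those sources do.
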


The above result was proved in \cite{FidLopLopSor}. It is a generalization of a
result of Gonchar and Rakhmanov \cite{GonRakh} obtained under the more
restrictive assumption that $\supp(\sigma)$ is an interval on which $\sigma'>0$
a.e.

In the following asymptotic results, the measures $\mu_k$ are the components of
the vector equilibrium measure  $\vec{\mu}=(\mu_{0},\ldots,\mu_{p-1})$ that
minimizes the energy functional \eqref{def:energyfunc} on
the space $\mathcal{M}$ of all admissible vector measures supported on
$E_{k}=\supp(\sigma_{k}^{*})$, $k=0,\ldots,p-1$, and the constants $w_k$  are
the equilibrium constants satisfying the variational
conditions \eqref{eq:eqcond:1}-\eqref{eq:eqcond:2}.

\begin{theorem}\label{theo:zeroasympPnk}
Let $(s_{0},\ldots,s_{p-1})=\mathcal{N}(\sigma_{0},\ldots,\sigma_{p-1})$ be the
Nikishin system generated by the measures $\sigma_{0},\ldots,\sigma_{p-1}$.
Assume that for each $k=0,\ldots,p-1$,
the measure $\sigma_{k}$ satisfies $\sigma_{k}\in\textbf{Reg}$ and
$\supp(\sigma_{k})$ is regular for the Dirichlet problem. Then, for each 
$k=0,\ldots,p-1$, we have $\supp(\mu_{k})=\supp(\sigma_{k}^{*})$, and the following three limit relations hold: 
\begin{equation}\label{zeroasympPnk}
\mu_{P_{n,k}}\xrightarrow[n\rightarrow\infty]{*}\frac{p}{p-k}\,\mu_{k},
\end{equation} 
\begin{align}\label{lim:coeffgeometricmean}
\lim_{m\to\infty}\left(\prod_{j=1}^m a_{pj+k}\right)^{1/m}=
e^{-\frac{2p}{p+1}\sum_{j=0}^k w_j},
\end{align}
and 
\begin{align}\label{asymptoticspsink}
\lim_{n\to\infty}|\psi_{n,k}(z)|^{1/Z(n,0)}=e^{-U^{\mu_{k}}(z)+U^{\mu_{k-1}}
(z)-2\sum_{j=0}^{k-1}w_j}
\end{align}
uniformly on compact subsets of $\mathbb{C}\setminus
\left([a_{k-1},b_{k-1}]\cup[a_{k},b_{k}]\cup \{0\}\right)$. In \eqref{asymptoticspsink} we understand $U^{\mu_{-1}}, U^{\mu_{p}}\equiv 0$, and this formula is also valid for $k=p$.
\end{theorem}

We now state the corresponding asymptotic results on the stars $\Gamma_k$. For
each $k=0,\ldots,p-1$, let $\tilde{\mu}_k$ be the unique rotationally symmetric
measure supported on $\Gamma_k$ such that for every Borel set $E\subset
[a_k,b_k]$,
\begin{equation}\label{def:muktilde}
\tilde{\mu}_k\left(\left\{z:z^{p+1}\in E\right\}\right)=\mu_k(E).
\end{equation}

Let $\omega_{k,j}$, $j=0,\ldots,p$, be the $p+1$ distinct  roots of the
equation $z^{p+1}=(-1)^k$, numbered as usual in such a way that $0\leq \arg
\omega_{k,j}<\arg\omega_{k,j+1}<2\pi$. Then we  can write
$\Gamma_k=\cup_{j=0}^{p}\Gamma_{k,j}$, with
\[
\Gamma_{k,j}=\left\{z:z^{p+1}\in [a_k,b_k],\ z/\omega_{k,j}\geq 0\right\}.
\]

Then, for every Borel set $F\subset \Gamma_{k,j}$,
\[
\left.\tilde{\mu}_k\right|_{\Gamma_{k,j}}(F)=\frac{1}{p+1}\mu_k\left(\{z^{p+1}
:z\in F\}\right).
\]
\begin{corollary}\label{Coro:zeroasympQn}
For the zero counting measures $\mu_{Q_n}$ of the multi-orthogonal polynomials
$Q_n$, we have
\begin{equation}\label{zeroasympQn}
\mu_{Q_{n}}\xrightarrow[n\rightarrow\infty]{*}\tilde{\mu}_{0},
\end{equation}
and for every $k=0,\ldots,p$,
\begin{align}\label{asymptoticsPsink}
\lim_{n\to\infty}|\Psi_{n,k}(z)|^{1/n}=e^{-U^{\tilde{\mu}_{k}}(z)+U^{\tilde{\mu}
_{k-1}}(z)-\frac{2}{p+1}\sum_{j=0}^{k-1} w_j}
\end{align}
uniformly on compact subsets of $\mathbb{C}\setminus \left(\Gamma_k\cup
\Gamma_{k-1}\cup\{0\}\right)$. In \eqref{asymptoticsPsink} we understand $U^{\tilde{\mu}_{-1}}, U^{\tilde{\mu}_{p}}\equiv 0$.
\end{corollary}

The proofs of these asymptotic results  make use of the following auxiliary lemma.
\begin{lemma}\label{lemma:regularity}
Let $\sigma_{j}$ be a positive, rotationally symmetric measure on the star
$\Gamma_{j}=\{z\in\mathbb{C}: z^{p+1}\in[a_{j},b_{j}]\}$, for some
$j=0,\ldots,p-1$, and suppose that $\sigma_{j}\in\textbf{Reg}$. Then the
measures $d\sigma_{j}^{*}(\tau)$ and $|\tau|\,d\sigma_{j}^{*}(\tau)$ on
$[a_{j}, b_{j}]$, where $\sigma_{j}^{*}$ is defined in \eqref{def:sigma:star},
are also in the class \textbf{Reg}.
\end{lemma}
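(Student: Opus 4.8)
The plan is to establish the two regularity statements separately, in each case reducing to the regularity of $\sigma_{j}$ itself through the change of variables $\tau=z^{p+1}$ together with the classical behaviour of logarithmic capacity under the monic map $T(z)=z^{p+1}$. Throughout I would keep careful track of the distinction between the interval $[a_{j},b_{j}]$ and the actual supports $\supp(\sigma_{j})\subset\Gamma_{j}$ and $\supp(\sigma_{j}^{*})\subset[a_{j},b_{j}]$, since regularity is always measured against the genuine support.

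First I would treat $\sigma_{j}^{*}$. The starting point is that the rotational symmetry $d\sigma_{j}(\omega z)=d\sigma_{j}(z)$ forces the monic orthogonal polynomial $\pi_{m(p+1)}^{\sigma_{j}}$ to have the form $\pi_{m(p+1)}^{\sigma_{j}}(z)=g_{m}(z^{p+1})$ for a monic polynomial $g_{m}$ of degree $m$. Indeed, since $\omega^{m(p+1)}=1$, the polynomial $\pi_{m(p+1)}^{\sigma_{j}}(\omega z)$ is again monic of degree $m(p+1)$ and, by the invariance of $\sigma_{j}$, satisfies the same orthogonality relations, so by uniqueness $\pi_{m(p+1)}^{\sigma_{j}}(\omega z)=\pi_{m(p+1)}^{\sigma_{j}}(z)$; comparing coefficients shows that only powers $z^{k}$ with $k\equiv 0\pmod{p+1}$ survive. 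By the defining property of the push-forward, $\|P(z^{p+1})\|_{L^{2}(\sigma_{j})}=\|P\|_{L^{2}(\sigma_{j}^{*})}$ for every polynomial $P$, and since the minimal $L^{2}(\sigma_{j})$-norm among monic polynomials of degree $m(p+1)$ is attained at $\pi_{m(p+1)}^{\sigma_{j}}=g_{m}(z^{p+1})$, a polynomial lying in the subclass $\{P(z^{p+1})\}$, the factor $g_{m}$ must coincide with the monic orthogonal polynomial of $\sigma_{j}^{*}$ of degree $m$, giving
\[
\|\pi_{m}^{\sigma_{j}^{*}}\|_{L^{2}(\sigma_{j}^{*})}=\|\pi_{m(p+1)}^{\sigma_{j}}\|_{L^{2}(\sigma_{j})}.
\]
Taking $m$th roots and using the regularity of $\sigma_{j}$ along the subsequence $n=m(p+1)$ yields $\lim_{m}\|\pi_{m}^{\sigma_{j}^{*}}\|_{L^{2}(\sigma_{j}^{*})}^{1/m}=\mathrm{cap}(\supp(\sigma_{j}))^{p+1}$. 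To identify this limit with $\mathrm{cap}(\supp(\sigma_{j}^{*}))$ I would invoke the capacity formula for polynomial preimages: because $\sigma_{j}$ is rotationally symmetric its support satisfies $\supp(\sigma_{j})=T^{-1}(T(\supp(\sigma_{j})))$ with $T(\supp(\sigma_{j}))=\supp(\sigma_{j}^{*})$, and for the monic polynomial $T$ of degree $p+1$ one has $\mathrm{cap}(T^{-1}(S))=\mathrm{cap}(S)^{1/(p+1)}$. Hence $\mathrm{cap}(\supp(\sigma_{j}))^{p+1}=\mathrm{cap}(\supp(\sigma_{j}^{*}))$, which is exactly the assertion $\sigma_{j}^{*}\in\textbf{Reg}$.

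For the weighted measure $d\rho(\tau)=|\tau|\,d\sigma_{j}^{*}(\tau)$ I would argue by comparison, using the regularity of $\sigma_{j}^{*}$ just obtained. Writing $\pi_{n}^{\rho}$ for the monic orthogonal polynomials of $\rho$ and setting $c:=\max_{[a_{j},b_{j}]}|\tau|$, the extremal (minimal $L^{2}$-norm) property gives
\[
\|\pi_{n}^{\rho}\|_{L^{2}(\rho)}\le\|\pi_{n}^{\sigma_{j}^{*}}\|_{L^{2}(\rho)}\le\sqrt{c}\,\|\pi_{n}^{\sigma_{j}^{*}}\|_{L^{2}(\sigma_{j}^{*})},
\]
so that $\limsup_{n}\|\pi_{n}^{\rho}\|_{L^{2}(\rho)}^{1/n}\le\mathrm{cap}(\supp(\sigma_{j}^{*}))$. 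Since $|\tau|$ is strictly positive off the single point $\tau=0$, the support of $\rho$ differs from $\supp(\sigma_{j}^{*})$ by at most that point, which has zero capacity, so $\mathrm{cap}(\supp(\rho))=\mathrm{cap}(\supp(\sigma_{j}^{*}))$. Combining the upper bound with the universal lower bound $\liminf_{n}\|\pi_{n}^{\rho}\|_{L^{2}(\rho)}^{1/n}\ge\mathrm{cap}(\supp(\rho))$, valid for every measure \cite{StahlTotik}, forces equality and hence $\rho\in\textbf{Reg}$.

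The hard part will be the capacity bookkeeping in the first step: one must justify that the rotational symmetry of $\sigma_{j}$ makes the preimage formula applicable to the \emph{true} supports rather than to the full star $\Gamma_{j}$ or interval $[a_{j},b_{j}]$ (on which $\sigma_{j}^{*}$ need not be supported), and one must handle the possibility that the weight $|\tau|$ vanishes at an endpoint $0\in\{a_{j},b_{j}\}$ by exploiting that a single point is capacity-negligible. The remaining verifications — the symmetry of $\pi_{m(p+1)}^{\sigma_{j}}$, the push-forward norm identity, and the comparison estimates — are routine.
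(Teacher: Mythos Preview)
Your proposal is correct and follows essentially the same approach as the paper for the regularity of $\sigma_{j}^{*}$: the paper likewise shows $\pi_{m(p+1)}^{\sigma_{j}}(z)=L_{m}(z^{p+1})$ by rotational symmetry, identifies $L_{m}$ as the monic orthogonal polynomial for $\sigma_{j}^{*}$, transfers the $L^{2}$-norm via the push-forward, and invokes $\mathrm{cap}(\supp(\sigma_{j}))^{p+1}=\mathrm{cap}(\supp(\sigma_{j}^{*}))$.

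For the weighted measure $|\tau|\,d\sigma_{j}^{*}(\tau)$ there is a small but genuine difference in how the lower bound is obtained. You cite the universal Stahl--Totik inequality $\liminf_{n}\|\pi_{n}^{\rho}\|_{L^{2}(\rho)}^{1/n}\ge\mathrm{cap}(\supp(\rho))$, while the paper stays entirely within elementary extremality: writing $l_{n}$ and $L_{n}$ for the monic orthogonal polynomials of $\lambda:=|\tau|\,d\sigma_{j}^{*}$ and $\sigma_{j}^{*}$, it sandwiches
\[
b_{j}^{-1}\|L_{n+1}\|_{L^{2}(\sigma_{j}^{*})}^{2}\le b_{j}^{-1}\!\int |l_{n}(\tau)|^{2}\tau^{2}\,d\sigma_{j}^{*}(\tau)\le\|l_{n}\|_{L^{2}(\lambda)}^{2}\le\int |L_{n}|^{2}\,d\lambda\le b_{j}\|L_{n}\|_{L^{2}(\sigma_{j}^{*})}^{2},
\]
the left inequality coming from the fact that $\tau\,l_{n}(\tau)$ is a monic polynomial of degree $n+1$. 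This is marginally more self-contained; on the other hand, your argument is a bit more careful about the support, explicitly noting that $\supp(\rho)$ and $\supp(\sigma_{j}^{*})$ can differ only at the capacity-zero point $\tau=0$.
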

\begin{proof}
We begin by observing that, since
 $
\supp(\sigma_j)=\{z:z^{p+1}\in \supp(\sigma^*_j)\}
$, we have \cite[Thm. 5.2.5]{Ransford}
\[
[\mathrm{cap}(\supp(\sigma_{j}))]^{p+1}=\mathrm{cap}(\supp(\sigma_{j}^{*})).
\]

Let $\pi_{n}$ be the $n$th monic orthogonal polynomial associated with the
measure $\sigma_{j}$. Then $\sigma_{j}\in\textbf{Reg}$ means that
\begin{equation}\label{regularitysigmaj}
\lim_{n\rightarrow\infty}\|\pi_{n}\|_{L_2(\sigma_j)}^{1/n}=\mathrm{cap}(\supp(\sigma_{j})).
\end{equation}
By the rotational symmetry of $\sigma_{j}$, the monic polynomial
$\omega^{-n}\pi_{n}(\omega z)$ (where $\omega=e^{\frac{2\pi i}{p+1}}$) is also orthogonal with respect to $\sigma_{j}$, and therefore, for every integer
$m\geq 0$, $
\pi_{m(p+1)}(z)=L_{m}(z^{p+1})
$, where $L_m$ is the $m$th monic orthogonal polynomial with respect to $\sigma_{j}^{*}$. Moreover, 
$\|\pi_{m(p+1)}\|_{L^{2}(\sigma_{j})}^{2}= \|L_m\|_{L^{2}
(\sigma_{j}^{*})}^{2}$, and so from \eqref{regularitysigmaj} it follows that
\[
\lim_{m\rightarrow\infty}\|L_m\|_{L^{2}(\sigma_{j}^{*})}^{1/m}
=[\mathrm{
cap}(\supp(\sigma_{j}))]^{p+1}=\mathrm{cap}(\supp(\sigma_{j}^{*})).
\]
This proves that $\sigma_{j}^{*}\in\textbf{Reg}$.

Now that we know that $\sigma_j^*$ is regular, we want to conclude that the
measure $d\lambda(\tau):=|\tau|\,d\sigma_{j}^{*}(\tau)$ is also regular. Let
$(l_{n})$ be the sequence of monic orthogonal polynomials
associated with $\lambda$, and let $(L_{n})$ be the corresponding
sequence for $\sigma_{j}^{*}$. 
Without loss of generality, we assume that $0\leq a_j\leq b_j$. Then, by the
extremality property of the monic orthogonal polynomials, we have
\[
b_j^{-1} \|L_{n+1}\|^2_{L^{2}(\sigma^*_{j})}\leq
b_j^{-1}\int_{a_j}^{b_j}|l_{n}(\tau)|^{2}\tau^2d\sigma^*_{j}(\tau)\leq
\|l_n\|^2_{L^{2}(\lambda)}\leq\int_{a_j}^{b_j}|L_{n}|^{2}d\lambda\leq  b_j
\|L_n\|^2_{L^{2}(\sigma^*_{j})}.
\]
Since 
$\supp(\sigma_{j}^{*})=\supp(\lambda)$, taking $n$th roots and letting $n\to\infty$
yields the desired result. 
\end{proof}

\subsection{Proof of Theorem~\ref{theo:zeroasympPnk}}

Let $\Lambda\subset\mathbb{N}$ be a sequence of integers such that for every
$k=0,\ldots,p-1$,
\begin{equation}\label{assumptionconv}
c_{k}\,\mu_{P_{n,k}}\xrightarrow[n\in\Lambda]{*}\mu_{k},\qquad
c_{k}:=1-\frac{k}{p},
\end{equation}
for some positive measures $\mu_{k}$ on $[a_{k},b_{k}]$. Our goal is to show
that the vector measure $\vec{\mu}=(\mu_{0},\ldots,\mu_{p-1})$ is the unique
equilibrium measure satisfying \eqref{equilprop}.
This implies that for each $k=0,\ldots,p-1,$ the sequence of measures
$(\mu_{P_{n,k}})_{n}$ has a unique limit point in the weak-star topology. By
the compactness of the unit ball in the space of Borel positive measures with
the weak-star topology, we obtain that the limits hold.

Note that $\mu_{k}$ has mass $c_{k}$. From \eqref{assumptionconv} we obtain
\begin{equation}\label{eq:logasympPnk}
\lim_{n\in\Lambda}\frac{c_{k}}{Z(n,k)}\,\log
|P_{n,k}(z)|=-U^{\mu_{k}}(z),\qquad k=0,\ldots,p-1,
\end{equation}
uniformly on compact subsets of $\mathbb{C}\setminus[a_{k},b_{k}]$. Note also that $\supp(\mu_{k})\subset E_{k}:=\supp(\sigma_{k}^{*})$ for every $k=0,\ldots,p-1$, which follows immediately from Corollary~\ref{cor:zeros}.

For each $k=0,\ldots,p-1$ we have the orthogonality conditions
\begin{equation}\label{varyingorthogPnk}
\int_{a_{k}}^{b_{k}}
P_{n,k}(\tau)\,\tau^{s}\,\frac{|h_{n,k}(\tau)|\,d|\sigma_{n,k}|(\tau)}{|P_{n,k-1
}(\tau)\,P_{n,k+1}(\tau)|}=0,\qquad s=0,\ldots,Z(n,k)-1,
\end{equation}
where $h_{n,k}$ is defined in \eqref{eq:def:hnk}, and recall that $P_{n,-1}, P_{n,p}\equiv 1$. We consider the
expression
\begin{equation}\label{varyingweightorthog}
\frac{1}{2
Z(n,k)}\log\left(\frac{|P_{n,k-1}(\tau)||P_{n,k+1}(\tau)|}{|h_{n,k}(\tau)|}
\right)
=\frac{\log |P_{n,k-1}(\tau)|+\log |P_{n,k+1}(\tau)|-\log
|h_{n,k}(\tau)|}{2Z(n,k)}
\end{equation}
associated with the orthogonality measure in \eqref{varyingorthogPnk}. Applying \eqref{eq:logasympPnk} and
\eqref{asympformZnk} we obtain
\begin{align}
\lim_{n\in\Lambda}\frac{\log |P_{n,k-1}(\tau)|}{2 Z(n,k)} &
=\lim_{n\in\Lambda}\frac{Z(n,k-1)}{Z(n,k)}\frac{\log |P_{n,k-1}(\tau)|}{2
Z(n,k-1)}=-\frac{p}{2(p-k)} U^{\mu_{k-1}}(\tau),\label{normlogasymp:1}\\
\lim_{n\in\Lambda}\frac{\log |P_{n,k+1}(\tau)|}{2 Z(n,k)} &
=\lim_{n\in\Lambda}\frac{Z(n,k+1)}{Z(n,k)}\frac{\log |P_{n,k+1}(\tau)|}{2
Z(n,k+1)}=-\frac{p}{2(p-k)} U^{\mu_{k+1}}(\tau),\label{normlogasymp:2}
\end{align}
uniformly on $E_{k}=\supp(\sigma_{k}^{*})$. To analyze the expression $\log |h_{n,k}(\tau)|/Z(n,k)$, observe that because $p_{n,k-1}^{2}|\nu_{n,k-1}|$ is a probability measure (cf. \eqref{orthonorm:lpnk}), we can find constants $d_{k}>0$,
$D_{k}>0$, independent of $n$, such that
\begin{equation}\label{est:hnk:2}
d_{k}\leq \int \frac{p_{n,k-1}^{2}(\tau)}{|z-\tau|}\,d|\nu_{n,k-1}|(\tau)\leq
D_{k}, \qquad z\in[a_{k},b_{k}].
\end{equation}
These bounds together with \eqref{intrep:hnk} yield
\begin{equation}\label{asympnormhnk}
\lim_{n\in\Lambda}\frac{\log |h_{n,k}(\tau)|}{2 Z(n,k)}=0,
\end{equation}
uniformly on $E_{k}=\supp(\sigma_{k}^{*})$. Here an observation needs to be
made concerning the posibility that $0\in E_{k}$. If this happens and
$\ell(n)<k$, then $|h_{n,k}(z)|$ has a factor $|z|$ (see \eqref{intrep:hnk}) which may destroy \eqref{asympnormhnk} at $\tau=0$. To avoid this, one could, if
necessary, take the limit as $n\rightarrow \infty$ along a subsequence
$\Lambda'\subset\Lambda$ such that $\ell(n)<k$ for all $n\in\Lambda'$ and
incorporate the factor $|z|$ to the measure $|\sigma_{n,k}|$ in
\eqref{varyingorthogPnk}. So we may assume that \eqref{asympnormhnk} holds in
any case.
As a result of \eqref{normlogasymp:1}, \eqref{normlogasymp:2} and
\eqref{asympnormhnk} we have the convergence of \eqref{varyingweightorthog} to
\[
\lim_{n\in\Lambda}\frac{1}{2
Z(n,k)}\log\left(\frac{|P_{n,k-1}(\tau)||P_{n,k+1}(\tau)|}{|h_{n,k}(\tau)|}
\right)=-\frac{p}{2(p-k)}(U^{\mu_{k-1}}(\tau)+U^{\mu_{k+1}}(\tau)).
\]
uniformly on $E_{k}$.

Now we can apply Lemma~\ref{lemma:varyingmeas} to the sequence $(P_{n,k})$, identifying $q_{l}$ with this sequence, $\phi_{l}$ with the weight $|h_{n,k}|/|P_{n,k-1}\,P_{n,k+1}|$, and $\varphi$ with
\begin{equation}\label{def:extfieldvarphi}
\varphi(z)=-\frac{p}{2(p-k)}(U^{\mu_{k-1}}(z)+U^{\mu_{k+1}}(z)).
\end{equation}
By Lemma~\ref{lemma:regularity}, the measures $|\sigma_{n,k}|$ are in the class $\textbf{Reg}$, and the regularity of $\supp(\sigma_{k})$ with respect to the Dirichlet problem also implies the regularity of $\supp(\sigma_{n,k})$ in the same sense. Hence by Lemma~\ref{lemma:varyingmeas} we get that for each $0\leq k\leq p-1$,
\begin{equation}\label{weakconvPnk}
\mu_{P_{n,k}}\xrightarrow[n\rightarrow\Lambda]{*}\overline{\mu}_{k}
\end{equation}
and
\begin{equation}\label{convergL2norm}
\lim_{n\in\Lambda}\left(\int_{a_{k}}^{b_{k}}
P_{n,k}^{2}(\tau)\,\frac{|h_{n,k}(\tau)|
d|\sigma_{n,k}|(\tau)}{|P_{n,k-1}(\tau)||P_{n,k+1}(\tau)|}\right)^{\frac{1}{2
Z(n,k)}}=e^{-\overline{w}_{k}},
\end{equation}
where $\overline{\mu}_{k}$ and $\overline{w}_{k}$ are the equilibrium measure and
equilibrium constant, respectively, in the presence of the external field \eqref{def:extfieldvarphi} on $E_{k}$. Hence, from \eqref{assumptionconv} and \eqref{weakconvPnk} it follows that for each $0\leq k\leq p-1$ we have $\mu_{k}=c_{k}\,\overline{\mu}_{k}$ and consequently
\begin{equation}\label{variational2}
U^{\mu_{k}}(x)-\frac{1}{2} U^{\mu_{k-1}}(x)-\frac{1}{2} U^{\mu_{k+1}}(x)
\begin{cases}
=c_{k}\,\overline{w}_{k}, & \mbox{for}\,\,x\in \supp(\mu_{k}), \\[0.5em]
\geq c_{k}\,\overline{w}_{k}, & \mbox{for}\,\,x\in E_{k}\setminus\supp(\mu_{k}),
\end{cases}
\end{equation}
cf. \eqref{eqcond:extfield}.

Finally, let $w_{k}:=c_{k}\overline{w}_{k}$, $k=0,\ldots,p-1$. Then \eqref{variational2} shows that the
vector measure $\vec{\mu}=(\mu_{0},\ldots,\mu_{p-1})\in\mathcal{M}$ satisfies
the variational conditions \eqref{eq:eqcond:1}-\eqref{eq:eqcond:2}
for every $k=0,\ldots,p-1$ (cf. \eqref{def:combpot}). Therefore, by Lemma
\ref{lemma:varcond}, $\vec{\mu}$ is the unique equilibrium
measure satisfying \eqref{equilprop}. This concludes the proof of \eqref{zeroasympPnk}.

From \eqref{variational2} and \eqref{ineqcombpot}
we deduce that $E_{k}\setminus\supp(\mu_{k})=\emptyset$ for all $k$,
hence $\supp(\mu_{k})=\supp(\sigma_{k}^{*})$ for all $k$.

We now prove \eqref{asymptoticspsink}. From \eqref{eq:def:Hnk} and \eqref{eq:def:hnk}, we see that
\begin{equation}\label{psinkandhnk}
|\psi_{n,k}(z)|=\frac{K^{-2}_{n,k-1}|h_{n,k}(z)||P_{n,k}(z)|\,}{|P_{n,k-1}(z)|},
\quad 1\leq k\leq p.
\end{equation}
Thus, we seek to establish the asymptotic behavior of each of the factors in
the right-hand side of
\eqref{psinkandhnk}.

Let us set
\[
f_{n,k}(z):=\begin{cases}
h_{n,k}(z), &\ell(n)< k,\\
zh_{n,k}(z),&\ell(n)\geq k.
\end{cases}
\]
From \eqref{orthonorm:lpnk}-\eqref{intrep:hnk}, we see that all functions in
the family $\{f_{n,k}\}_{n}$   are analytic in
$U:=\overline{\mathbb{C}}\setminus \left([a_{k-1},b_{k-1}]\cup\{0\}\right)$, and
for every closed subset $E$ of $U$, we have
$\sup_{n\geq 0}\max_{z\in E}|f_{n,k}(z)| <\infty$.
By Montel's theorem, $\{f_{n,k}\}_{n}$ is a normal family in $U$. Since no
$f_{n,k}$ vanishes in $U$, and particularly, $|f_{n,k}(\infty)|= 1$,  Hurwitz's
theorem tells us that every normal limit point of $\{f_{n,k}\}_{n}$ is
zero-free in $U$, which, in view of the normality of the family, implies that
for every closed subset $E\subset U$,
$\inf_{n\geq 0}\min_{z\in E}|f_{n,k}(z)| >0$. Therefore, as
$\lim_{n\to\infty}Z(n,0)=\infty$, we have
\begin{align}\label{asymptoticshnk}
\lim_{n\to\infty}|h_{n,k}(z)|^{1/Z(n,0)}=1
 \end{align}
locally uniformly on $\mathbb{C}\setminus
\left([a_{k-1},b_{k-1}]\cup\{0\}\right)$.

Applying \eqref{convergL2norm} and the equalities \eqref{eq:def:Knk} and \eqref{eq:def:hnk},  we  obtain
\begin{align*}
\lim_{n\to\infty}\left[\frac{K_{n,k}}{K_{n,k-1}}\right]^{1/Z(n,k)}=e^{w_k/c_{k}},
\quad k=0,\ldots,p-1.
 \end{align*}

Since $K_{n,-1}=1$ and $c_{0}=1$, this yields
 \[
 \lim_{n\to\infty}K_{n,0}^{1/Z(n,0)}=e^{w_0}.
\]
More generally,
\begin{align}\label{asymptoticsKnk}
 \lim_{n\to\infty}K_{n,k}^{1/Z(n,0)}=e^{\sum_{j=0}^k w_j},\qquad  k=0,\ldots,p-1,
 \end{align}
which easily follows by mathematical induction. Indeed,
\[
\lim_{n\to\infty}\frac{Z(n,k)}{Z(n,0)}=\lim_{n\to\infty}\prod_{j=1}^k\frac{Z(n,
j)}{Z(n,j-1)}=\prod_{j=1}^k\frac{p-j}{p-(j-1)}=\frac{p-k}{p}=c_k,\quad 0\leq
k\leq p-1,
\]
so that
\[
K_{n,k}^{1/Z(n,0)}=\left(\left[\frac{K_{n,k}}{K_{n,k-1}}\right]^{1/Z(n,k)}
\right)^{
\frac{Z(n,k)}{Z(n,0)}}K_{n,k-1}^{1/Z(n,0)}\xrightarrow[n\rightarrow\infty]{}
e^{\sum_{j=0}^{k} w_j}.
\]

In virtue of \eqref{zeroasympPnk} we have
\begin{align}\label{asymptoticsPnk}
\lim_{n\to\infty}|P_{n,k}(z)|^{1/Z(n,0)}=e^{-U^{\mu_{k}}(z)},\quad 0\leq k\leq
p-1,
\end{align}
locally uniformly on $\mathbb{C}\setminus [a_k,b_k]$. This last equality
already proves \eqref{asymptoticspsink} for the case $k=0$. For $1\leq k\leq
p$, the corresponding result follows from
\eqref{psinkandhnk}-\eqref{asymptoticsPnk}.

Finally, we prove \eqref{lim:coeffgeometricmean}. 
Since the coefficients $a_n$ are positive, it follows from
\eqref{twointegralswitha_n}, \eqref{repreformomentpsink}, and
\eqref{eq:def:Knk} that for all $n\geq p$, $n\equiv k \mod p$,
\begin{align*}
a_n={} & \frac{K_{n-p,k}^2}{K_{n,k}^2},
\end{align*}
so that
\begin{align*}
\prod_{j=1}^{m}a_{pj+k}=&\frac{K_{k,k}^2}{K_{mp+k,k}^2} ,\quad 0\leq k\leq p-1.
\end{align*}
Then, using \eqref{asymptoticsKnk} and \eqref{asympformZnk}, we obtain
\begin{align*}
\lim_{m\to\infty}\left(\prod_{j=1}^m
a_{pj+k}\right)^{1/m}=\lim_{m\to\infty}\left[\left(K_{k,k}^2/K_{mp+k,k}
^2\right)^{1/Z(mp+k,0)}\right]^\frac{Z(mp+k,0)}{m}=
e^{-\frac{2p}{p+1}\sum_{j=0}^k w_j}.
\end{align*}

\subsection{Proof of Corollary \ref{Coro:zeroasympQn}}

For each $0\leq j\leq p$, let $f_j:\Gamma_{0,j}\to [a_0,b_0]$ be the function
given by $f_j(t)=t^{p+1}$, which is clearly a homeomorphism. Since
$Q_n(z)=z^\ell P_{n,0}(z^{p+1})$, $Q_n$ has a zero at the origin of order
$\ell$,  and its remaining zeros are
the elements of the set  $\{f^{-1}_j(\tau): 0\leq j\leq p,\ P_{n,0}(\tau)=0\}$.
Thus, for every continuous function $F$ on $\Gamma_0\cup\{0\}$, we have
\begin{align*}
  \int Fd\mu_{Q_n}
  ={}&\frac{\ell
F(0)}{n}+\frac{n-\ell}{n(p+1)}\sum_{j=0}^p\frac{1}{\frac{n-\ell}{p+1}}\sum_{P_{n
,0}(\tau)=0} F(f^{-1}_j(\tau)),
\end{align*}
whence \eqref{zeroasympQn} easily follows. 

Since $\tilde{\mu}_k$ is rotationally symmetric and $\mu_k$ is the push forward
of $\tilde{\mu}_k$ by the map $z\to z^{p+1}$, we have
\begin{align}\label{relmus}
U^{\mu_k}(z^{p+1})=\sum_{j=0}^p\int_{\Gamma_k}\log\frac{1}{|z-t\omega^j|}d\tilde{\mu}
_k(t)=(p+1)U^{\tilde{\mu}_k}(z)
\end{align}
Then, \eqref{asymptoticsPsink} follows by combining \eqref{modifiedPsink}, \eqref{asympformZnk}, \eqref{asymptoticspsink}, and
\eqref{relmus}.

\section{Hermite-Pad\'{e} approximation}\label{sectionhermitepade}
\subsection{Definitions and results}\label{subsectionhermitedef}

In this section we study the Hermite-Pad\'{e} approximation to the system of functions
\begin{equation}\label{Cauchytransfsj}
\widehat{s}_{j}(z)=\int_{\Gamma_{0}}\frac{d s_{j}(t)}{z-t},\qquad 0\leq j\leq p-1,
\end{equation}
where $(s_{0},\ldots,s_{p-1})=\mathcal{N}(\sigma_{0},\ldots,\sigma_{p-1})$ is the
Nikishin system of measures defined in \eqref{def:sj}. For this, we follow closely the method
employed by Gonchar-Rakhmanov-Sorokin \cite{GonRakhSor} in their study of Hermite-Pad\'{e} approximants for
generalized Nikishin systems on the real line.

The problem of Hermite-Pad\'{e} approximation for the system of functions \eqref{Cauchytransfsj} is
the following. Given a multi-index $\vec{n}=(n_0, n_1,\ldots, n_{p-1})\in\mathbb{Z}_{+}^{p}$, we seek a non-zero polynomial $Q_{\vec{n}}$ with $\deg(Q_{\vec{n}})\leq |\vec{n}|:=n_{0}+\ldots+n_{p-1}$ such that for every $j=0,\ldots,p-1$,
\begin{equation}\label{def:HP}
Q_{\vec{n}}(z)\ \widehat{s}_{j}(z)-Q_{\vec{n},j}(z)=O\left(\frac{1}{z^{n_j+1}}\right),\qquad z\rightarrow\infty,
\end{equation}
where $Q_{\vec{n},j}$ is the polynomial part in the Laurent series expansion of $Q_{\vec{n}}\, \widehat{s}_{j}$ at $z=\infty$. It is easy to see that such a polynomial $Q_{\vec{n}}$ exists, since the $p$ conditions \eqref{def:HP} can be expressed equivalently as a homogeneous system of $|\vec{n}|$ linear equations with $|\vec{n}|+1$ unknowns (the coefficients of $Q_{\vec{n}}$), which always has a non-trivial solution. The vector of rational functions
\[
\left(\frac{Q_{\vec{n},0}}{Q_{\vec{n}}}, \frac{Q_{\vec{n},1}}{Q_{\vec{n}}}\ldots,\frac{Q_{\vec{n},p-1}}{Q_{\vec{n}}}\right)
\]
is called a Hermite-Pad\'e approximant associated with $\vec{n}$ for the system of functions \eqref{Cauchytransfsj}. If we integrate the expression $z^{l}(Q_{\vec{n}}(z) \widehat{s}_{j}(z)-Q_{\vec{n},j}(z))$, $l=0,\ldots,n_{j}-1,$ along a closed contour that surrounds $\Gamma_{0}$, it easily follows from \eqref{def:HP} and \eqref{Cauchytransfsj} that the polynomial $Q_{\vec{n}}$ satisfies the multi-orthogonality conditions
\[
\int_{\Gamma_{0}} Q_{\vec{n}}(z)\,z^{l}\,ds_{j}(z)=0,\qquad l=0,\ldots,n_{j}-1,\qquad 0\leq j\leq p-1.
\]

In this paper we will only consider Hermite-Pad\'e approximants associated with multi-indices $\vec{n}=(n_{0},\ldots,n_{p-1})\in\mathbb{Z}_{+}^{p}$ that are defined by
\begin{equation}\label{compindex}
n_{j}=\lf \frac{n-j-1}{p}\rf+1,\qquad j=0,\ldots,p-1,
\end{equation}
for a given integer $n\geq 0$. These multi-indices can be equivalently described as those satisfying the conditions $n_{0}\geq n_{1}\geq \cdots\geq n_{p-1}$ and $n_{p-1}\geq n_{0}-1$, and are uniquely determined by their norm $|\vec{n}|$, which equals $n$ if $\vec{n}$ is defined by \eqref{compindex}. Let $I$ denote the sequence of such multi-indices.

Given $\vec{n}\in I$ with $|\vec{n}|=n$, we see that $Q_{\vec{n}}$ satisfies \eqref{orthog:Qn}. Thus, if we assume, as we will do, that $Q_{\vec{n}}$ is monic, then by the normality of the Nikishin system we have that $Q_{\vec{n}}$ is unique and is given by $Q_{\vec{n}}=Q_{n}$. Moreover, since
\[
\int_{\Gamma_{0}}\frac{Q_{n}(t)}{z-t}\ d s_{j}(t)=O\left(\frac{1}{z^{n_{j}+1}}\right),\qquad z\rightarrow\infty,
\]
from \eqref{def:HP} and the identity
\begin{equation}\label{identityHP}
Q_{n}(z)\ \widehat{s}_{j}(z)-\int_{\Gamma_{0}}\frac{Q_{n}(z)-Q_{n}(t)}{z-t}\,d s_{j}(t)=\int\frac{Q_{n}(t)}{z-t}\ d s_{j}(t)
\end{equation}
it follows that the polynomials
\begin{equation}\label{def:Qnj}
Q_{n,j}(z):=\int_{\Gamma_{0}}\frac{Q_{n}(z)-Q_{n}(t)}{z-t}\,d s_{j}(t),\qquad j=0,\ldots,p-1,
\end{equation}
are the numerators of the Hermite-Pad\'{e} approximant associated with $\vec{n}$.

\begin{definition}
Given $n\geq 0$, for each $j=0,\ldots,p-1$ we define
\begin{align}
\Phi_{n,j+1}(z) & =\int\frac{Q_{n}(t)}{z-t}\ d s_{j}(t),\label{def:Phinj}\\
\delta_{n,j}(z) & =\frac{\Phi_{n,j+1}(z)}{Q_{n}(z)}.\label{def:deltanj}
\end{align}
\end{definition}

From \eqref{identityHP}-\eqref{def:deltanj} we deduce that
\begin{equation}\label{formremainder}
\delta_{n,j}=\widehat{s}_{j}-\frac{Q_{n,j}}{Q_{n}},\qquad j=0,\ldots,p-1,
\end{equation}
i.e., $\delta_{n,j}$ is the remainder in the approximation of $\widehat{s}_{j}$ by the $j$th component of the $n$th Hermite-Pad\'e approximant.

\begin{theorem}\label{theo:HP}
Under the same assumptions of Theorem~\ref{theo:zeroasympPnk}, we have that for every $j=0,\ldots,p-1$,
\begin{equation}\label{asympdeltanj}
\lim_{n\rightarrow\infty}|\delta_{n,j}(z)|^{1/n}=e^{-U^{\widetilde{\mu}_{1}}(z)+2 U^{\widetilde{\mu}_{0}}(z)-\frac{2}{p+1}w_{0}}
\end{equation}
uniformly on compact subsets of $\mathbb{C}\setminus(\bigcup_{i=0}^{j+1}\Gamma_{i}\cup\{0\})$, where the measures $\widetilde{\mu}_{0}$, $\widetilde{\mu}_{1}$ are defined in \eqref{def:muktilde}, and $w_{0}=\overline{w}_{0}$ is the equilibrium constant in \eqref{variational2}. In particular, for every $j=0,\ldots,p-1,$
\begin{equation}\label{eq:convHPapprox}
\lim_{n\rightarrow\infty}\frac{Q_{n,j}(z)}{Q_{n}(z)}=\widehat{s}_{j}(z),
\end{equation}
uniformly on compact subsets of $\mathbb{C}\setminus (\bigcup_{i=0}^{j+1}\Gamma_{i}\cup\{0\})$.
\end{theorem}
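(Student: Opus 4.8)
The plan is to reduce everything to the $n$-th root asymptotics of the ratio $\Phi_{n,j+1}/Q_n=\delta_{n,j}$ (cf. \eqref{def:deltanj} and \eqref{formremainder}) and to identify its dominant contribution with the $k=1$ function of the second kind, whose behavior is already recorded in Corollary \ref{Coro:asymptotiPsink}. The first step is to obtain a telescoping representation of $\Phi_{n,j+1}$ in terms of the $\Psi_{n,k}$ and the Cauchy transforms $\widehat{s}_{k,j}$ of the hierarchy \eqref{hiers}. Setting, for $1\le k\le j+1$,
\[
\Phi^{(k)}_{n,j+1}(z):=\int_{\Gamma_{k-1}}\frac{\Psi_{n,k-1}(t)}{z-t}\,ds_{k-1,j}(t),\qquad \Phi^{(1)}_{n,j+1}=\Phi_{n,j+1},
\]
and using $ds_{k-1,j}=\widehat{s}_{k,j}\,d\sigma_{k-1}$ for $k\le j$, the partial fraction identity $\tfrac{1}{(z-t)(t-x)}=\tfrac{1}{z-x}\bigl(\tfrac{1}{z-t}+\tfrac{1}{t-x}\bigr)$, together with Fubini's theorem and Definition \ref{definitionPsi}, yields the recursion $\Phi^{(k)}_{n,j+1}=\Psi_{n,k}\,\widehat{s}_{k,j}-\Phi^{(k+1)}_{n,j+1}$. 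Since $\Phi^{(j+1)}_{n,j+1}=\Psi_{n,j+1}$, unwinding gives
\[
\Phi_{n,j+1}(z)=\sum_{k=1}^{j}(-1)^{k-1}\Psi_{n,k}(z)\,\widehat{s}_{k,j}(z)+(-1)^{j}\Psi_{n,j+1}(z).
\]

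The factors $\widehat{s}_{k,j}$ are independent of $n$ and analytic on the relevant domain, so $|\widehat{s}_{k,j}|^{1/n}\to 1$ locally uniformly; hence by Corollary \ref{Coro:asymptotiPsink} each summand $\Psi_{n,k}\widehat{s}_{k,j}$ has $n$-th root limit $e^{g_k(z)}$, where $g_k(z):=-U^{\widetilde{\mu}_k}(z)+U^{\widetilde{\mu}_{k-1}}(z)-\tfrac{2}{p+1}\sum_{i=0}^{k-1}w_i$. Dividing by $Q_n$, whose $n$-th root limit is $e^{-U^{\widetilde{\mu}_0}}$ by Corollary \ref{Coro:zeroasympQn}, one checks that the claimed right-hand side of \eqref{asympdeltanj} is precisely $e^{g_1-g_0}$. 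Thus the theorem reduces to showing that the $k=1$ term strictly dominates every other term on compact subsets of the domain.

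The crux, and the main obstacle, is this domination. Passing to $\zeta=z^{p+1}$ via \eqref{relmus}, a direct computation gives $g_1-g_k=\tfrac{2}{p+1}\sum_{i=1}^{k-1}\bigl(w_i-W_i^{\vec{\mu}}(\zeta)\bigr)$ and $g_1-g_0=\tfrac{2}{p+1}\bigl(W_0^{\vec{\mu}}(\zeta)-w_0\bigr)$, where $W_i^{\vec{\mu}}$ is the combined potential \eqref{def:combpot}. Everything then follows from the potential-theoretic inequality $W_i^{\vec{\mu}}(\zeta)<w_i$ for all $\zeta\in\mathbb{C}\setminus E_i$. To prove it I would use that $\supp(\mu_i)=E_i$ (Theorem \ref{theo:zeroasympPnk}): then $U^{\mu_i}$ is harmonic off $E_i$ while $-\tfrac12 U^{\mu_{i\pm1}}$ are subharmonic everywhere, so $W_i^{\vec{\mu}}$ is subharmonic on $\overline{\mathbb{C}}\setminus E_i$, continuous up to $E_i$ with boundary value $w_i$ (from \eqref{eq:eqcond:1}--\eqref{eq:eqcond:2} and Dirichlet-regularity), and equal to $0$ at $\infty$ when $i\ge 1$ — the logarithmic terms cancelling because $\|\mu_m\|=(p-m)/p$ — or tending to $-\infty$ when $i=0$. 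The maximum principle on the sphere forces $W_i^{\vec{\mu}}\le w_i$, and the strong maximum principle makes the inequality strict off $E_i$ since $W_i^{\vec{\mu}}$ is non-constant.

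Applying this inequality for $i=1,\dots,k-1$ gives $g_1>g_k$ for $2\le k\le j+1$ locally uniformly on the domain (note $E_1,\dots,E_j$ are all excluded), so the single $k=1$ term dominates the telescoping sum and, after division by $Q_n$, yields \eqref{asympdeltanj}. Applying it for $i=0$ gives $g_1-g_0<0$, whence $\delta_{n,j}\to 0$ geometrically and \eqref{eq:convHPapprox} follows from $Q_{n,j}/Q_n=\widehat{s}_j-\delta_{n,j}$. The only remaining care is that $\Psi_{n,1}$ has no zeros in the domain (its nonzero zeros lie in $(a_1,b_1)$ by Proposition \ref{prop:exactnumberzeros}) and that $\widehat{s}_{1,j}$ vanishes only at isolated points; the survival of the $n$-th root limit near those points is routine and handled by the same normal-families and Hurwitz argument used in the proof of Corollary \ref{Coro:asymptotiPsink}.
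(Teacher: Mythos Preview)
Your proof is correct and follows essentially the same route as the paper's: both use the expansion $\Phi_{n,j+1}=\sum_{k=1}^{j+1}(-1)^{k-1}\widehat{s}_{k,j}\,\Psi_{n,k}$ (the paper cites \cite{GonRakhSor} for it rather than deriving it), factor out the $k=1$ term, and eliminate the others via the same subharmonicity/maximum-principle inequality $W_i^{\vec\mu}<w_i$ on $\overline{\mathbb{C}}\setminus\supp(\mu_i)$, which the paper packages as Lemma~\ref{lemma:potineq}. One minor simplification: the paper observes via \eqref{reductiontosegments} that $\widehat{s}_{1,j}$ is actually zero-free on $\mathbb{C}\setminus(\Gamma_1\cup\{0\})$, so the normal-families patch you mention at the end is not needed.
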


Before we give the proof of Theorem~\ref{theo:HP}, we make some remarks and prove an auxiliary result.

Note that $\Phi_{n,1}=\Psi_{n,1}$. More generally, one can show (see e.g. \cite[pg. 691]{GonRakhSor}) that for every $k=1,\ldots,p,$
\begin{equation}\label{relPhiPsi}
\Phi_{n,k}(z)=\sum_{i=1}^{k} (-1)^{i-1} \widehat{s}_{i,k-1}(z) \Psi_{n,i}(z),\qquad z\in\mathbb{C}\setminus\bigcup_{l=0}^{k-1}\Gamma_{l},
\end{equation}
where $\widehat{s}_{i,k-1}(z)$ denotes the Cauchy transform of the measure $s_{i,k-1}=\langle \sigma_{i},\ldots,\sigma_{k-1}\rangle$ (cf. \eqref{def:skj}), and we understand $\widehat{s}_{k,k-1}(z)\equiv 1$. Observe that \eqref{reductiontosegments} implies that the function $\widehat{s}_{i,k-1}(z)$ does not vanish on $\mathbb{C}\setminus(\Gamma_{i}\cup\{0\})$.

\begin{lemma}\label{lemma:potineq}
Let $(\mu_{0},\ldots,\mu_{p-1})$ be the vector equilibrium measure satisfying \eqref{zeroasympPnk}, and $w_{0},\ldots,w_{p-1}$ the associated equilibrium constants. Then, for every $k=0,\ldots,p-1$ we have
\begin{equation}\label{ineqcombpot}
2U^{\mu_{k}}(z)-U^{\mu_{k-1}}(z)-U^{\mu_{k+1}}(z)-2 w_{k}<0,\qquad\mbox{for all}\,\,\,z\in\overline{\mathbb{C}}\setminus\supp(\mu_{k}),
\end{equation}
where $U^{\mu_{-1}}\equiv 0$, $U^{\mu_{p}}\equiv 0$.
\end{lemma}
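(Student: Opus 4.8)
The plan is to read the left-hand side of \eqref{ineqcombpot} as twice a shifted combined potential and then run a maximum-principle argument on the complement of $\supp(\mu_k)$. Writing $W_k^{\vec{\mu}}$ for the combined potential in \eqref{def:combpot}, the expression to be estimated is exactly $2\bigl(W_k^{\vec{\mu}}(z)-w_k\bigr)$, so it suffices to prove that $g_k:=W_k^{\vec{\mu}}-w_k$ is strictly negative on $\Omega_k:=\overline{\mathbb{C}}\setminus\supp(\mu_k)$. Since $\supp(\mu_k)\subset[a_k,b_k]$ is a compact subset of the real line, $\Omega_k$ is a connected domain on the Riemann sphere containing $\infty$.

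First I would record the (sub)harmonicity of $g_k$. On $\Omega_k$ the measure $\mu_k$ carries no mass, so $U^{\mu_k}$ is harmonic there, while $-\tfrac12U^{\mu_{k-1}}$ and $-\tfrac12U^{\mu_{k+1}}$ are subharmonic (the potentials $U^{\mu_{k\pm1}}$ being superharmonic). Hence $g_k$ is subharmonic on $\Omega_k$, with $\Delta(2g_k)=2\pi(\mu_{k-1}+\mu_{k+1})\ge0$ there. Next I would examine the point at infinity using the mass normalization $\|\mu_j\|=1-j/p$ (and $U^{\mu_{-1}}\equiv U^{\mu_p}\equiv0$): the coefficient of $\log|z|$ in $2g_k(z)$ as $z\to\infty$ equals $-2\|\mu_k\|+\|\mu_{k-1}\|+\|\mu_{k+1}\|$, which vanishes for $1\le k\le p-1$ and equals $-(1+1/p)$ for $k=0$. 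Thus $g_k$ extends harmonically and boundedly across $\infty$ when $1\le k\le p-1$, whereas $g_0(z)\to-\infty$; in every case $g_k$ is bounded above near $\infty$.

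The boundary input comes from Lemma~\ref{lemma:varcond}: the variational condition \eqref{eq:eqcond:1} gives $g_k\le0$ on $\supp(\mu_k)=\partial\Omega_k$. Under the regularity hypotheses in force (by Theorem~\ref{theo:zeroasympPnk} the sets $E_k=\supp(\sigma_k^*)=\supp(\mu_k)$ are regular for the Dirichlet problem), the potentials $U^{\mu_k}$ are continuous and the equilibrium relations hold with equality everywhere on $E_k$, so that $\limsup_{z\to\xi}g_k(z)\le0$ for every $\xi\in\partial\Omega_k$. Together with the boundedness above near $\infty$, the maximum principle for subharmonic functions then yields $g_k\le0$ throughout $\Omega_k$.

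Finally I would upgrade this to the strict inequality via the strong maximum principle. If $g_k$ attained the value $0$ at an interior point of the connected domain $\Omega_k$, it would be constant, hence harmonic on $\Omega_k$; but $\Delta(2g_k)=2\pi(\mu_{k-1}+\mu_{k+1})\not\equiv0$ on $\Omega_k$ (for $k\ge1$ one has $\|\mu_{k-1}\|=1-(k-1)/p>0$, so $\mu_{k-1}$ lives in $\Omega_k$ and forbids harmonicity), a contradiction; for $k=0$ the conclusion is immediate because $g_0(\infty)=-\infty\neq0$. Hence $g_k<0$ on all of $\Omega_k$, which is precisely \eqref{ineqcombpot}, including at $z=\infty$. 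The one genuinely delicate point is the boundary estimate $\limsup_{z\to\xi}g_k(z)\le0$: since logarithmic potentials are only lower semicontinuous and the equilibrium inequalities a priori hold only quasi-everywhere, one must either exploit the Dirichlet regularity of the $E_k$ (which furnishes honest continuity and everywhere-equality) or invoke the generalized maximum principle, under which the polar exceptional set on $\partial\Omega_k$ is negligible, exactly as in the treatments of Nikishin--Sorokin and Gonchar--Rakhmanov--Sorokin.
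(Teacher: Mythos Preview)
Your argument is correct and follows essentially the same route as the paper's proof: both establish that $2W_k^{\vec{\mu}}-2w_k$ is subharmonic on $\overline{\mathbb{C}}\setminus\supp(\mu_k)$, takes the value $0$ on the boundary $\supp(\mu_k)$ by the equilibrium conditions (with continuity of the potentials coming from Dirichlet regularity of the $E_k$), and then conclude by the maximum principle. Your treatment is a bit more explicit than the paper's in separating the behavior at $\infty$ for $k=0$ versus $k\ge1$ and in upgrading $\le0$ to $<0$ via the strong maximum principle (using that the Riesz measure $\mu_{k-1}+\mu_{k+1}$ is nontrivial on $\Omega_k$), but these are exactly the ingredients implicit in the paper's terser appeal to the maximum principle.
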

\begin{proof}
According to \eqref{variational2}, we have
\begin{equation}\label{variational3}
2 U^{\mu_{k}}(x)-U^{\mu_{k-1}}(x)-U^{\mu_{k+1}}(x)-2 w_{k}=0,\qquad x\in\supp(\mu_{k}),\qquad k=0,\ldots,p-1.
\end{equation}
This implies that $U^{\mu_{k}}$ is continuous on $\supp(\mu_{k})$, and hence $U^{\mu_{k}}$ is continuous on $\mathbb{C}$ for all $k$. The measure $2\mu_{k}-\mu_{k-1}-\mu_{k+1}$ has total mass $1+\frac{1}{p}$ if $k=0$ and has total mass $0$ for all other values of $k$. Therefore the function $2U^{\mu_{k}}(z)-U^{\mu_{k-1}}(z)-U^{\mu_{k+1}}(z)-2 w_{k}$ is subharmonic on $\overline{\mathbb{C}}\setminus\supp(\mu_{k})$. By the maximum principle for subharmonic functions applied to this function, \eqref{variational3} implies \eqref{ineqcombpot}.
\end{proof}

\subsection{Proof of Theorem~\ref{theo:HP}}

We already observed that $\Phi_{n,1}=\Psi_{n,1}$. In virtue of \eqref{zeroasympQn}, \eqref{asymptoticsPsink} and \eqref{def:deltanj}, we get
\[
\lim_{n\rightarrow\infty}|\delta_{n,0}(z)|^{1/n}=\lim_{n\rightarrow\infty}\frac{|\Psi_{n,1}(z)|^{1/n}}{|Q_{n}(z)|^{1/n}}=e^{-U^{\widetilde{\mu}_1}(z)+2 U^{\widetilde{\mu}_0}(z)-\frac{2}{p+1}w_0},
\]
uniformly on compact subsets of $\mathbb{C}\setminus(\Gamma_{1}\cup\Gamma_{0}\cup\{0\})$, which is \eqref{asympdeltanj} for $j=0$.

Let $1\leq j\leq p-1$. By \eqref{relPhiPsi} we can write for $z\in\mathbb{C}\setminus\left(\bigcup_{l=0}^{j}\Gamma_{l}\cup\{0\}\right)$,
\begin{equation}\label{relPhiPsi:2}
\Phi_{n,j+1}(z)=\widehat{s}_{1,j}(z)\Psi_{n,1}(z)\left(1-\frac{\widehat{s}_{2,j}(z)\Psi_{n,2}(z)}{\widehat{s}_{1,j}(z)\Psi_{n,1}(z)}+\cdots+(-1)^{j}\frac{\Psi_{n,j+1}(z)}{\widehat{s}_{1,j}(z)\Psi_{n,1}(z)}\right).
\end{equation}
Now, applying \eqref{relmus} and \eqref{ineqcombpot}, we see that for every $i=1,\ldots,j,$
\[
-U^{\widetilde{\mu}_{i}}(z)+U^{\widetilde{\mu}_{i-1}}(z)-\frac{2}{p+1}\sum_{l=0}^{i-1} w_{l}
>-U^{\widetilde{\mu}_{i+1}}(z)+U^{\widetilde{\mu}_{i}}(z)-\frac{2}{p+1}\sum_{l=0}^{i} w_{l},
\]
for all $z\in\mathbb{C}\setminus\supp(\widetilde{\mu}_{i})$. This implies, in virtue of \eqref{asymptoticsPsink}, that
\[
\lim_{n\rightarrow\infty}\frac{\Psi_{n,i+1}(z)}{\Psi_{n,1}(z)}=0, \qquad 1\leq i\leq j,
\]
locally uniformly on $\mathbb{C}\setminus\left(\bigcup_{l=0}^{j+1}\Gamma_{l}\cup\{0\}\right)$. This and \eqref{relPhiPsi:2} give
\[
\lim_{n\rightarrow\infty}|\delta_{n,j}(z)|^{1/n}
=\lim_{n\rightarrow\infty}\frac{|\Phi_{n,j+1}(z)|^{1/n}}{|Q_{n}(z)|^{1/n}}
=\lim_{n\rightarrow\infty}\frac{|\Psi_{n,1}(z)|^{1/n}}{|Q_{n}(z)|^{1/n}}=e^{-U^{\widetilde{\mu}_1}(z)+2 U^{\widetilde{\mu}_0}(z)-\frac{2}{p+1}w_0},
\]
uniformly on compact subsets of $\mathbb{C}\setminus\left(\bigcup_{l=0}^{j+1}\Gamma_{l}\cup\{0\}\right)$. From \eqref{variational3} for $k=0$ and \eqref{relmus} we see that $-U^{\widetilde{\mu}_1}(z)+2 U^{\widetilde{\mu}_0}(z)-\frac{2}{p+1}w_0<0$ on $\mathbb{C}\setminus\left(\bigcup_{l=0}^{j+1}\Gamma_{l}\cup\{0\}\right)$, hence \eqref{eq:convHPapprox} follows from \eqref{def:deltanj} and \eqref{asympdeltanj}.

\section{Acknowledgements}

We thank Ulises Fidalgo-Prieto for many useful discussions. The first author was partially supported by the grant MTM2015-65888-C4-2-P of the Spanish Ministry of Economy and Competitiveness.


\begin{thebibliography}{99}

\bibitem{AptKalSaff}
A. I. Aptekarev, V. A. Kalyagin and E.B. Saff, Higher order three-term recurrences and
asymptotics of multiple orthogonal polynomials, Constr. Approx. 30 (2009), 175--223.

\bibitem{AptKalIseg}
A. I. Aptekarev, V. A. Kalyagin and J. Van Iseghem, The genetic sum's representation for the
moments of a system of Stieltjes functions and its applications, Constr. Approx. 16 (2000), 487--524.

\bibitem{DelLop}
S. Delvaux and A. L\'{o}pez, High order three-term recursions, Riemann-Hilbert minors and
Nikishin systems on star-like sets, Constr. Approx. 37 (2013), 383--453.

\bibitem{FidLopLopSor}
U. Fidalgo Prieto, A. L\'{o}pez Garc\'{i}a, G. L\'{o}pez Lagomasino and V. N.
Sorokin, Mixed type multiple orthogonal polynomials for two Nikishin systems,
Constr. Approx. 32 (2010), 255--306.

\bibitem{prietolagomasino}
U. Fidalgo and G. L\'{o}pez Lagomasino, Nikishin systems are perfect, Constr. Approx.  34 (2011), 297--356.

\bibitem{GonRakh}
A. A. Gonchar and E. A. Rakhmanov, The equilibrium measure and distribution of
zeros of extremal polynomials, Math. USSR Sbornik 53 (1986), 119--130.

\bibitem{GonRakhSor}
A.A. Gonchar, E.A. Rakhmanov and V. N. Sorokin, Hermite-Pad\'{e} approximants
for systems of Markov-type functions, Sbornik: Math. 188 (1997), 671--696.

\bibitem{HeSaff}
M. X. He and E. B. Saff, The zeros of Faber polynomials for an m-cusped hypocycloid, J. Approx. Theory 78 (1994) 410--432.

\bibitem{LopGar}
A. L\'{o}pez Garc\'{i}a, Asymptotics of multiple orthogonal polynomials for a system of two measures
supported on a starlike set, J. Approx. Theory 163 (2011), 1146--1184.

\bibitem{LopLop}
A. L\'{o}pez-Garc\'{i}a and G. L\'{o}pez Lagomasino, Nikishin systems on star-like sets: ratio asymptotics of the associated multiple orthogonal polynomials, arXiv:1612.01149. 

\bibitem{Nik}
E. M. Nikishin, On simultaneous Pad\'{e} approximants, Mat. Sb. 113 (155) (1980), 499-519 (Russian); Math. USSR Sbornik 41 (1982), 409-425.

\bibitem{NikSor}
E. M. Nikishin and V. N. Sorokin, \emph{Rational Approximations and
Orthogonality}, Transl. of Math. Monographs Vol. 92, Amer. Math. Soc.,
Providence, RI, 1991.

\bibitem{Ransford}
T. Ransford, \emph{Potential Theory in the Complex Plane}, London Mathematical
Society Students Texts, vol. 28, Cambridge University Press, 1995.

\bibitem{Romd}
N.B. Romdhane, On the zeros of $d$-symmetric $d$-orthogonal polynomials, J. Math. Anal.
Appl. 344 (2008), 888--897.

\bibitem{SaffTotik}
E. B. Saff and V. Totik, \emph{Logarithmic Potentials with External Fields},
Series of Comprehensive Studies in Mathematics, vol. 316, Springer, New York,
1997.

\bibitem{StahlTotik}
H. Stahl and V. Totik, \emph{General Orthogonal Polynomials}, Enc. Math., vol.
43, Cambridge University Press, Cambridge 1992.




\end{thebibliography}
\end{document}